\numberwithin{equation}{section}
\newtheorem{theorem}{Theorem}[section]
\newtheorem{theoremA}{Theorem}
\newtheorem{proposition}[theorem]{Proposition}
\newtheorem{lemma}[theorem]{Lemma}
\newtheorem{corollary}[theorem]{Corollary}
\theoremstyle{definition}
\newtheorem{remark}[theorem]{Remark}
\def\N{\mathbb N}
\def\R{\mathbb R}
\def\C{\mathbb C}
\title{Weighted norm inequalities for generalized Fourier-type transforms and applications}	
\author{Alberto Debernardi}
\date{}
\begin{document}

	\maketitle

	{\small
	\noindent \textbf{Abstract}: We obtain necessary and sufficient conditions on weights for the generalized Fourier-type transforms to be bounded between weighted $L^p-L^q$ spaces. As an important example, we investigate transforms with kernel of power type, as for instance the sine, Hankel or $\mathscr{H}_\alpha$ transforms. The obtained necessary and sufficient conditions are given in terms of weights, but not in terms of their decreasing rearrangements, as in several previous investigations.\newline \newline
	\textbf{AMS 2010 Primary subject classification}: 42A38. Secondary:  26D15, 44A15.\newline \newline
	\textbf{Keywords}: Weighted norm inequalities, weighted Lebesgue spaces, Fourier-type transforms.\newline \newline
	This research was partially funded by the CERCA Programme of the Generalitat de Catalunya, Centre
	de Recerca Matem\`atica, Fundaci\'o Ferran Sunyer i Balaguer from Institut d'Estudis Catalans, and the MTM2014--59174--P grant.
	}

	\section{Introduction}
	\subsection{Weighted norm inequalities for the Fourier transform}
	Given an integral operator $T$ and $1\leq p,q\leq \infty$, determining necessary and sufficient conditions on pairs of nonnegative locally integrable functions $u,v:\R^n\to \R_+$ (also called \textit{weights}) in order for the inequality
	\begin{equation}\label{eq1}
	\bigg(\int_{\R^n} u(y)|Tf(y)|^q\, dy\bigg)^{1/q}\leq C_{T,n,p,q}\bigg(\int_{\R^n} v(x)|f(x)|^p\, dx\bigg)^{1/p},
	\end{equation}
	to be satisfied for every measurable $f$ (with $C$ independent of $f$) is an important problem in analysis. One of the main examples of such transform $T$ is the Fourier transform
	$$
	\widehat{f}(y)=\int_{\R^n} f(x)e^{ix\cdot y}\, dx,
	$$
for which \eqref{eq1} is rewritten as
	\begin{equation}\label{wnifourier}
\bigg(\int_{\R^n}u(y)|\widehat{f}(y)|^q\, dy\bigg)^{1/q}\leq C_{n,p,q}\bigg(\int_{\R^n} v(x) |f(x)|^p\, dx\bigg)^{1/p}, \quad 1\leq p,q\leq\infty.
	\end{equation}
	
	Important examples of applications of the above inequalities are the study of uncertainty principle relations (cf. \cite{Be}) or restriction inequalities \cite{Fef,tomas,DCGTrestr}. Inequality \eqref{wnifourier} and its variants have been extensively studied, see  \cite{AH,BeAnnals,BHmeasure,BH,JurkatSampson,MuckenhouptWeighted} and the references therein.
	
	The following is well known, and was proved independently by Heinig \cite{heinigWeighted}, Jurkat-Sampson \cite{JurkatSampson} and Muckenhoupt \cite{MuckenhouptNote,MuckenhouptWeighted} in the 1980s (we take $n=1$): \textit{if $u,v$ are even, nonincreasing and nondecreasing as functions of $|x|$ respectively, then inequality \eqref{wnifourier} holds with $1<p\leq q<\infty$ if and only if
	$$
	\sup_{r>0}	\bigg(\int_0^{1/r} u(x)\, dx\bigg)^{1/q}\bigg(\int_0^{r}v(x)^{1-p'}\, dx\bigg)^{1/p'}\leq C.
	$$
}

	Typical examples of weights $u,v$ are power functions. In this case, important examples of \eqref{wnifourier} are the Hausdorff-Young inequality \cite{babenko,BeAnnals}
	$$
	\bigg(\int_{\R^n}|\widehat{f}(x)|^{p'}\, dx\bigg)^{1/p'}\leq C\bigg(\int_{\R^n}|f(x)|^p\, dx\bigg)^{1/p},  \qquad 1\leq p\leq 2,
	$$
	and the Hardy-Littlewood inequality 
	$$
	\bigg(\int_{\R^n}|x|^{p-2}|\widehat{f}(x)|^p\, dx\bigg)^{1/p}\leq C\bigg(\int_{\R^n}|f(x)|^p\, dx\bigg)^{1/p},  \qquad 1<p\leq 2,
	$$
see \cite{SWfourier,titchmarsh}. More generally, if $u(x)=|x|^{-\beta q}$ and $v(x)=|x|^{\gamma p}$, inequality \eqref{wnifourier} is known as the classical Pitt's inequality, and it holds if and only if
	\begin{equation}
	\label{fouriersharprange}
	\beta=\gamma+n\bigg(\frac{1}{q}-\frac{1}{p'}\bigg), \qquad  \max\bigg\{ 0,n\bigg(\frac{1}{q}-\frac{1}{p'}\bigg)\bigg\}\leq \beta<\frac{n}{q}.
		\end{equation}
	
	Another interesting problem is to study whether the sharp range for $\beta$ given above can be extended when considering regularity conditions on $f$ (cf. \cite{GLTBoas,LTparis,SaWh}). For instance, if $f$ is a radial function defined on $\R^n$  (i.e., $f(x)=f_0(|x|)$), inequality \eqref{wnifourier} holds if and only if
	$$
		\beta=\gamma+n\bigg(\frac{1}{q}-\frac{1}{p'}\bigg), \qquad \frac{n}{q}-\frac{n-1}{2}+\max\bigg\{0,\frac{1}{q}-\frac{1}{p'}\bigg\}\leq \beta <\frac{n}{q}.
	$$
	If additionally $f_0$ satisfies general monotonicity conditions (see Section~\ref{sectionGM}), the latter range can be improved to
	$$
	\frac{n}{q}-\frac{n+1}{2}<\beta<\frac{n}{q}.
	$$
	Such monotonicity assumption sometimes allows us to weaken the sufficient conditions the weights $u,v$ should satisfy to guarantee that \eqref{eq1} holds, and in fact it plays a key role in Section~\ref{sectionGM}.
	
	In what follows all integral transforms we consider are one-dimensional and defined on $\R_+$. We will denote, for any function $f$, any weight $v$, and any $0<p<\infty$,
	$$
	\Vert f\Vert_{p}:=\bigg(\int_{0}^{\infty}|f(x)|^p\, dx\bigg)^{1/p}, \qquad \Vert f\Vert_{p,v}:=\Vert v^{1/p} f\Vert_p=\bigg(\int_{0}^{\infty} v(x)|f(x)|^p\, dx\bigg)^{1/p}.
	$$
	
	\subsection{Integral transforms of Fourier type}
	Following \cite{DLT}, for a complex-valued function $f$ we denote 
	\begin{equation}
	\label{transform}
		Ff(y)=\int_0^\infty s(x) f(x)  K(x,y)\, dx, \qquad y>0, 
	\end{equation}
	where $K$ is a continuous kernel and $s$ is a nonnegative nondecreasing function such that 
	\begin{equation}
		\label{doubling}
		s(x)\lesssim s(2x), \qquad x>0.
	\end{equation}
	 Furthermore, we assume that there  exists a nonnegative nondecreasing function $w$ satisfying
	\begin{equation}
	\label{inverse}
	s(x)w(1/x)\asymp 1, \qquad x>0,
	\end{equation}
	and such that the estimate
	\begin{equation}
	\label{kernelest}
	|K(x,y)|\lesssim \min\big\{1, (s(x)w(y))^{-1/2}\big\}, \qquad x,y>0,
	\end{equation}
	holds.  Moreover, we suppose that $f,s\in L^1_{\textrm{loc}}$, and
	\begin{equation}\label{integrabilitycond}
	\int_0^1 s(x)|f(x)|\, dx + \int_1^\infty s(x)^{1/2}|f(x)|\, dx<\infty,
	\end{equation}
	so that $Ff(y)$ is pointwise defined on $(0,\infty)$. Note that in this case the estimate
	\begin{equation}
	\label{pointwiseest}
	|Ff(y)|\lesssim \int_0^{1/y}s(x)|f(x)|\, dx + w(y)^{-1/2}\int_{1/y}^\infty s(x)^{1/2}|f(x)|\, dx
	\end{equation}
	holds. We remark that the weight $s$ could be incorporated into the kernel $K$; however, it is worth considering it separately, as it appears as one of the two factors in the estimate \eqref{kernelest}. Another reason to separate $s$ from $K$ is to stay close to the framework of the so-called  \emph{Fourier-type} transforms, also referred to as $F$-transforms (see \cite{HaTi,titchmarsh,wat} and the recent paper \cite{DLT}), i.e., those satisfying \eqref{doubling}--\eqref{kernelest}, and for which there exists $C>0$ such that if $f\in L^2_s$, (or in other words, $\Vert f\Vert_{2,s}<\infty$), then
\begin{equation}
\label{besselineq}
\Vert Ff\Vert_{2,w}\leq C\Vert f\Vert_{2,s}.
\end{equation}
The latter is known as \emph{weighted Bessel's inequality}. Classical examples of Fourier-type transforms are the sine and cosine transforms, or the Hankel transform, which is introduced in the next subsection.

Weighted norm inequalities of transforms with such kernels have been studied in detail in \cite{DLT}, where the authors obtained  the following sufficient conditions that guarantee inequality \eqref{eq1} for $F$-transforms: let us denote by $u^*$ the decreasing rearrangement of $u$, and by $v_*=[(1/v)^*]^{-1}$. For any $1\leq a\leq \infty$, we denote $a'=a/(a-1)$ the dual exponent of $a$.
\begin{theoremA}\label{thmSUFF-DLT}
	Let $1<p\leq q<\infty$, $1<a\leq 2$, $(p,q,a)\neq (2,2,2)$. Let $u,v$ be weights satisfying
	\begin{align}
	\sup_{r>0}\bigg(\int_0^{1/r}u^*(y)\, dy\bigg)^{1/q}\bigg(\int_0^r v_*(x)^{1-p'}\, dx\bigg)^{1/p'}&\leq C,\label{suffDLTcond1}\\
	\sup_{r>0}\bigg(\int_{1/r}^\infty y^{-q/a'}u^*(y)\, dy\bigg)^{1/q}\bigg(\int_r^\infty x^{-p/a'}v_*(x)^{1-p'}\, dx\bigg)^{1/p'} &\leq C.\label{suffDLTcond2}
	\end{align}
	Then, the following inequality holds:
	\begin{equation}
			\label{eqpittDLT}
		\Vert w^{1/a'}Ff\Vert_{q,u} \lesssim \Vert s^{1/a}f\Vert_{p,v}.
	\end{equation}
	If $(p,q,a)=(2,2,2)$ and $u,v$ are weights satisfying
	$$
	\sup_{r>0}\bigg(\int_0^{1/r}u^*(y)\, dy\bigg)\bigg(\int_0^r v_*(x)^{1-p'}\, dx\bigg)\leq C, 
	$$
	the following inequality holds:
	$$
	\Vert w^{1/2}Ff\Vert_{2,u} \lesssim \Vert s^{1/2}f\Vert_{2,v}.
	$$
\end{theoremA}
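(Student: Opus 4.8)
The plan is to discard all oscillation in $K$ and work solely with the pointwise majorant \eqref{pointwiseest}. Setting $g=s^{1/a}|f|$, so that the right-hand side of \eqref{eqpittDLT} equals $\Vert g\Vert_{p,v}$, I multiply \eqref{pointwiseest} by $w^{1/a'}$ and split
\[
w(y)^{1/a'}|Ff(y)|\lesssim w(y)^{1/a'}\int_0^{1/y}s(x)|f(x)|\,dx+w(y)^{1/a'-1/2}\int_{1/y}^\infty s(x)^{1/2}|f(x)|\,dx=:A(y)+B(y).
\]
By the triangle inequality in $L^q_u$ it suffices to bound $\Vert A\Vert_{q,u}$ and $\Vert B\Vert_{q,u}$ by $\Vert g\Vert_{p,v}$. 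The key simplification is the inverse relation \eqref{inverse}: on the near range $x\le 1/y$ one has $s(x)\lesssim s(1/y)\asymp 1/w(y)$, so that $(w(y)s(x))^{1/a'}\lesssim 1$ and therefore $A(y)\lesssim\int_0^{1/y}s(x)^{1/a}|f(x)|\,dx=\int_0^{1/y}g(x)\,dx$, the weight factors being absorbed entirely. Rewriting $B(y)=\int_{1/y}^\infty(w(y)s(x))^{-(1/a-1/2)}g(x)\,dx$ and again invoking \eqref{inverse}, the tail term becomes a dual Hardy operator acting on $g$, in which the surviving factor $(w(y)s(x))^{-(1/a-1/2)}$ produces, through $s(x)w(1/x)\asymp 1$, the power weights $y^{-q/a'}$ and $x^{-p/a'}$ appearing in \eqref{suffDLTcond2}.

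To deal with arbitrary, non-monotone $u,v$ I would invoke decreasing rearrangements. Since $y\mapsto\int_0^{1/y}g$ is nonincreasing, $A(y)^q$ is nonincreasing, and the Hardy--Littlewood inequality yields $\int_0^\infty u\,A^q\le\int_0^\infty u^*A^q$, replacing $u$ by $u^*$ for free. On the domain side one uses the elementary bound $\int_0^r\psi\le\int_0^r\psi^*$ with $\psi=v^{1-p'}$, together with the identity $(v^{1-p'})^*=v_*^{1-p'}$ coming from $v_*=[(1/v)^*]^{-1}$; this shows that the Muckenhoupt functional built from $v$ is dominated by the one built from $v_*$, so that \eqref{suffDLTcond1} is exactly the hypothesis needed. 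After the substitution $t=1/y$ the estimate $\Vert A\Vert_{q,u^*}\lesssim\Vert g\Vert_{p,v}$ is the classical weighted Hardy inequality for $g\mapsto\int_0^t g$ with output weight $u^*(1/t)/t^2$, whose Muckenhoupt--Bradley characterization (valid since $1<p\le q<\infty$) is precisely \eqref{suffDLTcond1}. The tail term $B$ is handled in the same spirit, the dual operator $\int_t^\infty$ and the change of variables $t=1/y$ converting the Muckenhoupt--Bradley condition into \eqref{suffDLTcond2}.

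The excluded endpoint $(p,q,a)=(2,2,2)$ is genuinely different: here $1/a'-1/2=0$, so the residual factors in $B$ collapse to $1$, and instead of passing through a second Hardy inequality one controls the global part directly by the weighted Bessel inequality \eqref{besselineq}. The $L^2$ orthogonality underlying $F$ lets one dispense with a separate tail condition, which is why only the single condition displayed for this case (the $p=q=2$ form of \eqref{suffDLTcond1}) is required.

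I expect the tail term $B$ to be the main obstacle. In contrast to $A$, its majorant is the product of the nonincreasing factor $w^{1/a'-1/2}$ and the nondecreasing factor $\int_{1/y}^\infty s^{1/2}|f|$, so it is \emph{not} monotone in $y$, and Hardy--Littlewood cannot be applied before the change of variables $t=1/y$, after which $\int_t^\infty(\cdots)$ is monotone. Keeping careful track of the doubling property \eqref{doubling} and the relation \eqref{inverse} throughout --- so that the residual factors collapse to the correct powers and every rearrangement inequality points in the right direction --- is the delicate point; in particular one must verify that the two functionals that emerge are genuinely \eqref{suffDLTcond1}--\eqref{suffDLTcond2}, written in terms of $u^*$ and $v_*$, rather than their non-rearranged counterparts.
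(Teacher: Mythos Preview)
Your plan has a genuine gap: by discarding all oscillation and working only from the pointwise majorant \eqref{pointwiseest}, you throw away exactly the ingredient that makes the conditions \eqref{suffDLTcond1}--\eqref{suffDLTcond2} independent of $s$ and $w$. The claim that the factor $(w(y)s(x))^{-(1/a-1/2)}$ ``produces, through $s(x)w(1/x)\asymp 1$, the power weights $y^{-q/a'}$ and $x^{-p/a'}$'' is unjustified: the inverse relation \eqref{inverse} links $s$ and $w$ at reciprocal arguments but does not force either to be a power function. Carrying your argument through honestly, the dual Hardy condition that emerges from $B$ is
\[
\sup_{r>0}\bigg(\int_{1/r}^\infty u(y)\,w(y)^{q(1/a'-1/2)}\,dy\bigg)^{1/q}\bigg(\int_r^\infty v(x)^{1-p'} s(x)^{p'(1/a'-1/2)}\,dx\bigg)^{1/p'}\le C,
\]
i.e.\ exactly \eqref{sufficiencycond2} of Theorem~\ref{suffFtransforms}, not \eqref{suffDLTcond2}. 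The paper makes precisely this distinction: the Hardy-only route you describe yields Theorem~\ref{suffFtransforms}, whose hypotheses depend on $s$ and $w$; Theorem~\ref{thmSUFF-DLT} requires more.

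What the actual proof (in \cite{DLT}, as summarized after the statement) uses is Calder\'on's rearrangement inequality \eqref{calderonineq}. For a Fourier-type transform the operator $Tf=w^{1/a'}F(s^{-1/a}f)$ is of type $(1,\infty)$ \emph{and} of type $(a,a')$ --- the latter coming from Bessel's inequality \eqref{besselineq}, which is precisely the oscillatory input you discarded. Calder\'on then gives a pointwise bound on $(Tf)^*(y)$ in terms of $f^*$ with pure powers $y^{-1/a'}$, $x^{-1/a'}$; this is how $s$ and $w$ disappear from the hypotheses. Your scheme never invokes the $(a,a')$ bound, so it cannot reach \eqref{suffDLTcond2}. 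Incidentally, your explanation of the case $(p,q,a)=(2,2,2)$ is also off: the single condition there is not obtained by replacing a Hardy step with Bessel, but because in that regime \eqref{suffDLTcond1} already implies \eqref{suffDLTcond2} (see the discussion following Theorem~\ref{suffFtransforms}).
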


In this paper we deal with transforms of the form \eqref{transform} for which estimate \eqref{kernelest} holds. These are more general than Fourier-type transforms, since  conditions \eqref{doubling}--\eqref{integrabilitycond} do not imply \eqref{besselineq} in general.

\subsection{Integral transforms with power-type kernel}

We also define the transforms with \textit{kernels of power type} (or power-type kernels) as those of the form
	\begin{equation}
	\label{powertrans}
	Ff(y)=y^{c_0}\int_0^\infty x^{b_0} f(x) K(x,y)\, dx, 
	\end{equation}
	where 
	\begin{equation}
	\label{kernelestcor}
	|K(x,y)|\lesssim \min \{x^{b_1}y^{c_1}, x^{b_2}y^{c_2}\},
	\end{equation}
	with $b_j,c_j \in \R$ for $0\leq j\leq 2$. It is clear that every transform of the form \eqref{transform} satisfying the estimate \eqref{kernelest} with $s(x)=x^{\delta}$, $\delta\in \R$, is a  transform with power-type kernel, but the converse is not true.
	
	 Here, in order for $Ff(y)$	to be well defined we assume 
	$$
	\int_0^1 x^{b_0+b_1}|f(x)|\, dx+\int_1^\infty x^{b_0+b_2}|f(x)|\, dx<\infty.
	$$
	
	Note that in general the kernels of the type $K(x,y)\asymp \min\{x^{b_1}y^{c_1}, x^{b_2}y^{c_2}\}$ differ from the kernels satisfying \textit{Oinarov's condition} \cite{oinarov}, i.e., for some $d>0$,
	\begin{equation}
	\label{oinarovcond}
	d^{-1}(K(t,u)+K(u,v))\leq K(t,v)\leq d(K(t,u)+K(u,v)), \qquad 0<v\leq u\leq t<\infty.
	\end{equation}
	In the case $b_j=c_j=0$, $j=1,2$, it is clear that $K(x,y)\asymp 1$ implies \eqref{oinarovcond}. However, this case is of no interest for us, as our main result for transforms with power-type kernel is not applicable (see Corollary~\ref{corapplications} below). However, if 
	$$
	K(x,y)\asymp \begin{cases} 1, &\text{if }xy\leq 1,\\
	(xy)^{-\delta},& \text{if }xy>1,
	\end{cases}
	$$
	with $\delta>0$, and for $N$ big enough we set $t=N^{\alpha}$, $u=N^{\beta}$, $v=N^{-(\alpha+\beta)/2}$, with $\alpha>\beta>0$, then \eqref{oinarovcond} reads as
	$$
	1\lesssim N^{\delta\frac{\alpha-\beta}{2}}\lesssim 1,
	$$
	which is clearly not true.

\subsection{The Hankel and $\mathscr{H}_\alpha$ transforms}
Two important examples of transforms illustrating the operators from the previous subsection are the Hankel and the $\mathscr{H}_\alpha$ transforms. The former is defined as (cf. \cite{titchmarsh})
\begin{equation}
\label{defHankel}
H_\alpha f(y)=\int_0^\infty x^{2\alpha+1} f(x) j_\alpha(xy)\, dx,
\end{equation}
where $j_\alpha$ is the normalized Bessel function of order $\alpha$, given by the series
 \begin{equation}
 \label{eqSeriesBessel}
 j_\alpha(x)=\Gamma(\alpha+1) \sum_{k=0}^\infty \frac{(-1)^k(x/2)^{2k}}{k!\Gamma(\alpha+k+1)}.
 \end{equation} 
We also mention the identity $j_\alpha(x)=\Gamma(\alpha+1)(x/2)^{-\alpha}J_\alpha(x)$, where $J_\alpha$ is the Bessel function of the first kind of order $\alpha$. The function $j_\alpha(xy)$ satisfies the estimate
 \begin{equation}
 \label{besselest}
 |j_\alpha(xy)|\lesssim \min\big\{1, (xy)^{-\alpha-1/2}\big\}.
 \end{equation}

It holds that $H_\alpha$ is a Fourier-type transform, but also a  transform with power-type kernel. The Hankel transform of order $\alpha=n/2-1$ arises as the Fourier transform of radial functions in $\R^n$, see \cite{SWfourier} (in fact, the cosine transform is nothing more than the Hankel transform of order $\alpha=-1/2$).

In relation with the Hankel transform, we have the $\mathscr{H}_\alpha$ transform, defined as
\begin{equation}
\mathscr{H}_\alpha(y)=\int_0^\infty (xy)^{1/2} f(x)\mathbf{H}_\alpha(xy)\, dx, \qquad \alpha>-1/2 \label{defHtrans},
\end{equation}
see \cite{RonCan,titchmarsh}. Here $\mathbf{H}_\alpha$ is the Struve function of order $\alpha$ \cite{EMOT,watBesselfn}, given by the series
\begin{equation}\label{Hseries}
\mathbf{H}_\alpha(x)=\bigg(\frac{x}{2}\bigg)^{\alpha+1}\sum_{k=0}^\infty \frac{(-1)^k(x/2)^{2k}}{\Gamma(k+3/2)\Gamma(k+\alpha+3/2)}.
\end{equation}
The function $\mathbf{H}_\alpha$ is continuous and satisfies the estimate 
 \begin{equation}
 \label{Hest}
 |\mathbf{H}_\alpha(x)|
 \lesssim \begin{cases}\min\{x^{\alpha+1},x^{-1/2}\}, &\alpha<1/2,\\
 \min\{x^{\alpha+1},x^{\alpha-1}\}, &\alpha\geq 1/2.
 \end{cases}
 \end{equation}
Moreover,  $\mathbf{H}_\alpha$ is related to the Bessel function of the first kind $J_\alpha$ in the following way: $\mathbf{H}_\alpha$ is the solution of the non-homogeneous Bessel differential equation
\begin{equation}
\label{EDO}
x^2\frac{d^2f}{dx^2}+x\frac{df}{dx}+(x^2-\alpha^2)f=\frac{4(x/2)^{\alpha+1}}{\sqrt{\pi}\Gamma(\alpha+1/2)},
\end{equation}
whilst $J_\alpha$ is the solution of the homogeneous differential equation corresponding to \eqref{EDO} that is bounded at the origin for nonnegative $\alpha$.  

The operator $\mathscr{H}_\alpha$ corresponds to a transform with power-type kernel, but if we write it in the form \eqref{transform}, condition \eqref{kernelest} does not hold in general.

We remark that the $\mathscr{H}_\alpha$ transform can be defined for a wider range of $\alpha$ than $\alpha>-1/2$ (see \cite{RonCan} for more details), but for our purpose we need to restrict ourselves to the indicated range.

 Further basic properties of the kernels $j_\alpha$ and $\mathbf{H}_\alpha$ are discussed in Subsection~\ref{ss2.1}.

\subsection{Main results and outline}\label{ssoutline}
The aim of this work is to give simple necessary and sufficient conditions on two different kinds of integral transforms for the weighted norm inequality \eqref{eqpittDLT} to hold. In more detail, we deal with transforms of the form \eqref{transform} whose kernel satisfies \eqref{kernelest} (which generalize the Fourier-type transforms), and those of the form \eqref{powertrans} with power-type kernel. We emphasize that if $F$ as defined in \eqref{transform} is such that $s(x)=x^\delta$ with $\delta\in \R$, and satisfies \eqref{kernelest}, then $F$ has a power-type kernel. Our main tool is Hardy's inequality; this allows us to obtain those conditions written in terms of integrals of the weights $u,v$ instead of their decreasing rearrangements, as in many previous articles within the scope of this topic.
 
In Section~\ref{s2} we list some properties of the normalized Bessel function and the Struve function (the kernels of the Hankel and $\mathscr{H}_\alpha$ transforms, respectively). We also prove an auxiliary lemma related to the antiderivative of the Struve function.

In Section \ref{S3} we study the transforms \eqref{transform} for which the estimate \eqref{kernelest} holds. Note that in contrast with the $F$-type transforms, we do not require that properties \eqref{doubling}, \eqref{inverse} nor Bessel's inequality \eqref{besselineq} hold; as already mentioned, such transforms are  more general than $F$-transforms. The statement yielding sufficient conditions for \eqref{eqpittDLT} to hold reads as follows:
\begin{theorem}\label{suffFtransforms}
	Let $1<p\leq q<\infty$, $1\leq a\leq \infty$, and $u,v$ be nonnegative. Assume there exists $C>0$ such that
	\begin{align}
	\label{sufficiencycond1}
	\sup_{r>0}\bigg(\int_0^{1/r} u(y)w(y)^{q/a'}\, dy\bigg)^{1/q}\bigg( \int_0^r v(x)^{1-p'} s(x)^{p'/a'}\, dx\bigg)^{1/p'}&\leq C,\\
	\sup_{r>0}\bigg(\int_{1/r}^\infty u(y) w(y)^{q(1/a'-1/2)}\, dy\bigg)^{1/q}\bigg(\int_r^{\infty}v(x)^{1-p'} s(x)^{p'(1/a'-1/2)}\, dx\bigg)^{1/p'}&\leq C.\label{sufficiencycond2}
	\end{align}
	Then the weighted norm inequality
	\begin{equation}
	\label{wni}
		\Vert w^{1/a'}Ff\Vert_{q,u} \lesssim \Vert s^{1/a}f\Vert_{p,v}
	\end{equation}  holds for every measurable $f$.
\end{theorem}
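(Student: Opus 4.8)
The plan is to reduce the weighted inequality \eqref{wni} to two one–dimensional weighted Hardy inequalities via the pointwise bound \eqref{pointwiseest}. Observe first that \eqref{pointwiseest} is available under the present hypotheses without recourse to \eqref{inverse}: estimating \(|K(x,y)|\) by \eqref{kernelest} and using the two elementary bounds \(\min\{1,(s(x)w(y))^{-1/2}\}\le 1\) on \(x\le 1/y\) and \(\min\{1,(s(x)w(y))^{-1/2}\}\le (s(x)w(y))^{-1/2}\) on \(x>1/y\) yields \eqref{pointwiseest} directly, since both bounds are valid everywhere and the splitting point \(1/y\) is merely a convenient choice. Multiplying \eqref{pointwiseest} by \(w(y)^{1/a'}\) and using the triangle inequality in \(L^q_u\) (legitimate as \(q>1\)), it then suffices to bound separately the two terms
\[
A(y)=w(y)^{1/a'}\int_0^{1/y}s(x)|f(x)|\,dx,\qquad B(y)=w(y)^{1/a'-1/2}\int_{1/y}^\infty s(x)^{1/2}|f(x)|\,dx,
\]
each by a constant times \(\Vert s^{1/a}f\Vert_{p,v}\).

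For the term \(A\) I would substitute \(t=1/y\) to convert the inner integral \(\int_0^{1/y}\) into the genuine Hardy average \(\int_0^t\). Writing \(G(x)=s(x)|f(x)|\), the desired bound \(\Vert A\Vert_{q,u}\lesssim\Vert s^{1/a}f\Vert_{p,v}\) becomes the standard weighted Hardy inequality \(\big(\int_0^\infty U(t)(\int_0^t G)^q\,dt\big)^{1/q}\lesssim\big(\int_0^\infty V(x)G(x)^p\,dx\big)^{1/p}\), with \(U(t)=u(1/t)w(1/t)^{q/a'}t^{-2}\) and \(V(x)=v(x)s(x)^{-p/a'}\); the exponent \(-p/a'\) appears because \(s^{1/a}|f|=s^{-1/a'}G\). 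By the classical Muckenhoupt--Bradley characterization, valid precisely for \(1<p\le q<\infty\), this holds iff \(\sup_{r>0}(\int_r^\infty U)^{1/q}(\int_0^r V^{1-p'})^{1/p'}<\infty\). Undoing the substitution (the Jacobian \(t^{-2}\) cancels against \(dt=-y^{-2}\,dy\)) turns \(\int_r^\infty U\) into \(\int_0^{1/r}u(y)w(y)^{q/a'}\,dy\), while the identity \(-p(1-p')=p'\) turns \(V^{1-p'}\) into \(v^{1-p'}s^{p'/a'}\); the resulting condition is exactly \eqref{sufficiencycond1}.

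The term \(B\) is treated identically, the only difference being that after the substitution \(t=1/y\) the inner integral \(\int_{1/y}^\infty\) becomes the conjugate Hardy integral \(\int_t^\infty\), so one invokes the dual Muckenhoupt--Bradley condition \(\sup_{r>0}(\int_0^r U)^{1/q}(\int_r^\infty V^{1-p'})^{1/p'}<\infty\). Here \(\widetilde G(x)=s(x)^{1/2}|f(x)|\), the outer weight carries \(w(1/t)^{q(1/a'-1/2)}\), and the relation \(s^{1/a}|f|=s^{1/a-1/2}\widetilde G\) forces \(\widetilde V(x)=v(x)s(x)^{p(1/a-1/2)}\). Using the algebraic identity \(p(1/a-1/2)(1-p')=p'(1/a'-1/2)\) one verifies that the first factor reproduces \(\int_{1/r}^\infty u(y)w(y)^{q(1/a'-1/2)}\,dy\) and the second reproduces \(\int_r^\infty v^{1-p'}s^{p'(1/a'-1/2)}\), so that the dual condition is precisely \eqref{sufficiencycond2}.

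The computation is routine once this framework is in place; the only point demanding genuine care is the exponent bookkeeping under the reflection \(y\mapsto 1/y\) — namely checking that the Jacobian and the powers of \(s\) and \(w\) recombine so that the two Muckenhoupt--Bradley conditions coincide \emph{exactly} with \eqref{sufficiencycond1}--\eqref{sufficiencycond2}, rather than with some variant equivalent only up to constants. I would also make explicit that the hypothesis \(1<p\le q<\infty\) is exactly what licenses the weighted Hardy inequality in the form used; the endpoint cases and the range \(q<p\) would require a different and more involved criterion and fall outside the scope of this statement.
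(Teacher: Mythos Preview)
Your proposal is correct and follows essentially the same route as the paper: apply the pointwise bound \eqref{pointwiseest}, change variables $y\mapsto 1/y$, and reduce each of the two resulting terms to a weighted Hardy (Muckenhoupt--Bradley) inequality with $g=s|f|$ and $g=s^{1/2}|f|$ respectively, so that the corresponding Hardy conditions become exactly \eqref{sufficiencycond1} and \eqref{sufficiencycond2}. Your explicit verification of the exponent bookkeeping and the remark that \eqref{pointwiseest} follows from \eqref{kernelest} alone (without \eqref{inverse}) are welcome clarifications but do not alter the argument.
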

\begin{remark}
	Theorem~\ref{suffFtransforms} (and every other assertion in the sequel) is stated for all measurable $f$, although the interesting case is when $\Vert s^{1/a}f\Vert_{p,v}<\infty$. However,  if such a norm is not finite, inequality \eqref{wni} trivially holds, which makes the assertion true for any measurable $f$ (and the same applies for further results).
\end{remark}
	Note that the sufficient conditions of Theorem~\ref{suffFtransforms} depend both on the parameter $a$ and on the weights $s$ and $w$. However, conditions \eqref{suffDLTcond1} and \eqref{suffDLTcond2} from Theorem~\ref{thmSUFF-DLT} do not depend on the weights $s,w$, but only on the parameter $a$. In order to prove Theorem~\ref{thmSUFF-DLT}, in \cite{DLT} the authors make use of Calder\'on's inequality (see also \cite{calderon})
		\begin{equation}
		\label{calderonineq}
		(Tf)^*(y)\lesssim \int_0^{1/y} f^*(x)\, dx+y^{-1/a'}\int_{1/y}^\infty x^{-1/a'}f^*(x)\, dx, \qquad 1<a\leq 2,
		\end{equation}
	applicable to transforms $T$ of type $(1,\infty)$ and $(a,a')$ for all $1<a\leq 2$. If $F$ is a Fourier-type transform, then $Tf=w^{1/a'}F(s^{-1/a}f)$ is of type $(1,\infty)$ and $(a,a')$ for $1<a\leq 2$ (cf. \cite[Lemma 2.1]{DLT}). Thus, the weights $s,w$ appear inside the norm \eqref{eqpittDLT}, but not in conditions \eqref{suffDLTcond1} and \eqref{suffDLTcond2}, so that the appearance of $a$ is essential in the approach of \cite{DLT}.
	
	Note that we can consider the weights $\overline{u}=w^{q/a'}u$ and $\overline{v}=s^{p/a}v$ in place of $u,v$ respectively, so that the parameter $a$ can be omitted. However, we prefer to keep it in the formulation of our results, so that we stay close to the framework of \cite{DLT}. 
	
	Using a so-called ``gluing lemma'' (see \cite{GKP}), it is possible to write conditions \eqref{sufficiencycond1} and \eqref{sufficiencycond2} as only one that is equivalent to the simultaneous fulfilment of these. However, this is only applicable to transforms whose weights satisfy \eqref{inverse}, such as Fourier-type transforms. In order to apply the mentioned gluing lemma, we also need to restrict ourselves to the case $a=1$.
	\begin{corollary}\label{CORgluing}
		Let $1<p\leq q<\infty$ and $u,v$ be nonnegative. Assume that \eqref{inverse} holds and  that there exists $C>0$ such that
		\begin{align}
		\sup_{r>0}\bigg[ \bigg( \int_0^r v(x)^{1-p'}\, dx +& s(r)^{p'/2}\int_r^\infty v(x)^{1-p'}s(x)^{-p'/2}\, dx\bigg)^{1/p'}\nonumber \\
		&\times \bigg( w(1/r)^{q/2}\int_{1/r}^\infty u(y)w(y)^{-q/2}\, dy + \int_0^{1/r} u(y)\, dy  \bigg)^{1/q}\bigg]\leq C, \label{EQglued}
		\end{align}
		Then the weighted norm inequality $\Vert Ff\Vert_{q,u} \lesssim \Vert sf\Vert_{p,v}$ holds for every measurable $f$.
	\end{corollary}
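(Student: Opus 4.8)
The plan is to deduce Corollary~\ref{CORgluing} from Theorem~\ref{suffFtransforms} specialized to $a=1$. For this value $1/a'=0$, so the conclusion \eqref{wni} becomes exactly the asserted inequality $\Vert Ff\Vert_{q,u}\lesssim\Vert sf\Vert_{p,v}$, while the sufficient conditions \eqref{sufficiencycond1} and \eqref{sufficiencycond2} reduce to the finiteness of
\[
A_0:=\sup_{r>0}\Big(\int_0^{1/r}u\Big)^{1/q}\Big(\int_0^r v^{1-p'}\Big)^{1/p'} \quad\text{and}\quad A_\infty:=\sup_{r>0}\Big(\int_{1/r}^\infty uw^{-q/2}\Big)^{1/q}\Big(\int_r^\infty v^{1-p'}s^{-p'/2}\Big)^{1/p'},
\]
respectively. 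Hence it suffices to show that the single hypothesis \eqref{EQglued}, namely that the supremum $A$ appearing there is at most $C$, forces $A_0\lesssim C$ and $A_\infty\lesssim C$; the corollary then follows immediately from Theorem~\ref{suffFtransforms}.

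To carry out this reduction I would exploit the additive structure of \eqref{EQglued}. It is the supremum over $r>0$ of a product $P(r)^{1/p'}Q(r)^{1/q}$ in which
\[
P(r)=\int_0^r v^{1-p'}+s(r)^{p'/2}\int_r^\infty v^{1-p'}s^{-p'/2}, \qquad Q(r)=w(1/r)^{q/2}\int_{1/r}^\infty uw^{-q/2}+\int_0^{1/r}u,
\]
and each of $P$ and $Q$ is a sum of two nonnegative terms. Keeping only the unweighted term in each factor, so that $P(r)\ge\int_0^r v^{1-p'}$ and $Q(r)\ge\int_0^{1/r}u$, gives at once $A\ge A_0$. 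Keeping instead the weighted term in each factor and pulling out the prefactors $s(r)^{1/2}$ and $w(1/r)^{1/2}$ yields
\[
A\ge\sup_{r>0}\big(s(r)w(1/r)\big)^{1/2}\Big(\int_r^\infty v^{1-p'}s^{-p'/2}\Big)^{1/p'}\Big(\int_{1/r}^\infty uw^{-q/2}\Big)^{1/q},
\]
and here the hypothesis \eqref{inverse} turns the prefactor $(s(r)w(1/r))^{1/2}$ into a quantity bounded below by a positive constant, so that $A\gtrsim A_\infty$. Together these two bounds give $A_0,A_\infty\lesssim A\le C$, as required.

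The step carrying the actual content is the second one: it is precisely the choice $a=1$ that makes the weight exponents in \eqref{sufficiencycond2} equal to $-p'/2$ on $s$ and $-q/2$ on $w$, so that the matching prefactors produced by \eqref{EQglued} combine into $(s(r)w(1/r))^{1/2}$ and are absorbed by the inverse relation \eqref{inverse}; for a general $a$ no such cancellation occurs, which is why the corollary is restricted to $a=1$. I would stress that the above establishes only the direction $A_0+A_\infty\lesssim A$, which is all that sufficiency demands. The reverse estimate $A\lesssim A_0+A_\infty$, which makes \eqref{EQglued} genuinely equivalent to the pair \eqref{sufficiencycond1}--\eqref{sufficiencycond2} and constitutes the real content of the gluing lemma of \cite{GKP}, would in addition call upon the monotonicity of the weights $s$ and $w$ to control the two mixed cross terms in the expansion of $P(r)Q(r)$, but it plays no role in the proof of the corollary.
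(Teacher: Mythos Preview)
Your proof is correct. You specialize Theorem~\ref{suffFtransforms} to $a=1$, observe that \eqref{EQglued} dominates each of the four cross products obtained by expanding $P(r)^{1/p'}Q(r)^{1/q}$, and keep only the two diagonal ones: the pair of unweighted terms gives $A_0$ directly, while the pair of weighted terms gives $(s(r)w(1/r))^{1/2}$ times the supremand of $A_\infty$, and \eqref{inverse} disposes of the prefactor. This is exactly the easy half of the gluing lemma, and it is all that the corollary requires.

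The paper proceeds differently: it first states and proves in full a generalized gluing lemma (Lemma~\ref{lemmaglue}), establishing that \eqref{EQglued} is \emph{equivalent} to the pair \eqref{sufficiencycond1}--\eqref{sufficiencycond2}, and then invokes that equivalence. The nontrivial direction of the lemma---showing that $A_0,A_\infty<\infty$ implies the boundedness of the two mixed cross terms---uses a bisection argument together with the monotonicity of $s$ and $w$, exactly as you anticipate in your final paragraph. Your approach is more economical for the corollary as stated, since sufficiency only needs the trivial direction; the paper's approach has the advantage of showing that nothing is lost in passing from two conditions to one, which is the point emphasized in the discussion preceding the corollary.
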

	
Although Theorem~\ref{suffFtransforms} can be applied to a larger number of operators than just the Fourier-type transforms (see Theorem~\ref{thmSUFF-DLT}), we see that it has some limitations. For instance, if $s(x)\asymp w(x)\asymp 1$, it readily follows that we can get no sufficient conditions whenever $u,v$ are power weights, since \eqref{sufficiencycond1} and \eqref{sufficiencycond2} cannot hold simultaneously. This already excludes the classical Fourier transform or the cosine transform from the scope of Theorem~\ref{suffFtransforms}.

Also note that whenever $s$ and $w$ are increasing, \eqref{suffDLTcond1} always implies \eqref{sufficiencycond1}, by Hardy-Littlewood rearrangement inequality (cf. \cite[Ch. II]{BSbook}), which for our purpose can be stated as $
\int_0^{t} u(x)\, dx\leq \int_0^{t} u^*(x)\, dx$ for all $t>0$ and measurable $u$. Moreover, in \cite{DLT} the authors prove that condition \eqref{suffDLTcond2} is redundant in the cases $a'<\max\{q,p'\}$ or $a=p=q=2$, by showing that in these cases \eqref{suffDLTcond1} implies \eqref{suffDLTcond2}.

Theorem~\ref{suffFtransforms} is sharp in general, as shown by considering any transform with kernel satisfying $K(x,y)\asymp\min\big\{1,(s(x)w(y))^{-1/2}\big\}$ (Theorem~\ref{thmnecessity1}). In this case, we can write the following:
\begin{corollary}\label{coriff}
	Let the kernel $K$ from \eqref{transform} satisfy $K(x,y)\asymp\min\big\{1,(s(x)w(y))^{-1/2}\big\}$. Then the inequality $\Vert w^{1/a'}Ff\Vert_{q,u} \lesssim \Vert s^{1/a}f\Vert_{p,v}$ holds for every measurable $f$ if and only if \eqref{sufficiencycond1} and \eqref{sufficiencycond2} are satisfied.
\end{corollary}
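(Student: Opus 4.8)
The forward (``if'') implication is immediate from Theorem~\ref{suffFtransforms}: the two-sided bound $K(x,y)\asymp\min\{1,(s(x)w(y))^{-1/2}\}$ entails in particular the upper estimate \eqref{kernelest}, so under \eqref{sufficiencycond1}--\eqref{sufficiencycond2} the inequality \eqref{wni} holds. The whole content of the corollary is therefore the converse: assuming $\Vert w^{1/a'}Ff\Vert_{q,u}\lesssim\Vert s^{1/a}f\Vert_{p,v}$ for every measurable $f$, I would deduce \eqref{sufficiencycond1} and \eqref{sufficiencycond2} by testing against suitable \emph{nonnegative} functions. The only ingredient used beyond the sufficiency theorem is the \emph{lower} bound $K(x,y)\gtrsim\min\{1,(s(x)w(y))^{-1/2}\}$ supplied by $\asymp$, so that for $f\ge 0$ the transform $Ff$ is controlled from below by the corresponding integral of the kernel.

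To obtain \eqref{sufficiencycond1}, fix $r>0$ and restrict to $f\ge 0$ supported on $(0,r)$. For $x\in(0,r)$ and $y\in(0,1/r)$, monotonicity of $s,w$ together with \eqref{inverse} give $s(x)w(y)\le s(r)w(1/r)\asymp 1$, whence $K(x,y)\gtrsim 1$ on this rectangle and thus $Ff(y)\gtrsim\int_0^r s(x)f(x)\,dx$ for every $y\in(0,1/r)$. Bounding the left-hand norm from below by integrating only over $(0,1/r)$ yields
\[
\Vert w^{1/a'}Ff\Vert_{q,u}\gtrsim\Big(\int_0^r s(x)f(x)\,dx\Big)\Big(\int_0^{1/r}u(y)w(y)^{q/a'}\,dy\Big)^{1/q}.
\]
It then remains to maximize $\int_0^r sf$ subject to $\int_0^r v\,s^{p/a}f^p$ fixed; the substitution $g=s^{1/a}v^{1/p}f$ turns this into the Hölder/duality computation $\sup_g \big(\int_0^r s^{1/a'}v^{-1/p}g\big)/\Vert g\Vert_p=\big(\int_0^r s^{p'/a'}v^{1-p'}\big)^{1/p'}$, and inserting the extremizer back produces precisely \eqref{sufficiencycond1} upon taking the supremum in $r$.

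The argument for \eqref{sufficiencycond2} is parallel, now testing with $f\ge 0$ supported on $(r,\infty)$. For $x>r$ and $y>1/r$ one has $s(x)w(y)\ge s(r)w(1/r)\asymp 1$, so $K(x,y)\gtrsim(s(x)w(y))^{-1/2}$ and hence $Ff(y)\gtrsim w(y)^{-1/2}\int_r^\infty s(x)^{1/2}f(x)\,dx$ for $y>1/r$. Restricting the left norm to $(1/r,\infty)$ brings out the factor $\int_{1/r}^\infty u(y)w(y)^{q(1/a'-1/2)}\,dy$, while the same duality (with $s^{1/2}$ in place of $s$) gives $\sup\big(\int_r^\infty s^{1/2}f\big)/\big(\int_r^\infty v\,s^{p/a}f^p\big)^{1/p}=\big(\int_r^\infty s^{p'(1/a'-1/2)}v^{1-p'}\big)^{1/p'}$; combining and taking the supremum in $r$ yields \eqref{sufficiencycond2}.

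The step I expect to be the main obstacle is the identification of the kernel regimes on the two test regions: converting the geometric split $x\lessgtr r,\ y\lessgtr 1/r$ into the analytic split $s(x)w(y)\lessgtr 1$ is exactly what forces the use of \eqref{inverse} together with the monotonicity of $s$ and $w$, and it also requires that $\asymp$ be a genuine two-sided bound on $K$ itself (i.e.\ effectively a nonnegative, non-oscillating kernel)---which is why this sharpness statement does not apply to genuinely oscillatory kernels such as $j_\alpha$. The remaining points are routine: the extremizing $f$ above may fail to have finite norm, which one handles in the standard way by truncating $f$ to sets where the relevant integrals are finite and passing to the limit by monotone convergence, so that the conclusion remains valid (trivially) even when the integrals in \eqref{sufficiencycond1}--\eqref{sufficiencycond2} diverge.
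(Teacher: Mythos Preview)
Your proposal is correct and follows essentially the same route as the paper: sufficiency is Theorem~\ref{suffFtransforms}, and necessity is exactly Theorem~\ref{thmnecessity1}, whose proof simply plugs in the explicit test functions $f_r=v^{1-p'}s^{p'/a'-1}\chi_{(0,r)}$ and $f_r=v^{1-p'}s^{p'(1/a'-1/2)-1/2}\chi_{(r,\infty)}$, which are precisely the H\"older/duality extremizers your argument produces. The identification of kernel regimes via monotonicity and \eqref{inverse} that you flag as the main obstacle is indeed what is needed (and is used implicitly by the paper) to match the hypothesis $K\asymp\min\{1,(s(x)w(y))^{-1/2}\}$ to the separate assumptions $K\asymp 1$ on $xy\le 1$ and $K\asymp (s(x)w(y))^{-1/2}$ on $xy>1$ of Theorem~\ref{thmnecessity1}.
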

As an example of a kernel satisfying the hypotheses of the latter,  consider 
$$
K(x,y)=\begin{cases}
1,& \text{if }xy\leq 1,\\
(s(x)w(y))^{-1/2},&\text{if }xy>1,
\end{cases}
$$
so that
$$
Ff(y)=\int_0^{1/y}s(x)f(x)\, dx +w(y)^{-1/2}\int_{1/y}^\infty s(x)^{1/2}f(x)\, dx.
$$
Furthermore, note that it follows from Theorem~\ref{thmnecessity1} that condition \eqref{sufficiencycond1} is necessary for \eqref{eqpittDLT} to hold (with $a=1$) in the case of the Laplace transform (for which $K(x,y)=e^{-xy}$ and $s\equiv w\equiv 1$), as shown by Bloom in \cite{BloomLaplace}.

Sections~\ref{S4}--\ref{sectionGM} are devoted to the study of weighted norm inequalities for transforms with power-type kernel. We start by considering transforms of the form \eqref{transform} assuming that $s(x)=x^{\delta}$ with $\delta>0$, and taking weights $u,v$ that are piecewise power functions, that is, if for any real numbers 
$\alpha_1,\alpha_2$ we denote $\overline{\alpha}=(\alpha_1,\alpha_2)$, $\overline{\alpha}'=(\alpha_2,\alpha_1)$ and
$$
x^{\overline{\alpha}}:=\begin{cases}
x^{\alpha_1}, &x\leq 1, \\
x^{\alpha_2}, &x>1,
\end{cases} 
$$
then our weights have the form
\begin{equation}
\label{powerweights}
u(x)=x^{-\overline{\beta}'q},\qquad 
v(x)=x^{\overline{\gamma}p},
\end{equation}
with $\beta_i,\gamma_i \in \R$, $i=1,2$. First, we rewrite Theorem~\ref{suffFtransforms} for power weights as follows:
\begin{theorem}\label{theoremsuffpowerweights}
	Let $\beta_1-\gamma_1=\beta_2-\gamma_2$. Let $F$ be of the form \eqref{powertrans} with kernel satisfying \eqref{kernelestcor}. Assume that $u,v$ are of the form \eqref{powerweights}, and that $s(x)=x^\delta$ with $\delta> 0$. If
	\begin{equation}\label{sufpowers}
	\beta_i=\gamma_i+\frac{1}{q}-\frac{1}{p'}, \qquad i=1,2,
	\end{equation}
	with
	\begin{equation}
	\label{sufpowers2}
	\frac{1}{q}-\frac{\delta}{2}<\beta_i<\frac{1}{q}, \qquad i=1,2,
	\end{equation}
	then the inequality 
	\begin{equation}
	\label{pittpower}
	\big\Vert x^{-\overline{\beta}'} Ff\big\Vert_{q}\leq C\big\Vert x^{\overline{\gamma}+\delta}f\big\Vert_p
	\end{equation}
	holds for any measurable $f$.
\end{theorem}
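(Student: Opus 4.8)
The plan is to obtain Theorem~\ref{theoremsuffpowerweights} as the specialization of Theorem~\ref{suffFtransforms} to the endpoint $a=1$. With $a=1$ one has $1/a'=0$, so the factor $w^{1/a'}$ on the left of \eqref{wni} disappears, while $s^{1/a}=s(x)=x^\delta$ merges with $v$ on the right; explicitly, for $u(y)=y^{-\overline{\beta}'q}$ and $v(x)=x^{\overline{\gamma}p}$ we get $\int u|Ff|^q=\Vert x^{-\overline{\beta}'}Ff\Vert_q^q$ and $\int v\,s^p|f|^p=\Vert x^{\overline{\gamma}+\delta}f\Vert_p^p$, so \eqref{wni} becomes precisely \eqref{pittpower}. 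The transform at hand is of the form \eqref{transform} with $s(x)=x^\delta$ and kernel obeying \eqref{kernelest}; matching exponents against $\min\{1,(x^\delta w(y))^{-1/2}\}$ (as in the Hankel model, and as forced by \eqref{inverse}) identifies the companion weight as $w(y)\asymp y^\delta$. It therefore suffices to verify conditions \eqref{sufficiencycond1} and \eqref{sufficiencycond2} at $a=1$, with $s(x)=x^\delta$ and $w(y)=y^\delta$, under the hypotheses \eqref{sufpowers}--\eqref{sufpowers2}. Note that \eqref{sufpowers} already forces $\beta_1-\gamma_1=\beta_2-\gamma_2=\tfrac1q-\tfrac1{p'}$, so the standing balance assumption is automatic.

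The next step is to reduce both conditions to integrals of pure powers. Using $p(1-p')=-p'$, the dual weight is $v(x)^{1-p'}=x^{-\overline{\gamma}p'}$, read piecewise; and at $a=1$ the $w,s$ factors become $w(y)^{q/a'}=s(x)^{p'/a'}=1$ in \eqref{sufficiencycond1}, and $w(y)^{-q/2}=y^{-\delta q/2}$, $s(x)^{-p'/2}=x^{-\delta p'/2}$ in \eqref{sufficiencycond2}. Thus \eqref{sufficiencycond1} reads
\[
\sup_{r>0}\bigg(\int_0^{1/r}y^{-\overline{\beta}'q}\,dy\bigg)^{1/q}\bigg(\int_0^r x^{-\overline{\gamma}p'}\,dx\bigg)^{1/p'}\leq C,
\]
while \eqref{sufficiencycond2} reads
\[
\sup_{r>0}\bigg(\int_{1/r}^\infty y^{-\overline{\beta}'q}\,y^{-\delta q/2}\,dy\bigg)^{1/q}\bigg(\int_r^\infty x^{-\overline{\gamma}p'}\,x^{-\delta p'/2}\,dx\bigg)^{1/p'}\leq C.
\]
Since the weights are piecewise powers, I would split each supremum according to whether $r\to\infty$ (so $1/r\to0$) or $r\to0$. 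The prime in $\overline{\beta}'$ swaps the roles of the origin and infinity between $u$ and $v$, with the effect that in the regime $r\to\infty$ the origin exponent $\beta_2$ of $u$ pairs with the infinity exponent $\gamma_2$ of $v^{1-p'}$, while in the regime $r\to0$ the infinity exponent $\beta_1$ of $u$ pairs with the origin exponent $\gamma_1$ of $v^{1-p'}$; in each regime a single index $i$ governs both factors.

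With this set up, the two needed facts are transparent. Convergence of the inner integrals pins down the admissible range: the integrals in \eqref{sufficiencycond1} converge at the origin exactly when $\beta_i<1/q$ (equivalently $\gamma_i<1/p'$ via \eqref{sufpowers}), and those in \eqref{sufficiencycond2} converge at infinity exactly when $\beta_i>1/q-\delta/2$ (equivalently $\gamma_i+\delta/2>1/p'$); demanding this for both pieces $i=1,2$ yields precisely \eqref{sufpowers2}. Once convergent, each integral is comparable to a power of $r$, and the balance $\beta_i-\gamma_i=1/q-1/p'$ from \eqref{sufpowers} makes the net exponent of $r$ in each product equal to $(\beta_i-\gamma_i)-(1/q-1/p')=0$, so both suprema are finite. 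Theorem~\ref{suffFtransforms} then delivers \eqref{pittpower}.

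The main obstacle is purely the bookkeeping forced by the piecewise weights: one must check that all four integrals converge at the relevant endpoint and that the $r$-power cancels \emph{simultaneously} in the two regimes $r\to0$ and $r\to\infty$, and for both \eqref{sufficiencycond1} and \eqref{sufficiencycond2}. This is exactly why the hypothesis requires the two-sided bound $1/q-\delta/2<\beta_i<1/q$ for \emph{each} $i=1,2$, rather than for a single exponent: the upper bounds are consumed by the origin-convergence in \eqref{sufficiencycond1} and the lower bounds by the infinity-convergence in \eqref{sufficiencycond2}. A minor preliminary point is to confirm that the transform \eqref{powertrans}--\eqref{kernelestcor} with $s(x)=x^\delta$ indeed falls under Theorem~\ref{suffFtransforms}, i.e. that \eqref{kernelest} holds with $w(y)\asymp y^\delta$; this is immediate upon matching the exponents in \eqref{kernelestcor} to $\min\{1,(x^\delta y^\delta)^{-1/2}\}$.
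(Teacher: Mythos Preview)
Your approach is correct and essentially identical to the paper's: both apply Theorem~\ref{suffFtransforms} at $a=1$ with $s(x)=w(x)=x^\delta$ and verify \eqref{sufficiencycond1}--\eqref{sufficiencycond2} by direct computation with the piecewise power weights. The only refinement in the paper is that, because the weights are piecewise, an integral such as $\int_0^{1/r}u(y)\,dy$ for small $r$ is not a single power of $r$ but a constant plus a power (the paper records this as $\max\{1,r^{\beta_1-1/q}\}$, etc.), so each regime produces two terms to bound rather than one; under \eqref{sufpowers}--\eqref{sufpowers2} both terms are bounded, so your conclusion is unaffected.
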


Taking $\beta_1=\beta_2=\beta$ and $\gamma_1=\gamma_2=\gamma$ in Theorem~\ref{theoremsuffpowerweights}, we derive the following corollary for transforms \eqref{powertrans} with kernels of power type.

	\begin{corollary}\label{corapplications}
	Let $1<p\leq q<\infty$, and let $F$ be of the form \eqref{powertrans} with kernel satisfying \eqref{kernelestcor} and such that $b_1-b_2=c_1-c_2>0$. Then, the inequality 
	\begin{equation}
	\label{generalweightednorm}
	\Vert y^{-\beta} Ff\Vert_q\lesssim \Vert x^{\gamma} f\Vert_p
	\end{equation}
holds 	with
	\begin{equation}
	\label{generalweightedconds}
	\beta=\gamma +c_0-b_0+c_1-b_1+\frac{1}{q}-\frac{1}{p'}, \qquad \frac{1}{q}+c_0+c_2 <\beta<\frac{1}{q}+c_0+c_1.
	\end{equation}
\end{corollary}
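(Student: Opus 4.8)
The plan is to read off Corollary~\ref{corapplications} from Theorem~\ref{theoremsuffpowerweights} by collapsing the two pieces of each weight, i.e. by setting $\beta_1=\beta_2=:\beta$ and $\gamma_1=\gamma_2=:\gamma$. With this choice the hypothesis $\beta_1-\gamma_1=\beta_2-\gamma_2$ of the theorem holds automatically, the piecewise powers in \eqref{powerweights} become the genuine powers $u(x)=x^{-\beta q}$ and $v(x)=x^{\gamma p}$, and the pair of conditions \eqref{sufpowers}--\eqref{sufpowers2} collapses to the single balance relation $\beta=\gamma+\tfrac1q-\tfrac1{p'}$ together with the single open range $\tfrac1q-\tfrac\delta2<\beta<\tfrac1q$. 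The conclusion \eqref{pittpower} is then already of the form \eqref{generalweightednorm}, so what remains is to identify $\delta$ and to rewrite the parameters in terms of the kernel data.

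The first step is to fix $\delta$. The standing hypothesis $b_1-b_2=c_1-c_2>0$ is exactly what renders the kernel bound \eqref{kernelestcor} self-similar: extracting the factor $x^{b_1}y^{c_1}$ rewrites the right-hand side as $x^{b_1}y^{c_1}\min\{1,(xy)^{-(b_1-b_2)}\}$, which matches the model estimate \eqref{kernelest} with $s(x)=x^{\delta}$, $w(y)=y^{\delta}$ and $\delta=2(b_1-b_2)=2(c_1-c_2)$. The positivity $\delta>0$ required by Theorem~\ref{theoremsuffpowerweights} is guaranteed by the hypothesis. This is the value of $\delta$ to be used throughout.

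The second, and main, step is the exponent bookkeeping. Three homogeneity factors must be accounted for when passing between $Ff$ in \eqref{powertrans} and the model transform $\int_0^\infty s(x)g(x)\tilde K(x,y)\,dx$ to which Theorem~\ref{theoremsuffpowerweights} is applied: the outer factor $y^{c_0}$, the input factor $x^{b_0}$, and the factor $x^{b_1}y^{c_1}$ extracted from the kernel. Writing $g(x)=x^{b_0-b_1+2b_2}f(x)$ and using $Ff(y)=y^{c_0+c_1}\,\tilde F g(y)$, one finds that the effective exponent on the transform side is shifted by $c_0+c_1$ and that on the input side by $b_0+b_1$. Consequently the balance relation $\beta=\gamma+\tfrac1q-\tfrac1{p'}$ becomes, after restoring these shifts, $\beta=\gamma+c_0-b_0+c_1-b_1+\tfrac1q-\tfrac1{p'}$, which is the first identity in \eqref{generalweightedconds}. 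Likewise the range $\tfrac1q-\tfrac\delta2<\beta<\tfrac1q$ is transported to $\tfrac1q+c_0+c_1-\tfrac\delta2<\beta<\tfrac1q+c_0+c_1$; since $\tfrac\delta2=c_1-c_2$, the lower endpoint becomes $\tfrac1q+c_0+c_2$, giving the two-sided range in \eqref{generalweightedconds}.

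I expect the only delicate point to be precisely this bookkeeping: one has to make sure each power of $x$ and $y$ is correctly absorbed (into the weights $u,v$, into $s^{1/a}$ with $a=1$, or into the kernel normalization) so that the substitution relating $Ff$ to the model transform is consistent, and to verify that the endpoints stay open, which is what keeps the power weights integrable at $0$ and at $\infty$ in the underlying conditions \eqref{sufficiencycond1}--\eqref{sufficiencycond2}. The reduction of the piecewise weights and the collapse of the two conditions into one are immediate, so no further ingredient beyond Theorem~\ref{theoremsuffpowerweights} is needed.
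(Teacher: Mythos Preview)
Your proposal is correct and follows essentially the same route as the paper's proof: both reduce to Theorem~\ref{theoremsuffpowerweights} with $\delta=2(b_1-b_2)=2(c_1-c_2)$ by factoring $x^{b_1}y^{c_1}$ out of the kernel, absorbing the remaining powers into a substitution $g(x)=x^{b_0-b_1+2b_2}f(x)$ (the paper writes this as $x^{b_0+b_1-2d}$ with $d=b_1-b_2$, which is the same exponent), and then translating the resulting balance relation and range for $\beta'$ by $c_0+c_1$ on the transform side and $b_0+b_1$ on the input side. Your orientation of the substitution (expressing $Ff$ in terms of the model transform applied to $g$) is in fact slightly cleaner than the paper's, but the argument is the same.
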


Here, additionally to the Fourier-type transforms with $s(x)=x^\delta$, $\delta>0$, (Hankel with $\alpha>-1/2$), one can consider any kind of transform as long as its kernel satisfies upper estimates given by power functions, as for instance the sine or $\mathscr{H}_\alpha$ transforms. We remark that although the sine transform is not of Fourier-type itself, it can be written as a weighted Hankel integral.

Similarly as for Theorem~\ref{thmSUFF-DLT}, one can prove that \eqref{generalweightedconds} is necessary for \eqref{generalweightednorm} to hold if $K(x,y)\asymp \min\{x^{b_1}y^{c_1},x^{b_2}y^{c_2}\}$ (see Theorem~\ref{ThmNecessitypowers}).  In particular, the $\mathscr{H}_\alpha$ transform with $\alpha>1/2$ has a kernel satisfying such estimate (see Remark~\ref{remarkHbigalpha} below).

 We list in Section~\ref{S4} the sufficient conditions for \eqref{generalweightednorm} to hold that are derived from Corollary~\ref{corapplications} for each of the aforementioned transforms, as well as the already known necessary and/or sufficient conditions.

In Section~\ref{SectionVanishingMoments} we give an application of Corollary~\ref{corapplications}. We study inequality \eqref{generalweightednorm} for transforms with the kernel represented by  power series
\begin{equation*}
K(x,y)=x^{b_1}y^{c_1}\sum_{m=0}^\infty a_m (xy)^{km}, \qquad k\in \N, \qquad a_m\in \C, \qquad b_1,c_1\in \R, \qquad x,y>0,
\end{equation*}
under certain assumptions. Following the idea of Sadosky and Wheeden in \cite{SaWh}, we prove in Theorem~\ref{generalvanishingmoments} that we can extend the range of $\beta$ in \eqref{generalweightedconds} for which inequality \eqref{generalweightednorm} is valid provided that certain moments of $f$ vanish. More precisely, we show that in such case, inequality \eqref{generalweightednorm} holds for some values $\beta>1/q+c_0+c_1$, thus extending the range given in \eqref{generalweightedconds}. Moreover, the assertion is not true in general for $\beta=1/q+c_0+c_1$.  Such statement is a generalization of \cite[Theorem 1]{SaWh}, where, in particular, the authors proved that if a function $f$ has zero mean, then Pitt's inequality \eqref{fouriersharprange} holds (with $n=1$) in the interval $1/q<\beta<1+1/q$ (recall that the optimal range for $\beta$ in the general case is $\max\{0,1/q-1/p'\}\leq \beta<1/q$), and the assertion does not hold for $\beta=1/q$.

 Finally, in Section~\ref{sectionGM} we use the same approach as in Theorem~\ref{suffFtransforms} to derive sufficient conditions for inequality \eqref{generalweightednorm} to hold in the case of functions satisfying general monotonicity conditions. As is known for the case of the sine and Hankel transforms (cf. \cite{DCGT,DLT,LTparis}), the sharp range of $\beta$ for which inequality \eqref{generalweightednorm} holds can be improved in many cases. Our main result of Section~\ref{sectionGM}, Theorem~\ref{thmgeneralGM}, yields the necessary and sufficient conditions for \eqref{generalweightednorm} to hold in the case of the sine and Hankel transforms, as well as for the $\mathscr{H}_\alpha$ transform.

	\section{Preliminary concepts}\label{s2}
	\subsection{The Bessel and Struve functions}\label{ss2.1}
	Here we give useful properties of the Bessel and Struve functions, which can be found in \cite{EMOT,watBesselfn}.
	
We start with the normalized Bessel function. For $x\leq 1$, one has $j_\alpha(x)\asymp 1$, whilst
$$
 j_\alpha(x)=\frac{C_\alpha}{x^{\alpha+1/2}}\cos\bigg(x-\frac{\pi(\alpha+1/2)}{2}\bigg) +O\big(x^{-\alpha-3/2}), \quad  x\to \infty,
 $$
 so that \eqref{besselest} holds. We also need an upper estimate for the primitive function of $x^{\nu}j_\alpha(xy)$, $\nu\in\R$, as a function of $x$. Let $g_{\alpha,y}^\nu$ be such that
 $$
 \frac{d}{dx}g_{\alpha,y}^\nu(x) =x^\nu j_\alpha(xy),
 $$
with constant of integration equal to zero. It is proved in \cite[Lemma 2.6]{unifconvHankel} (see also \cite[Lemma 3.1]{DCGT}) that
 \begin{equation}
 \label{eqBesselPrimitiveEstimate}
 |g_{\alpha,y}^\nu (x)|\lesssim \frac{x^{\nu-\alpha-1/2}}{y^{\alpha+3/2}}.
 \end{equation}

 In relation with the $\mathscr{H}_\alpha$ transform, we have the following property concerning the derivatives of $\mathbf{H}_\alpha$:
 \begin{equation}
 \label{derivativesYH}
  \frac{d}{dx}\big(x^{\alpha}\mathbf{H}_\alpha(x)\big)=x^{\alpha}\mathbf{H}_{\alpha-1}(x).
 \end{equation}

Moreover, for $x\leq 1$ and fixed $\alpha$, the estimate $\mathbf{H}_\alpha(x) \asymp x^{\alpha+1}$ holds. Indeed, in view of \eqref{Hseries}, we only need to show that for $x\leq 1$,
$$
\sum_{k=0}^\infty \frac{(-1)^k(x/2)^{2k}}{\Gamma(k+3/2)\Gamma(k+\alpha+3/2)}\asymp 1.
$$
On the one hand, the latter series is absolutely convergent for $x\leq 1$, and thus bounded from above. On the other hand,
$$
\sum_{k=0}^\infty \frac{(-1)^k(x/2)^{2k}}{\Gamma(k+3/2)\Gamma(k+\alpha+3/2)}\geq \frac{1}{\Gamma(3/2)\Gamma(\alpha+3/2)}\bigg(1-\frac{x^2}{10(\alpha+5/2)}\bigg) \asymp 1, \qquad x\leq 1.
$$

 For large $x$, we have the following asymptotic expansion \cite[p. 332]{watBesselfn}:
 \begin{align}
 \mathbf{H}_\alpha(x)&= 	\bigg(\frac{\pi x}{2}\bigg)^{-1/2}(\sin(x-\alpha\pi/2-\pi/4))+\frac{(x/2)^{\alpha-1}}{\Gamma(\alpha+1/2)\Gamma(1/2)}(1+O(x^{-2})), \label{eqEstimateStruveLarge}
 \end{align}
 from which we can deduce
 \begin{align*}
 |\mathbf{H}_\alpha(x)|&\lesssim x^{-1/2}+x^{\alpha-1} \asymp x^{\max\{-1/2,\alpha-1\}} =\begin{cases} x^{-1/2}, & \text{if }\alpha<1/2, \\
 x^{\alpha-1}, &\text{if }\alpha\geq 1/2,\end{cases}
 \end{align*}
so that \eqref{Hest} holds.

\begin{remark}
	\label{remarkHbigalpha}
	It is worth mentioning that for $\alpha\geq 1/2$ and $x>0$,  $\mathbf{H}_\alpha(x)$ is nonnegative \cite[p. 337]{watBesselfn}, and moreover, it easily follows from  \eqref{eqEstimateStruveLarge} that if $\alpha>1/2$, then there is $x_0>1$ such that
		\begin{equation*}
		\label{eqEstimateHspecial}
		\mathbf{H}_\alpha(x) \asymp x^{\alpha-1}, \qquad x>x_0.
		\end{equation*}
		Hence, for such choice of $\alpha$ one has $\mathbf{H}_\alpha(x)\asymp \min\{x^{\alpha+1},x^{\alpha-1}\}$.
\end{remark}

\subsection{Auxiliary lemma}

In the same spirit as in \eqref{eqBesselPrimitiveEstimate}, we  need an upper estimate for the primitive function of $x^\nu \mathbf{H}_\alpha(xy)$. Let us denote 
$$h^\nu_{\alpha,y}(x)=\int_0^x t^\nu \mathbf{H}_\alpha(ty)\, dt, \qquad \nu \geq 1/2, \quad \alpha>-1/2.$$
Then it follows from \eqref{Hest} and the fundamental theorem of calculus that
 $$
 \frac{d}{dx}h^\nu_{\alpha,y}(x)=x^\nu \mathbf{H}_\alpha(xy).
 $$			
 \begin{lemma}\label{LemmaprimitiveStruve}We have, for any $\nu \geq 1/2$ and $\alpha>-1/2$,
 	$$ 	|h_{\alpha,y}^\nu(x)|\lesssim y^{-1}x^\nu \min\{(xy)^{\alpha+2},(xy)^\alpha\}.$$
 \end{lemma}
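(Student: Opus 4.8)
The plan is to first strip off the parameter $y$ by a change of variables, and then treat the regimes $xy\le 1$ and $xy>1$ separately, the latter being the only genuine difficulty. Substituting $u=ty$ in the definition gives
$$
h_{\alpha,y}^\nu(x)=y^{-\nu-1}\int_0^{xy}u^\nu \mathbf{H}_\alpha(u)\,du=:y^{-\nu-1}H_\alpha^\nu(xy),
$$
and since, with $z=xy$, the claimed bound $y^{-1}x^\nu\min\{(xy)^{\alpha+2},(xy)^\alpha\}$ equals $y^{-\nu-1}z^\nu\min\{z^{\alpha+2},z^\alpha\}$, the lemma is equivalent to the scale-free estimate
$$
|H_\alpha^\nu(z)|\lesssim \min\{z^{\nu+\alpha+2},z^{\nu+\alpha}\},\qquad z>0,\ \nu\ge \tfrac12,\ \alpha>-\tfrac12.
$$

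For $z\le 1$ I would simply integrate the small-argument estimate $\mathbf{H}_\alpha(u)\asymp u^{\alpha+1}$ (established above from the series \eqref{Hseries}), obtaining $|H_\alpha^\nu(z)|\lesssim \int_0^z u^{\nu+\alpha+1}\,du\lesssim z^{\nu+\alpha+2}$; the integral converges at the origin precisely because $\nu+\alpha+1>-1$, which is guaranteed by $\nu\ge\tfrac12$ and $\alpha>-\tfrac12$. This settles the first entry of the minimum.

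For $z>1$ I would split $\int_0^z=\int_0^{x_0}+\int_{x_0}^z$, where $x_0$ is a fixed large constant beyond which the asymptotic expansion \eqref{eqEstimateStruveLarge} is valid. Since $\mathbf{H}_\alpha$ is continuous, the first piece is $O(1)$, and as $\nu+\alpha>0$ this is $\lesssim z^{\nu+\alpha}$. On the tail I insert \eqref{eqEstimateStruveLarge}: its algebraic part contributes $\lesssim\int_{x_0}^z u^{\nu+\alpha-1}\,du\lesssim z^{\nu+\alpha}$, again using $\nu+\alpha>0$, and the $O(u^{-2})$ relative correction only improves this. The crux is the leading oscillatory term, proportional to $u^{\nu-1/2}\sin(u-\alpha\pi/2-\pi/4)$: a pointwise bound would give $\int_{x_0}^z u^{\nu-1/2}\,du\asymp z^{\nu+1/2}$, which exceeds the target $z^{\nu+\alpha}$ whenever $\alpha<\tfrac12$. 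To recover the cancellation I would integrate by parts once, differentiating $u^{\nu-1/2}$ and integrating the sine; the boundary terms are $\lesssim z^{\nu-1/2}$, and the new integral has integrand decaying like $u^{\nu-3/2}$, so it can now be bounded pointwise by $z^{\nu-1/2}$ (for $\nu>\tfrac12$) or by $\log z$ (for $\nu=\tfrac12$). In every case these are $\lesssim z^{\nu+\alpha}$, because $\alpha>-\tfrac12$ forces $z^{\nu-1/2}\lesssim z^{\nu+\alpha}$ and $\log z\lesssim z^{1/2+\alpha}$ for $z>1$.

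The main obstacle is exactly this oscillatory term in the regime $-\tfrac12<\alpha<\tfrac12$: one integration by parts is essential, and it is the hypotheses $\nu\ge\tfrac12$ (which keeps the resulting integrand integrable, with only a borderline logarithm at $\nu=\tfrac12$) and $\alpha>-\tfrac12$ (which makes every exponent compare correctly against $z^{\nu+\alpha}$) that make the bookkeeping close. The remaining higher-order oscillatory corrections in \eqref{eqEstimateStruveLarge} decay at least like $u^{\nu-3/2}$ and so need no further integration by parts, being handled by the same pointwise estimate.
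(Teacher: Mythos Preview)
Your argument is correct, but it proceeds quite differently from the paper's. After the same change of variables, the paper does \emph{not} insert the asymptotic expansion \eqref{eqEstimateStruveLarge}; instead it writes $z^{\nu}\mathbf{H}_\alpha(z)=z^{\nu-\alpha-1}\cdot z^{\alpha+1}\mathbf{H}_\alpha(z)$ and uses the recurrence identity \eqref{derivativesYH}, so that $z^{\alpha+1}\mathbf{H}_\alpha(z)$ integrates exactly to $z^{\alpha+1}\mathbf{H}_{\alpha+1}(z)$. One integration by parts then leaves only integrals of $z^{\nu-1}\mathbf{H}_{\alpha+1}(z)$, and since $\alpha+1>1/2$ always, the bound \eqref{Hest} for $\mathbf{H}_{\alpha+1}$ gives the clean pointwise estimate $\min\{z^{\alpha+2},z^{\alpha}\}$ with no oscillation to contend with. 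This is shorter and avoids all the borderline bookkeeping at $\nu=1/2$ and $\alpha\downarrow -1/2$ that your route requires. On the other hand, your approach is more robust: it uses only the generic shape of the large-$x$ expansion (oscillatory $u^{-1/2}$ term plus algebraic $u^{\alpha-1}$ term plus lower order), so it would transfer verbatim to other kernels with that structure, whereas the paper's proof relies on the Struve-specific identity \eqref{derivativesYH}.
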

 \begin{proof}By definition of $h_{\alpha,y}^\nu$,
 	$$
 h_{\alpha,y}^\nu(x)=\int_0^x t^\nu \mathbf{H}_\alpha(ty)\, dt=\frac{1}{y}\int_0^{xy} \bigg(\frac{z}{y}\bigg)^\nu \mathbf{H}_\alpha(z)\, dz=\frac{1}{y^{\nu+1}} \int_{0}^{xy} z^{\nu-\alpha-1} z^{\alpha+1}\mathbf{H}_\alpha(z)\, dz,
 	$$
 	where we have applied the change of variable $z=ty$. 
 	
 	If $\nu=\alpha+1$, then we simply have $h_{\alpha,y}^{\alpha+1}(x)=y^{-1} x^{\alpha+1}\mathbf{H}_{\alpha+1}(xy)$, by \eqref{derivativesYH}, and \eqref{Hest} implies that
 	$$
 	|h_{\alpha,y}^{\alpha+1}(x)|\lesssim y^{-1}x^{\alpha+1}\min\{(xy)^{\alpha+2},(xy)^{\alpha}\}
 	$$
 	If $\nu\neq \alpha+1$, integration by parts along with \eqref{derivativesYH} yields
 	$$
 	h_{\alpha,y}^\nu(x) = \frac{1}{y}x^{\nu}\mathbf{H}_{\alpha+1}(xy)-\frac{(\nu-\alpha-1)}{y^{\nu+1}}\int_0^{xy}z^{\nu-1}\mathbf{H}_{\alpha+1}(z)\, dz=:A-B.
 	$$
 	Let us now estimate $A$ and $B$ (recall that since $\alpha>-1/2$, $\mathbf{H}_{\alpha+1}$ is nonnegative). On the one hand,
 	$$
 	A\lesssim \frac{1}{y}x^\nu \min\{(xy)^{\alpha+2},(xy)^{\alpha}\}.
 	$$
 	On the other hand, we consider two cases in order to estimate $B$. If $xy\leq 1$, we have
 	$$
 	|B|\lesssim \frac{1}{y^{\nu+1}} \int_0^{xy} z^{\nu+\alpha+1}\, dz \asymp \frac{(xy)^{\nu+\alpha+2}}{y^{\nu+1}}= \frac{1}{y}x^{\nu} (xy)^{\alpha+2}.
 	$$
 	If $xy\geq 1$,
 	$$
 	|B|\lesssim \frac{1}{y^{\nu+1}}\int_0^{xy} z^{\nu+\alpha-1}\, dz \asymp \frac{1}{y}x^\nu (xy)^\alpha.
 	$$
 	Collecting these estimates, we conclude
 	$$
 	|h_{\alpha,y}^\nu(x)|\lesssim A+|B|\lesssim \frac{1}{y}x^\nu \min\{(xy)^{\alpha+2},(xy)^\alpha\},
 	$$
 	as desired.
 \end{proof}

	\section{Weighted norm inequalities for integral transforms}\label{S3}
	
	In this section, we aim to study what necessary and sufficient conditions should nonnegative weights $u,v$ satisfy for the weighted norm inequality \eqref{eqpittDLT} to hold for the transform $F$ given by \eqref{transform}, assuming only estimate \eqref{kernelest}.

	\subsection{Sufficiency results}

	In order to prove Theorem~\ref{suffFtransforms}, we make use of Hardy's inequality (cf. \cite{hardyineq}). If $p=1$, $q=\infty$ or $p=q=\infty$, the result holds under the usual modification of $L^p$ norms.				
	\begin{lemma}\label{hardylema}
		
		Let $1\leq  p \leq q \leq \infty$. If $u,v$ are nonnegative, there exists $B>0$ such that the inequality
		$$
		\bigg( \int_0^\infty u(y) \bigg(\int_0^y |g(x)|\, dx\bigg)^q dy\bigg)^{1/q}\leq B\bigg(\int_0^\infty v(x)|g(x)|^p\, dx \bigg)^{1/p}
		$$
		holds for every measurable $g$ if and only if there exists $C>0$ such that for every $r>0$,
		$$
		\bigg( \int_r^\infty u(y) \,dy\bigg)^{1/q}\bigg(\int_0^r v(x)^{1-p'}\, dx\bigg)^{1/p'}\leq C.
		$$
		Also, there exists $B>0$ such that the inequality
		$$
		\bigg(\int_0^\infty u(y) \bigg(\int_y^\infty |g(x)|\, dx\bigg)^q dy\bigg)^{1/q}\leq B\bigg(\int_0^\infty v(x)|g(x)|^p\, dx\bigg)^{1/p}
		$$
		holds for every measurable $g$ if and only if there exists $C>0$ such that for every $r>0$,
		$$
		\bigg( \int_0^r u(y)\, dy\bigg)^{1/q} \bigg(\int_r^\infty v(x)^{1-p'}\, dx\bigg)^{1/p'}\leq C.
		$$
	\end{lemma}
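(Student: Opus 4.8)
The plan is to recognize this as the classical weighted Hardy inequality and to establish each equivalence by proving necessity and sufficiency separately, writing $Hg(y):=\int_0^y g(x)\,dx$ and $V(y):=\int_0^y v(x)^{1-p'}\,dx$. For the necessity of the first inequality I would argue by testing against an explicit function. Assuming $g\ge 0$ without loss of generality, fix $r>0$ and take $g=v^{1-p'}\chi_{(0,r)}$. Then for $y>r$ one has $Hg(y)=\int_0^r v^{1-p'}\,dx=V(r)$, so the left-hand side dominates $(\int_r^\infty u)^{1/q}V(r)$, while the right-hand side equals $B\big(\int_0^r v\cdot v^{(1-p')p}\big)^{1/p}=B\,V(r)^{1/p}$, using the identity $1+p(1-p')=1-p'$. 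Cancelling the common factor $V(r)^{1/p}$ yields the stated condition with $C=B$; the degenerate cases $V(r)=\infty$ or $V(r)=0$ are handled by truncating $g$ to the set $\{v^{1-p'}\le N\}$ and letting $N\to\infty$.

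For sufficiency the workhorse is a weighted Hölder inequality carrying a free parameter. Fixing $\epsilon\in(0,1/p')$ and inserting the factor $V^{\epsilon}V^{-\epsilon}$ before applying Hölder with exponents $p,p'$, one gets
\[
Hg(y)\le\Big(\int_0^y g^p v\,V^{\epsilon p}\Big)^{1/p}\Big(\int_0^y v^{1-p'}V^{-\epsilon p'}\Big)^{1/p'}\lesssim\Big(\int_0^y g^p v\,V^{\epsilon p}\Big)^{1/p}V(y)^{(1-\epsilon p')/p'},
\]
where the last step uses $\int_0^y V'V^{-\epsilon p'}=(1-\epsilon p')^{-1}V(y)^{1-\epsilon p'}$. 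Writing $G(y)=\int_0^y g^p v\,V^{\epsilon p}$ and $S=\int_0^\infty g^p v$, I would raise this to the power $q$, integrate against $u$, expand $G^{q/p}$ through the fundamental theorem of calculus, and apply Fubini to reach $\int_0^\infty u\,(Hg)^q\lesssim\int_0^\infty G(t)^{q/p-1}G'(t)\big(\int_t^\infty u\,V^{(1-\epsilon p')q/p'}\big)\,dt$. Feeding the hypothesis $\int_y^\infty u\le C^q V(y)^{-q/p'}$ through an integration by parts gives $\int_t^\infty u\,V^{(1-\epsilon p')q/p'}\lesssim C^q V(t)^{-\epsilon q}$; and the monotonicity bound $G(t)\le S\,V(t)^{\epsilon p}$ makes every power of $V$ cancel, leaving $\lesssim C^q S^{q/p-1}\int_0^\infty g^p v=C^q S^{q/p}$. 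Taking $q$-th roots closes the estimate.

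I expect the sufficiency in the regime $p<q$ to be the main obstacle: the naive Hölder bound loses too much, and it is precisely the combination of the parameter trick (which manufactures the matching power of $V$) with the monotonicity bound on $G$ that forces the weights to cancel. When $p=q$ the exponent $q/p-1$ vanishes and the argument collapses to an elementary telescoping computation, so no parameter is needed there. The remaining ingredients are routine and I would dispatch them quickly: the endpoint cases $p=1$, $q=\infty$ reduce to direct estimates under the stated modifications of the $L^p$ norms, and the second (dual) inequality follows from the first via the reflection $x\mapsto 1/x$, $y\mapsto 1/y$, which interchanges $\int_0^y$ with $\int_{1/y}^\infty$ and transforms the weights accordingly, thereby turning its Muckenhoupt condition into the one already treated.
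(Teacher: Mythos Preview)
The paper does not supply a proof of this lemma; it is stated as the classical weighted Hardy inequality with a citation to Bradley (reference \texttt{hardyineq}), and the paper immediately moves on to use it. So there is nothing in the paper to compare your argument against.

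Your proposal is a correct sketch of the standard proof. The necessity step via the test function $g=v^{1-p'}\chi_{(0,r)}$ is exactly the classical argument, and the handling of the degenerate cases by truncation is routine. For sufficiency, the parameter trick---inserting $V^{\epsilon}V^{-\epsilon}$ before H\"older---is the approach that goes back to Tomaselli, Talenti, and Muckenhoupt; your bookkeeping with the powers of $V$ is right: after using $G(t)\le S\,V(t)^{\epsilon p}$ and $G'(t)=g(t)^p v(t) V(t)^{\epsilon p}$, the exponents $\epsilon(q-p)+\epsilon p-\epsilon q$ indeed collapse to zero, leaving $C^q S^{q/p}$. The reduction of the dual inequality by the substitution $x\mapsto 1/x$ is standard and works as you describe.

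A couple of small technicalities would need care in a full write-up: the integration-by-parts step that produces $\int_t^\infty u\,V^{(1-\epsilon p')q/p'}\lesssim C^q V(t)^{-\epsilon q}$ tacitly assumes $V$ is absolutely continuous and that the boundary term at infinity is controlled (both are easily arranged by approximation or by noting $U(y)V(y)^{q/p'}\le C^q$), and the case $V(\infty)<\infty$ should be addressed separately. But the skeleton is sound and matches the literature the paper cites.
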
			
	
	\begin{proof}[Proof of Theorem~\ref{suffFtransforms}]
		It follows from \eqref{pointwiseest} and the change of variables $y\to 1/y$ that
		\begin{align*}
		\Vert w^{1/a'} Ff\Vert_{q,u} &\lesssim 
		  \bigg(\int_0^\infty u(1/y) w(1/y)^{q/a'} y^{-2}\bigg(\int_0^y s(x)|f(x)|\, dx \bigg)^q dy\bigg)^{1/q}\\
		&\phantom{=}+\bigg(\int_{0}^\infty u(1/y) w(1/y)^{q(1/a'-1/2)} y^{-2}\bigg(\int_y^\infty s(x)^{1/2}|f(x)|\, dx\bigg)^q dy\bigg)^{1/q}=:I_1+I_2.
		\end{align*}
		We proceed to estimate $I_1$ and $I_2$ from above. Applying Lemma~\ref{hardylema} with $g(x)=s(x) f(x)$, we have the estimate
		$$
		I_1=\bigg(\int_0^\infty u(1/y) w(1/y)^{q/a'}y^{-2}\bigg(\int_0^y s(x)|f(x)|\, dx\bigg)^q dy\bigg)^{1/q} \lesssim \bigg( \int_0^\infty v(x) s(x)^{p/a}|f(x)|^p\, dx \bigg)^{1/p},
		$$
		provided that
		$$
		\bigg(\int_{r}^\infty u(1/y)w(1/y)^{q/a'}y^{-2}\, dy\bigg)^{1/q}\bigg( \int_0^r v(x)^{1-p'} s(x)^{p'/a'}\,dx\bigg)^{1/p'}\leq C,\qquad r>0
		$$
		is satisfied, or equivalently, if \eqref{sufficiencycond1} holds. Finally, if \eqref{sufficiencycond2} holds, or equivalently, if
		$$
		\bigg(\int_0^r u(1/y)w(1/y)^{q(1/a'-1/2)}y^{-2}\, dy\bigg)^{1/q}\bigg(\int_r^\infty v(x)^{1-p'} s(x)^{p'(1/a'-1/2)}\, dx \bigg)^{1/p'}\leq C, \qquad r>0,
		$$
		then, applying Lemma~\ref{hardylema} with $g(x)=s(x)^{1/2}f(x)$, we obtain the estimate
		\begin{align*}
		I_2&=\bigg(\int_0^\infty u(1/y)w(1/y)^{q(1/a'-1/2)}y^{-2}\bigg(\int_y^\infty s(x)^{1/2}|f(x)|\, dx\bigg)^q dy\bigg)^{1/q} \\
		&\lesssim \bigg(\int_0^\infty v(x) s(x)^{p/a}|f(x)|^p\, dx\bigg)^{1/p},
		\end{align*}
		which establishes inequality \eqref{eqpittDLT}.
	\end{proof}
	In order to prove Corollary~\ref{CORgluing} we first show a generalization of the gluing lemma \cite[Lemma 2.2]{GKP}:
	\begin{lemma}\label{lemmaglue}
		Let $f,g\geq 0$, $\alpha,\beta>0$ and let $\varphi,\psi$ be nonnegative and nonincreasing. Assume $\varphi(s)^\alpha \asymp \psi(s)^\beta$. Then, the conditions 
		\begin{equation}\label{gluing1}
		\sup_{t>0} \bigg(\int_0^t g(s)\, ds\bigg)^\beta  \bigg( \int_t^\infty \varphi(s)f(s)\, ds\bigg)^\alpha<\infty
		\end{equation}
		and
		\begin{equation}
		\label{gluing2}
		\sup_{t>0}\bigg( \int_0^t f(s)\, ds\bigg)^\alpha \bigg(\int_t^\infty \psi(s)g(s)\, ds\bigg)^\beta<\infty
		\end{equation}
		hold simultaneously if and only if 
		\begin{equation}
		\label{gluing3}
		\sup_{t>0}
		\bigg(\int_0^t g(s)\, ds+\frac{1}{\psi(t)}\int_t^\infty \psi(s)g(s)\, ds \bigg)^\beta\bigg( \varphi(t)\int_0^t
		f(s)\, ds + \int_t^\infty \varphi(s) f(s)\, ds\bigg)^\alpha  <\infty.	\end{equation}
	\end{lemma}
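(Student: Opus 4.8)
The plan is to prove the two implications separately, with \eqref{gluing3}$\,\Rightarrow\,$\eqref{gluing1},\eqref{gluing2} being routine and the converse carrying all the difficulty. Throughout I abbreviate $G_1(t)=\int_0^t g$, $G_2(t)=\int_t^\infty\psi g$, $F_1(t)=\int_0^t f$, $F_2(t)=\int_t^\infty\varphi f$, so the quantities whose suprema occur are $G_1^\beta F_2^\alpha$, $F_1^\alpha G_2^\beta$ and $(G_1+\psi^{-1}G_2)^\beta(\varphi F_1+F_2)^\alpha$; I denote by $A_1,A_2$ the suprema in \eqref{gluing1},\eqref{gluing2}. The standing tools are the elementary equivalence $(a+b)^\theta\asymp a^\theta+b^\theta$ for $a,b\geq 0$, $\theta>0$, the monotonicity of $\varphi,\psi$, and the hypothesis $\varphi^\alpha\asymp\psi^\beta$; as usual I may assume the integrals in play are finite and positive, the degenerate cases being trivial or handled by a standard limiting argument. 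For the easy direction, nonnegativity gives the pointwise bounds $G_1+\psi^{-1}G_2\geq G_1$ and $\varphi F_1+F_2\geq F_2$, which yield \eqref{gluing1}; likewise $G_1+\psi^{-1}G_2\geq \psi^{-1}G_2$ and $\varphi F_1+F_2\geq\varphi F_1$ together with $\varphi^\alpha\asymp\psi^\beta$ give $\psi^{-\beta}\varphi^\alpha G_2^\beta F_1^\alpha\asymp G_2^\beta F_1^\alpha$, hence \eqref{gluing2}.

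For the converse I would first expand, via $(a+b)^\theta\asymp a^\theta+b^\theta$, the product in \eqref{gluing3} into the four terms $G_1^\beta\varphi^\alpha F_1^\alpha$, $G_1^\beta F_2^\alpha$, $\psi^{-\beta}G_2^\beta\varphi^\alpha F_1^\alpha$ and $\psi^{-\beta}G_2^\beta F_2^\alpha$. The second term is exactly the expression in \eqref{gluing1}, and the third is $\asymp G_2^\beta F_1^\alpha$ by $\varphi^\alpha\asymp\psi^\beta$, hence bounded by $A_2$; so these two are controlled for free. The main obstacle is the two \emph{cross} terms $T_1:=\varphi^\alpha F_1^\alpha G_1^\beta$ (the product of the two increasing integrals) and $T_4:=\psi^{-\beta}G_2^\beta F_2^\alpha$ (the product of the two decreasing ones): neither single hypothesis controls them, and the naive monotonicity estimates (e.g. $\psi(t)^{-1}G_2(t)\leq\int_t^\infty g$) are too lossy.

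The key device for $T_1(t)$ is a halving argument. By continuity of $F_1$ choose $s<t$ with $F_1(s)=\tfrac12 F_1(t)$. Restricting the tails to $[s,t]$ and using that $\varphi,\psi$ are nonincreasing gives $F_2(s)\geq\varphi(t)\bigl(F_1(t)-F_1(s)\bigr)=\tfrac12\varphi(t)F_1(t)$ and $G_2(s)\geq\psi(t)\bigl(G_1(t)-G_1(s)\bigr)$. Inserting these into \eqref{gluing1} and \eqref{gluing2} evaluated at $s$, and invoking $\varphi^\alpha\asymp\psi^\beta$, bounds $\varphi(t)^\alpha F_1(t)^\alpha G_1(s)^\beta\lesssim A_1$ and $\varphi(t)^\alpha F_1(t)^\alpha\bigl(G_1(t)-G_1(s)\bigr)^\beta\lesssim A_2$; summing and applying $G_1(s)^\beta+\bigl(G_1(t)-G_1(s)\bigr)^\beta\asymp G_1(t)^\beta$ yields $T_1(t)\lesssim A_1+A_2$.

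The term $T_4(t)$ is treated by the mirror-image argument: choose $s>t$ with $F_2(s)=\tfrac12 F_2(t)$, and use the reverse monotonicity of $\varphi,\psi$ on $[t,s]$ to get $G_1(s)\geq\psi(t)^{-1}\bigl(G_2(t)-G_2(s)\bigr)$ and $F_1(s)\geq\varphi(t)^{-1}\bigl(F_2(t)-F_2(s)\bigr)=\tfrac12\varphi(t)^{-1}F_2(t)$. Feeding these into \eqref{gluing1},\eqref{gluing2} at $s$, using $\varphi^\alpha\asymp\psi^\beta$, and summing as before (now splitting $G_2(t)^\beta\asymp G_2(s)^\beta+(G_2(t)-G_2(s))^\beta$) gives $T_4(t)\lesssim A_1+A_2$. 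Collecting the four estimates shows the supremum in \eqref{gluing3} is $\lesssim A_1+A_2<\infty$, which completes the proof. The delicate point throughout is precisely this pairing of the two hypotheses at a common auxiliary point $s$: it is what renders the cross terms — invisible to either condition in isolation — controllable.
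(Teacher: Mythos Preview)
Your proof is correct and follows essentially the same route as the paper: expand \eqref{gluing3} into four terms, observe that two of them are \eqref{gluing1} and \eqref{gluing2} on the nose (up to $\varphi^\alpha\asymp\psi^\beta$), and control each cross term by a halving argument at an auxiliary point between $0$ and $t$ (resp.\ between $t$ and $\infty$). The only cosmetic difference is that for the tail cross term $T_4$ you halve $F_2$ whereas the paper halves $G_2$; by the symmetry of the roles of $(f,\varphi)$ and $(g,\psi)$ either choice works.
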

	\begin{proof}[Proof of Lemma~\ref{lemmaglue}]
		It is clear that \eqref{gluing3} is equivalent to the finiteness of
		\begin{align}
		\sup_{t>0}\bigg[& \varphi(t)^\alpha \bigg(\int_0^t g(s)\, ds \bigg)^\beta \bigg(\int_0^t f(s)\, ds\bigg)^\alpha + \bigg(\int_0^t g(s)\, ds\bigg)^\beta\bigg(\int_t^\infty \varphi(s) f(s)\, ds\bigg)^\alpha \nonumber \\&  +\frac{\varphi(t)^\alpha}{\psi(t)^\beta} \bigg(\int_t^\infty \psi(s)g(s) \,ds \bigg)^\beta \bigg(\int_0^t f(s)\, ds\bigg)^\alpha \nonumber \\ & +\frac{1}{\psi(t)^\beta} \bigg(\int_t^\infty \psi(s) g(s)\, ds \bigg)^\beta \bigg(\int_t^\infty \varphi(s) f(s)\, ds \bigg)^\alpha\bigg].\label{gluingsup}
		\end{align}
		From the latter it is obvious that \eqref{gluing3} implies \eqref{gluing1} and \eqref{gluing2}, since $\varphi(t)^\alpha/\psi(t)^\beta\asymp 1$. In order to prove the converse, note that the second term of \eqref{gluingsup} corresponds to \eqref{gluing1}, whilst the third term of \eqref{gluingsup} corresponds to \eqref{gluing2} (after applying the equivalence $\varphi(t)^\alpha \asymp \psi(t)^\beta$ on the term outside the integrals). Thus, it remains to prove the finiteness of the first and fourth terms of \eqref{gluingsup}. For $t>0$, let $b(t)\in (0,t)$ be the number such that $\int_0^{b(t)} f(s)\, ds = \int_{b(t)}^t f(s)\, ds$. Then, using the monotonicity of $\varphi$ and $\psi$, and the equivalence $\varphi(s)^\alpha\asymp \psi(s)^\beta$, we get
		\begin{align*}
		&\phantom{\asymp}\varphi(t)^\alpha\bigg(\int_0^t g(s)\, ds\bigg)^\beta \bigg(\int_0^t f(s)\, ds\bigg)^\alpha \\
		&\asymp \varphi(t)^\alpha \bigg(\int_0^{b(t)} g(s)\, ds\bigg)^\beta \bigg(\int_0^t f(s)\, ds\bigg)^\alpha 
		+ \psi(t)^\beta \bigg( \int_{b(t)}^t g(s)\, ds\bigg)^\beta \bigg(\int_0^t f(s)\, ds\bigg)^\alpha \\
		&\leq \bigg(\int_0^{b(t)} g(s)\, ds\bigg)^\beta \bigg(\int_{b(t)}^t \varphi(s)f(s)\, ds \bigg)^\alpha 	+ \bigg(\int_{b(t)}^t \psi(s) g(s)\, ds\bigg)^\beta \bigg(\int_0^{b(t)} f(s)\, ds\bigg)^\alpha \\ 
		&\leq \sup_{t>0} \bigg(\int_0^t g(s)\, ds \bigg)^\beta \bigg(\int_t^\infty \varphi(s) f(s) \, ds\bigg)^\alpha + \sup_{t>0}\bigg(\int_t^\infty \psi(s) g(s)\, ds \bigg)^\beta \bigg(\int_0^t f(s)\, ds\bigg)^\alpha <\infty.
		\end{align*}
		Similarly, for $t\in (0,\infty)$, let $c(t)\in (t,\infty)$ be such that $\int_t^{c(t)} \psi(s)g(s)\, ds =\int_{c(t)}^\infty \psi(s)g(s)\, ds$. We have
		{\small
		\begin{align*}
		&\phantom{\asymp} \frac{1}{\psi(t)^\beta}\bigg( \int_t^\infty \psi(s)g(s)\, ds\bigg)^\beta \bigg(\int_t^\infty \varphi(s)f(s)\, ds\bigg)^\alpha \\
		& \asymp \frac{1}{\psi(t)^\beta}\bigg( \bigg( \int_t^\infty \psi(s)g(s)\, ds\bigg)^\beta \bigg(\int_t^{c(t)} \varphi(s)f(s)\, ds\bigg)^\alpha + \bigg( \int_t^\infty \psi(s)g(s)\, ds\bigg)^\beta \bigg(\int_{c(t)}^\infty \varphi(s)f(s)\, ds\bigg)^\alpha\bigg)\\
		&\asymp \frac{1}{\psi(t)^\beta} \bigg( \bigg(\int_{c(t)}^\infty \psi(s)g(s)\, ds\bigg)^\beta \bigg( \int_t^{c(t)} \varphi(s)f(s)\, ds  \bigg)^\alpha +  \bigg(\int_t^{c(t)} \psi(s)g(s)\, ds\bigg)^\beta \bigg(\int_{c(t)}^\infty \varphi(s)f(s)\, ds\bigg)^\alpha \bigg)\\
		&\leq \bigg(\int_{c(t)}^\infty \psi(s)g(s)\, ds \bigg)^\beta \bigg(\int_t^{c(t)} f(s)\, ds\bigg)^\alpha + \bigg(\int_t^{c(t)} g(s)\, ds\bigg)^\beta \bigg(\int_{c(t)}^\infty \varphi(s)f(s)\, ds \bigg)^\alpha\\
		&\leq \sup_{t>0} \bigg(\int_t^\infty \psi(s)g(s)\, ds\bigg)^\beta \bigg(\int_0^t f(s)\, ds\bigg)^\alpha+\sup_{t>0} \bigg(\int_0^t g(s)\, ds\bigg)^\beta \bigg(\int_t^\infty \varphi(s)f(s)\, ds\bigg)^\alpha<\infty,
		\end{align*}
	}
		as desired.
	\end{proof}
	\begin{proof}[Proof of Corollary~\ref{CORgluing}]
		Note that we can rewrite conditions \eqref{sufficiencycond1} and \eqref{sufficiencycond2} (with $a=1$) as
		\begin{align}
			\sup_{r>0} \bigg( \int_r^\infty u(1/y)y^{-2}\, dy\bigg)^{1/q}\bigg(\int_0^r v(y)^{1-p'}\, dy\bigg)^{1/p'}&<\infty, \label{EQgluin-aux1}
			\\
			\sup_{r>0} \bigg( \int_0^r u(1/y)w(1/y)^{-q/2} y^{-2}\, dy\bigg)^{1/q}\bigg(\int_r^\infty v(y)^{1-p'}s(y)^{-p'/2}\, dy\bigg)^{1/p'}&<\infty.\label{EQgluin-aux2}
		\end{align}
		Putting
		$$
		f(y)=u(1/y)w(1/y)^{-q/2}y^{-2}, \qquad g(y)=v(y)^{1-p'},
		$$
		together with $\varphi(y)=w(1/y)^{q/2}$, $\psi(y)=s(y)^{-p'/2}$,  $\alpha=1/q$ and $\beta=1/p'$, it is clear that \eqref{EQgluin-aux1} and \eqref{EQgluin-aux2} are the same as \eqref{gluing1} and \eqref{gluing2} respectively. Also, observe that \eqref{inverse} is equivalent to $\varphi(y)^\alpha \asymp \psi(y)^\beta$. Hence, we are under the hypotheses of Lemma~\ref{CORgluing}, and we can deduce that the joint fulfilment of \eqref{sufficiencycond1} and \eqref{sufficiencycond2} is equivalent to 
		\begin{align*}
		\sup_{t>0}\bigg[ \bigg( \int_0^t v(y)^{1-p'}\, dy +& s(t)^{p'/2}\int_t^\infty v(y)^{1-p'}s(y)^{-p'/2}\, dy\bigg)^{1/p'}\\
		&\times \bigg( w(1/t)^{q/2}\int_0^t u(1/y)w(1/y)^{-q/2}y^{-2}\, dy + \int_t^\infty u(1/y)y^{-2}\, dy  \bigg)^{1/q}\bigg]<\infty,
		\end{align*}
		or equivalently, \eqref{EQglued}.
	\end{proof}
	
	\subsection{Necessity results}
	\subsubsection{Necessity in weighted Lebesgue spaces}
	Here we present necessary conditions for \eqref{eqpittDLT} to hold, with $F$ given by \eqref{transform}. We consider the following assumptions on the weights $u,v$:
	$$
	u w^{q/a'}\in L^1_{\textrm{loc}}, \qquad v^{1-p'} s^{p'/a'}\in L^1_{\textrm{loc}}.
	$$
	\begin{theorem}\label{thmnecessity1}
		Let $1<p,q<\infty$ and $1\leq a\leq \infty$. Assume that  inequality \eqref{eqpittDLT} holds for every $f$, where
		$$
		Ff(y)=\int_0^\infty s(x) f(x)K(x,y)\, dx.
		$$
		\begin{enumerate}[label=(\roman{*})]
			\item If the kernel $K(x,y)$ satisfies
		\begin{equation}
		\label{kernel=1}
		K(x,y)\asymp 1, \qquad 0< xy\leq 1,
		\end{equation}
		then \eqref{sufficiencycond1} is valid.
		\item If the kernel $K(x,y)$ satisfies
				\begin{equation*}
				\label{kernel=atinfinity}
				K(x,y)\asymp (s(x)w(y))^{-1/2}, \qquad xy> 1,
				\end{equation*}
				then \eqref{sufficiencycond2} is valid.
	\end{enumerate}
	\end{theorem}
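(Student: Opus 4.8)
The plan is to prove each part by testing the assumed inequality \eqref{eqpittDLT} against a one‑parameter family of trial functions and then optimizing over that family by $L^p$–duality. For part (i), I would fix $r>0$ and take an arbitrary nonnegative $f$ supported in $(0,r)$. Since $x<r$ and $0<y<1/r$ force $0<xy\le 1$, hypothesis \eqref{kernel=1} gives $K(x,y)\asymp 1$ there; as the implied constants are positive and $s,f\ge 0$, the integrand is nonnegative and we obtain the pointwise lower bound
$$
|Ff(y)|=\int_0^r s(x)f(x)K(x,y)\,dx\gtrsim \int_0^r s(x)f(x)\,dx,\qquad 0<y<1/r.
$$
Retaining only the portion $y\in(0,1/r)$ of the norm on the left-hand side of \eqref{eqpittDLT} then yields, for every such $f$,
$$
\bigg(\int_0^{1/r}u(y)w(y)^{q/a'}\,dy\bigg)^{1/q}\int_0^r s(x)f(x)\,dx\lesssim \bigg(\int_0^r v(x)s(x)^{p/a}f(x)^p\,dx\bigg)^{1/p}.
$$

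Next I would optimize over $f$. Substituting $h=v^{1/p}s^{1/a}f$ turns the right-hand side into $\Vert h\Vert_{L^p(0,r)}$ and the linear functional $\int_0^r sf$ into $\int_0^r h\,s^{1/a'}v^{-1/p}\,dx$ (using $1-1/a=1/a'$); hence, taking the supremum over $f\ge 0$ and invoking the duality between $L^p(0,r)$ and $L^{p'}(0,r)$,
$$
\sup_{f}\frac{\int_0^r s(x)f(x)\,dx}{\Vert s^{1/a}f\Vert_{p,v}}=\bigg(\int_0^r s(x)^{p'/a'}v(x)^{1-p'}\,dx\bigg)^{1/p'},
$$
where I have also used $-p'/p=1-p'$ to rewrite the exponent of $v$. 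Combining this with the previous display and taking the supremum over all $r>0$ produces exactly \eqref{sufficiencycond1}. Part (ii) is entirely analogous, the only changes being the region and the kernel factor: I would fix $r>0$, take $f\ge 0$ supported in $(r,\infty)$, and note that $x>r$ and $y>1/r$ force $xy>1$, so the hypothesis $K(x,y)\asymp (s(x)w(y))^{-1/2}$ gives
$$
|Ff(y)|\gtrsim w(y)^{-1/2}\int_r^\infty s(x)^{1/2}f(x)\,dx,\qquad y>1/r.
$$
Keeping the range $y\in(1/r,\infty)$ in \eqref{eqpittDLT} and using $w(y)^{q/a'}w(y)^{-q/2}=w(y)^{q(1/a'-1/2)}$, the same duality computation — now with the extra weight $s^{1/2}$ in the functional, which shifts the exponent of $s$ from $1/a'$ to $1/a'-1/2$ — gives
$$
\sup_{f}\frac{\int_r^\infty s(x)^{1/2}f(x)\,dx}{\Vert s^{1/a}f\Vert_{p,v}}=\bigg(\int_r^\infty s(x)^{p'(1/a'-1/2)}v(x)^{1-p'}\,dx\bigg)^{1/p'},
$$
and taking the supremum over $r>0$ yields \eqref{sufficiencycond2}.

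The step requiring genuine care, rather than a deep obstacle, is making the duality argument rigorous: the formal extremizer need not satisfy the integrability condition \eqref{integrabilitycond} that guarantees $Ff$ is defined, and the $L^{p'}$ integrals above could a priori be infinite. I would handle this by working first with bounded, compactly supported $f$ (for which $Ff$ is certainly well defined and the pointwise lower bounds above are legitimate), so that the supremum over such $f$ already realizes the dual norm up to truncation; letting the supports exhaust $(0,r)$, respectively $(r,\infty)$, and applying the monotone convergence theorem then recovers the full integrals. Finally, if one of the dual integrals is infinite, the same trial functions force the left-hand side of \eqref{eqpittDLT} to blow up while the right-hand side stays bounded, so the assumed validity of \eqref{eqpittDLT} forces the corresponding first factor to vanish; under the usual convention $0\cdot\infty=0$ that value of $r$ then contributes nothing to the supremum, and the claimed condition holds trivially.
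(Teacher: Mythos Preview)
Your argument is correct and is essentially the same as the paper's, phrased via duality rather than by plugging in an explicit test function. The paper simply writes down the extremizer of your supremum directly: for part~(i) it sets $f_r(x)=v(x)^{1-p'}s(x)^{(1-p')(p/a-1)}\chi_{(0,r)}(x)$, computes $|Ff_r(y)|\asymp\int_0^r v^{1-p'}s^{p'/a'}$ for $y\le 1/r$ and $\Vert s^{1/a}f_r\Vert_{p,v}=\big(\int_0^r v^{1-p'}s^{p'/a'}\big)^{1/p}$, and divides; for part~(ii) it uses $f_r(x)=v(x)^{1-p'}s(x)^{p'(1/a'-1/2)-1/2}\chi_{(r,\infty)}(x)$. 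These are precisely the functions your $L^p$--$L^{p'}$ duality selects, so the two arguments are interchangeable; your version has the mild advantage of making transparent where the test function comes from and of discussing the admissibility and $0\cdot\infty$ issues that the paper handles tacitly via the standing local integrability assumptions $uw^{q/a'}\in L^1_{\mathrm{loc}}$ and $v^{1-p'}s^{p'/a'}\in L^1_{\mathrm{loc}}$.
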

	Note that Corollary~\ref{coriff} readily follows from Theorems~\ref{suffFtransforms}~and~\ref{thmnecessity1}.

	\begin{proof}[Proof of Theorem~\ref{thmnecessity1}]
		For the first part, let 
		$$
		f_r(x)=v(x)^{1-p'}s(x)^{(1-p')(p/a-1)}\chi_{(0,r)}(x), \qquad r>0.
		$$
		It follows from \eqref{kernel=1} and the equality $1+(1-p')(p/a-1)=p'/a'$ that for $y\leq 1/r$
		$$
		|Ff_r(y)| =\int_0^\infty f_r(x) K(x,y)s(x)\, dx \asymp \int_0^r v(x)^{1-p'}s(x)^{p'/a'}\, dx.		$$
		On the one hand, we have
		\begin{equation}
		\label{lowernormest}
		\big\Vert w^{1/a'} Ff\big\Vert_{q,u}\geq \bigg(\int_{0}^{1/r} u(y) w(y)^{q/a'}\, dy\bigg)^{1/q}\bigg(\int_0^r v(x)^{1-p'}s(x)^{p'/a'}\, dx\bigg),
		\end{equation}
		and on the other hand, 
		$$
		\big\Vert s^{1/a} f_r\big\Vert_{p,v}=\bigg(\int_0^r v(x)^{1-p'} s(x)^{p'/a'}\, dx\bigg)^{1/p}.
		$$
		Combining the latter equality with \eqref{eqpittDLT} and \eqref{lowernormest}, we derive
		$$
		\bigg(\int_{0}^{1/r} u(y) w(y)^{q/a'}\, dy\bigg)^{1/q}\bigg(\int_0^r v(x)^{1-p'}s(x)^{p'/a'}\, dx\bigg) \lesssim \bigg(\int_0^r v(x)^{1-p'}s(x)^{p'/a'}\, dx\bigg)^{1/p},
		$$
		i.e., \eqref{sufficiencycond1} holds.
		
		We omit the proof of the second part, as it is essentially a repetition of that of the first part. In this case, one should consider the function
		$$
		f_r(x)= v(x)^{1-p'}s(x)^{p'(1/a'-1/2)-1/2}\chi_{(r,\infty)}(x), \qquad r>0,
		$$
		and proceed analogously as above.
	\end{proof}
	The latter shows that condition \eqref{sufficiencycond1} is best possible for some classical transforms, such as the Hankel (or the cosine) transform, since $j_\alpha(xy)\asymp 1$ whenever $xy\leq 1$ for every $\alpha\geq -1/2$ (i.e., \eqref{kernel=1} is satisfied). 

\subsubsection{Necessity in weighted Lorentz spaces}
To conclude the part dealing with necessary conditions for \eqref{eqpittDLT}, we present a generalization of a result due to Benedetto and Heinig \cite[Theorem 2]{BH}, related to weighted Lorentz spaces (introduced in \cite{Lorentzspaces}; see also \cite{CRSlorentz}). We also refer the reader to \cite{BozaSoria,NT,sinnamonlorentz} for recent advances in the theory of Fourier inequalities in Lorentz spaces. 

In this part we do not present sufficiency conditions, as those rely on rearrangement inequalities that follow from Bessel's weighted inequality \eqref{besselineq} (cf. \cite{BH}), which we are not considering in this work.

Recall that for a measure space $(X,\mu)$ with $X\subset \R$ and $f$ a complex $\mu$-measurable function, we define the distribution function of $f$ as
$$
D_f(t)=\mu\{x\in X:|f(x)|>t\}, \quad t\in [0,\infty).
$$
Note that $D_f$ is nonnegative. Moreover, for $0<p<\infty$ (see, e.g., \cite{BSbook}),
$$
\int_X |f(x)|^p\, d\mu(x)=p\int_0^\infty t^{p-1}D_f(t)\,dt.
 $$
\begin{theorem}
Let $1<p,q<\infty$. Assume that the kernel $K(x,y)$ from \eqref{transform} satisfies $K(x,y)\asymp 1$ for $xy\leq 1$. If $u$ and $v$ are weights such that the inequality
\begin{equation}\label{benedettoeq1}
\bigg(\int_0^\infty (Ff)^*(y)^qu(y)\, dy\bigg)^{1/q}\leq C_0\bigg(\int_0^\infty f^*(x)^pv(x)\,dx\bigg)^{1/p}
\end{equation}
holds for every $f$, then
$$
 \bigg(\int_0^{1/r} u(y)\, dy\bigg)^{1/q}\bigg(\int_0^{r} v(x)\, dx\bigg)^{-1/p}\bigg( \int_0^r s(x)\, dx\bigg)\leq C, \quad r>0.
$$
\end{theorem}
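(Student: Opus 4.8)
The plan is to test the hypothesized inequality \eqref{benedettoeq1} against the simplest rearrangement-friendly function, namely the characteristic function $f=\chi_{(0,r)}$ for a fixed $r>0$. Its decreasing rearrangement is $f^*=\chi_{(0,r)}$, so the right-hand side of \eqref{benedettoeq1} is computed immediately as $\big(\int_0^r v(x)\,dx\big)^{1/p}$. The entire remaining task is to produce a good lower bound for the weighted $L^q$ norm of $(Ff)^*$ on the left-hand side.

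For this I would first extract a pointwise lower estimate for $Ff$ on the interval $(0,1/r)$. For $y\le 1/r$ and $x\in(0,r)$ one has $xy\le 1$, so the kernel hypothesis $K(x,y)\asymp 1$ applies (interpreting, exactly as in the proof of Theorem~\ref{thmnecessity1}, that $K$ is sign-definite on this region, so that no cancellation occurs when integrating against the nonnegative weight $s$). This gives
$$Ff(y)=\int_0^r s(x)K(x,y)\,dx \asymp \int_0^r s(x)\,dx=:S_r,\qquad 0<y\le 1/r,$$
so that $Ff$ is bounded below by a fixed multiple of $S_r$ on a set of measure $1/r$.

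The crucial step is then to transfer this estimate to the rearrangement. Writing $D_{Ff}$ for the distribution function, the bound above shows $D_{Ff}(t)\ge 1/r$ for every $t$ below the comparability constant times $S_r$; by the definition of the decreasing rearrangement this forces $(Ff)^*(y)\gtrsim S_r$ for all $y<1/r$. Consequently
$$\bigg(\int_0^\infty (Ff)^*(y)^q u(y)\,dy\bigg)^{1/q}\ge \bigg(\int_0^{1/r}(Ff)^*(y)^q u(y)\,dy\bigg)^{1/q}\gtrsim S_r\bigg(\int_0^{1/r} u(y)\,dy\bigg)^{1/q}.$$
Inserting this into \eqref{benedettoeq1} gives $S_r\big(\int_0^{1/r}u\big)^{1/q}\lesssim \big(\int_0^r v\big)^{1/p}$, and rearranging this inequality produces exactly the claimed bound, uniformly in $r$.

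The only genuinely delicate point is the rearrangement step: one must verify that a pointwise lower bound holding on a set of measure $1/r$ really does translate into a lower bound for $(Ff)^*$ on all of $(0,1/r)$, which is precisely where the distribution-function definition of $(\cdot)^*$ is invoked. The remainder is a direct substitution, and the reading of $K\asymp 1$ as sign-definite, needed to rule out cancellation in the kernel integral, is the same convention already used in Theorem~\ref{thmnecessity1}.
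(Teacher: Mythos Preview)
Your proposal is correct and follows essentially the same route as the paper: test against $f=\chi_{(0,r)}$, use $K(x,y)\asymp 1$ on $xy\le 1$ to bound $Ff$ below by a multiple of $\int_0^r s$ on $(0,1/r)$, and pass to the rearrangement via the distribution function. The only cosmetic difference is that the paper reaches the lower bound $(Ff)^*(y)\gtrsim S_r$ on $(0,1/r)$ through an explicit layer-cake computation with the weight $u$, whereas you invoke the definition of $(\cdot)^*$ directly; both arguments rest on the same observation that $D_{Ff}(t)\ge 1/r$ for $t$ below a fixed multiple of $S_r$.
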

	\begin{proof}The argument is similar to that of \cite[Theorem 2]{BH}. Let $f(x)=\chi_{(0,r)}(x)$. It is clear that $f^*=f$. Observe that the right hand side of \eqref{benedettoeq1} is equal to $C_0\big(\int_0^r v(x)\, dx\big)^{1/p}$, 	and moreover we have
		$$
		Ff(y)=\int_0^\infty s(x)f(x)K(x,y)\,dx =\int_0^r s(x)K(x,y)\, dx.
		$$
		If we denote $c=\min_{xy\leq 1}K(x,y)$, then by hypotheses $c>0$, and for $y\leq 1/r$, one has
		\begin{equation}\label{estbenedetto}
		Ff(y)>\frac{c}{2}\int_0^{r}s(x)\, dx=: A_r.
		\end{equation}
		For any $r>0$, the following estimate holds:
		\begin{align}
		\bigg( \int_0^\infty (Ff)^*(y)^qu(y)\, dy\bigg)^{1/q} &\geq \bigg( \int_0^{1/r} (Ff)^*(y)^qu(y)\, dy\bigg)^{1/q} \nonumber \\
		&= \bigg(q\int_0^\infty t^{q-1}\bigg(\int_{\{y\in(0,1/r):(Ff)^*(y)>t\}}u(y)\, dy\bigg)dt\bigg)^{1/q}\nonumber \\ 
		&=\bigg(q\int_0^\infty t^{q-1} \bigg(\int_0^{\min\{D_{Ff}(t),1/r\}} u(y)\, dy\bigg)dt\bigg)^{1/q}.\label{benedettoeq2}
		\end{align}
		where in the last step we have used that $\{y:(Ff)^* (y)>t\}=\{y:D_{Ff}(t)>y\}$. Also note that for $t<A_r$, \eqref{estbenedetto} implies
		$$
		(0,1/r)\subset \{ y>0 : Ff(y)>A_r\}\subset \{y>0: Ff(y)>t\}.
		$$
		Thus, for such choice of $t$,
		$$
		D_{Ff}(t)=\int_{\{y>0: |Ff(y)|>t\}}dz\geq \int_0^{1/r}dy =\frac{1}{r}.
		$$
		In view of the latter, we deduce that if $t<A_r$, then $\min\{D_{Ff}(t),1/r\}=1/r$. Combining such observation with \eqref{benedettoeq2}, we obtain
		\begin{align*}
		\bigg(\int_0^\infty (Ff)^*(y)^qu(y)\, dy\bigg)^{1/q}&\geq \bigg(q \int_0^{A_r} t^{q-1}\bigg(\int_0^{1/r}u(y)\, dy\bigg)dt\bigg)^{1/q} =A_r \bigg(\int_0^{1/r}u(y)\, dy\bigg)^{1/q}.
		\end{align*}
		Finally, it follows from \eqref{benedettoeq1} and the previous estimates that
		\begin{align*}
		\frac{c}{2} \bigg(\int_0^{1/r} u(y)\, dy\bigg)^{1/q}&\bigg(\int_0^{r} v(x)\, dx\bigg)^{-1/p}\bigg( \int_0^r s(x)\, dx\bigg)\\
		&\leq \bigg(\int_0^\infty (Ff)^*(y)^q u(y)\, dy\bigg)\bigg(\int_0^r v(x)\, dx\bigg)^{-1/p}\\
		&\leq C_0\bigg(\int_{0}^{r} v(x)\, dx\bigg)^{1/p}\bigg(\int_0^r v(x)\, dx\bigg)^{-1/p}= C_0,
		\end{align*}which establishes the assertion.
		\end{proof}

	\section{Weighted norm inequalities for transforms with power-type kernel}\label{S4}
	
	In what follows we assume $u(x)=x^{-\overline{\beta}'q}$, $v(x)=x^{\overline{\gamma}p}$ with $\beta_1-\gamma_1=\beta_2-\gamma_2$ and $s(x)=w(x)=x^\delta$, $\delta>0$ in \eqref{transform}. Piecewise power weights have been considered for the study of weighted restriction Fourier inequalities \cite{BS,DCGTrestr}, and moreover they play a fundamental role in the study of weighted norm inequalities for the Jacobi transform \cite{DLT} (see also \cite{jacobi}).
	
	For the sake of generality, we first give sufficient conditions for \eqref{pittpower} to hold, and then we also study necessary  conditions for \eqref{generalweightednorm} to hold, i.e., with non-mixed power weights.
	
	\subsection{Sufficient conditions}
		\begin{proof}[Proof of Theorem~\ref{theoremsuffpowerweights}]
		Let us verify that conditions \eqref{sufpowers} and \eqref{sufpowers2} imply \eqref{sufficiencycond1} and \eqref{sufficiencycond2} with $a=1$. On the one hand, it is clear that the integrals on the left hand side of \eqref{sufficiencycond1} converge if and only if
		$$
		\beta_2<\frac{1}{q} \qquad \text{and}\qquad \gamma_1<\frac{1}{p'}.
		$$
		On the other hand, the integrals on the left hand side \eqref{sufficiencycond2} converge if and only if
		$$
				\beta_1>\frac{1}{q}-\frac{\delta}{2} \qquad\text{and}\qquad  \gamma_2>\frac{1}{p'}-\frac{\delta}{2}.
		$$
		Notice that \eqref{sufpowers} and \eqref{sufpowers2} (along with $\beta_1-\gamma_1=\beta_2-\gamma_2$) imply that all the previous conditions hold. Now we proceed to verify that \eqref{sufficiencycond1} and \eqref{sufficiencycond2} hold. It suffices to check those conditions for $r< 1/2$ or $r> 2$. We check \eqref{sufficiencycond1} first. If $r<1/2$,
		\begin{align*}
		\bigg( \int_0^{1/r} u(y) \, dy\bigg)^{1/q}\bigg(\int_0^r v(x)^{1-p'} \, dx\bigg) ^{1/p'} & \asymp r^{-\gamma_1+1/p'} \bigg(C+\int_1^{1/r} y^{-\beta_1 q}\, dy\bigg)^{1/q}  \\ & \asymp r^{-\gamma_1+1/p'}\max\{ 1, r^{\beta_1-1/q}\} \\ &= \max\{ r^{-\gamma_1+1/p'},r^{\beta_1-\gamma_1+1/p'-1/q} \},
		\end{align*}
		which is uniformly bounded in $r<1/2$ if and only if
		\begin{equation}\label{mixedcond1}
		\beta_1-\gamma_1\geq 1/q-1/p'.
		\end{equation}
		If $r> 2$, 
		\begin{align*}
		\bigg( \int_0^{1/r} u(y) \, dy\bigg)^{1/q}\bigg(\int_0^r v(x)^{1-p'} \, dx\bigg) ^{1/p'} &\asymp r^{\beta_2-1/q}\bigg( C+\int_1^{r} x^{\gamma_2 p(1-p')}\, dx\bigg)^{1/p'} \\
		& \asymp r^{\beta_2-1/q} \max\{ 1, r^{-\gamma_2+1/p'}\} \\ &= \max\{ r^{\beta_2-1/q}, r^{ \beta_2-\gamma_2+1/p'-1/q}\}.
		\end{align*}
		The latter is uniformly bounded in $r>2$ if and only if
		\begin{equation}
		\label{mixedcond2}
		\beta_2-\gamma_2\leq 1/q-1/p'.
		\end{equation}
		The joint fulfilment conditions \eqref{mixedcond1} and \eqref{mixedcond2} together with $\beta_1-\gamma_1=\beta_2-\gamma_2$ is equivalent to \eqref{sufpowers}.
		
		Finally, we are left to verify \eqref{sufficiencycond2}. First, if $r<1/2$,
			\begin{align*}
			\bigg(\int_{1/r}^\infty u(y) y^{-q\delta/2}\, dy\bigg)^{1/q}\bigg( \int_r^\infty v(x)^{1-p'} x^{-p'\delta/2}\, dx\bigg)^{1/p'}&\asymp r^{\beta_1+\delta/2-1/q}\bigg(C+\int_r^1 x^{-p'(\gamma_1+ \delta/2)}\, dx\bigg)^{1/p'} \\ 
			&\asymp\max\{ x^{\beta_1+\delta/2-1/q},r^{\beta_1-\gamma_1+1/p'-1/q}\},					
			\end{align*}		
			which is uniformly bounded in $r<1/2$ if and only if \eqref{mixedcond1} holds. Secondly, for $r> 2$,
			\begin{align*}
			\bigg(\int_{1/r}^\infty u(y) y^{-q\delta/2}\, dy\bigg)^{1/q}\bigg( \int_r^\infty v(x)^{1-p'} x^{-p'\delta/2}\, dx\bigg)^{1/p'}&\asymp r^{-\gamma_2-\delta/2+1/p'}\bigg(C+\int_{1/r}^1 y^{-q(\beta_2+\delta/2)}\, dy\bigg)^{1/q} \\
			& \asymp \max\{ r^{-\gamma_2-\delta/2+1/p'},r^{\beta_2-\gamma_2+1/p'-1/q}\},
			\end{align*}
			and the latter is uniformly bounded in $r>2$ if and only if \eqref{mixedcond2} holds.
	\end{proof}
	
	Now we prove Corollary~\ref{corapplications}, which applies to  transforms with power-type kernel, and is equivalent to Theorem~\ref{theoremsuffpowerweights} with non-mixed power weights, as mentioned in the Introduction.

	\begin{proof}[Proof of Corollary~\ref{corapplications}]
		The proof is essentially based on changing variables in Theorem~\ref{theoremsuffpowerweights}. Let us define $d=c_1-c_2$ and
		$$
		\widetilde{K}(x,y)=x^{-b_1}y^{-c_1}K(x,y).
		$$
		Then it holds that
		$$
		|\widetilde{K}(x,y)|\lesssim \min\big\{1,(xy)^{-d}\big\}, \qquad d>0.
		$$
		We also define the auxiliary integral transform
		$$
		Gf(y)= \int_0^\infty x^{2d}f(x)\widetilde{K}(x,y)\, dx,
		$$
		which satisfies the hypotheses of Theorem~\ref{theoremsuffpowerweights} (with $\delta=2d$). Putting $g(x)=x^{b_0+b_1-2d}f(x)$, we have the following relation:
		$$
		y^{c_0+c_1}Fg(y)=Gf(y),
		$$
		and therefore, in virtue of Theorem~\ref{theoremsuffpowerweights}, the weighted norm inequality
		$$
		\Vert y^{-c_0-c_1-\beta'}Gf\Vert_q =\Vert y^{-\beta'} Fg \Vert_q \lesssim \Vert x^{\gamma'+2d} g\Vert_p = \Vert x^{\gamma' +b_0+b_1}f\Vert_p, \qquad 1<p\leq q<\infty,
		$$
		holds with $\beta'=\gamma'+1/q-1/p'$ and $1/q-d<\beta'<1/q$, or in other words, if we set $\beta=\beta'+c_0+c_1$ and $\gamma=\gamma'+b_0+b_1$, then inequality \eqref{generalweightednorm} holds if both conditions in  \eqref{generalweightedconds} are satisfied.
	\end{proof}

At this point we can already derive sufficient conditions for \eqref{generalweightednorm} to hold whenever $F$ is the sine, Hankel, or $\mathscr{H}_\alpha$ transform. To do this, we use the estimates \eqref{besselest}, \eqref{Hest}, and Corollary~\ref{corapplications} (recall that $|\sin xy|\leq \min\{xy,1\}$ for $x,y>0$). Those sufficient conditions are the following:
\begin{itemize}
	\item Sine transform: $\beta=\gamma+1/q-1/p'$, and
	$$
	\frac{1}{q}<\beta<1+\frac{1}{q}.
	$$
	\item Hankel transform of order $\alpha>-1/2$: $\beta=\gamma-2\alpha-1+1/q-1/p'$, and
	$$
	\frac{1}{q}-\alpha-\frac{1}{2}<\beta<\frac{1}{q}.
	$$
	\item $\mathscr{H}_\alpha$ transform of order $\alpha>-1/2$: $\beta=\gamma+1/q-1/p'$, and
	\begin{align*}
	\frac{1}{q}<\beta&<\frac{1}{q}+\alpha+\frac{3}{2}, &\text{if }\alpha<1/2,\\
	\frac{1}{q}+\alpha-\frac{1}{2}<\beta&<\frac{1}{q}+\alpha+\frac{3}{2}, &\text{if }\alpha\geq 1/2.
	\end{align*}
\end{itemize}
Note that the above conditions are not optimal in the case of the sine and Hankel transforms. For the sine transform, it is known \cite{DLT} that \eqref{generalweightednorm} holds if and only if $\beta=\gamma+1/q-1/p'$ and
$$
\max\bigg\{\frac{1}{q}-\frac{1}{p'},0\bigg\}\leq \beta<1+\frac{1}{q},
$$
and for the Hankel transform (of order $\alpha\geq -1/2$), \eqref{generalweightednorm} holds if and only if (see \cite{DC}) $\beta=\gamma-2\alpha-1+1/q-1/p'$ and
$$
\max\bigg\{ \frac{1}{q}-\frac{1}{p'},0 \bigg\} -\alpha-\frac{1}{2}\leq \beta<\frac{1}{q}.
$$

For the $\mathscr{H}_\alpha$ transform, Rooney proved \cite{RonCan} that \eqref{generalweightednorm} holds if $\beta=\gamma+1/q-1/p'$ and
\begin{align}
 \beta\geq \max\bigg\{ \frac{1}{q}-\frac{1}{p'},0\bigg\} \qquad \text{and}\qquad  \frac{1}{q}+\alpha-\frac{1}{2}<\beta&<\frac{1}{q}+\alpha+\frac{3}{2}, &\text{if }\alpha&<1/2, \nonumber \\
\frac{1}{q}+\alpha-\frac{1}{2}<\beta&<\frac{1}{q}+\alpha+\frac{3}{2}, &\text{if }\alpha&\geq 1/2.\label{suffHtrans}
\end{align}
Note that whenever $\alpha> 1/2$, the above sufficient conditions coincide with those given by Corollary~\ref{corapplications}, and moreover they are optimal, (see Theorem~\ref{ThmNecessitypowers} and Remark~\ref{rmksharpness} below).

	\subsection{Necessary conditions}
	Let us now study what conditions follow from \eqref{generalweightednorm}. The main result of this subsection goes along the same lines as Theorem~\ref{thmnecessity1}.
		\begin{theorem}
			\label{ThmNecessitypowers} Let $1<p\leq q<\infty$. Assume that inequality \eqref{generalweightednorm} holds for all $f$, with $F$ as in \eqref{powertrans}.
			\begin{enumerate}[label=(\roman{*})]
				\item If the kernel $K(x,y)$ satisfies
				$$
				K(x,y)\asymp x^{b_1}y^{c_1}, \qquad xy\leq 1, \quad b_1,c_1\in\R,
				$$
				then
				$$
				\beta=\gamma+c_0-b_0+c_1-b_1+\frac{1}{q}-\frac{1}{p'}, \qquad \beta<\frac{1}{q}+c_0+c_1.
				$$
				\item If the kernel $K(x,y)$ satisfies 
				$$
				K(x,y)\asymp x^{b_2}y^{c_2}, \qquad xy > 1, \quad b_2,c_2\in \R,
				$$
				then
				$$
				\beta=\gamma+c_0-b_0+c_2-b_2+\frac{1}{q}-\frac{1}{p'}, \qquad \beta >\frac{1}{q}+c_0+c_2.
				$$
			\end{enumerate}
		\end{theorem}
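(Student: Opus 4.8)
The plan is to follow the strategy of Theorem~\ref{thmnecessity1}, testing \eqref{generalweightednorm} against explicit nonnegative functions concentrated on the region where the prescribed kernel asymptotics are valid. For part (i), I would fix an exponent $\sigma$ large enough that all the integrals below converge, say $\sigma>\max\{-\gamma-1/p,\,-b_0-b_1-1\}$, and set $f_r(x)=x^\sigma\chi_{(0,r)}(x)$ for $r>0$. Whenever $y\leq 1/r$ and $x\in(0,r)$ one has $xy\leq 1$, so the hypothesis $K(x,y)\asymp x^{b_1}y^{c_1}$ applies, and since $f_r\geq 0$ the relation is a genuine two-sided bound:
$$
Ff_r(y)=y^{c_0}\int_0^r x^{b_0}f_r(x)K(x,y)\,dx\asymp y^{c_0+c_1}\int_0^r x^{b_0+b_1+\sigma}\,dx\asymp y^{c_0+c_1}r^{b_0+b_1+\sigma+1},\qquad y\leq 1/r.
$$
Restricting the left-hand norm of \eqref{generalweightednorm} to $(0,1/r)$ then yields
$$
\big\Vert y^{-\beta}Ff_r\big\Vert_q\gtrsim r^{b_0+b_1+\sigma+1}\bigg(\int_0^{1/r}y^{(c_0+c_1-\beta)q}\,dy\bigg)^{1/q}.
$$

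From this, combined with the elementary computation $\Vert x^\gamma f_r\Vert_p\asymp r^{\gamma+\sigma+1/p}$, I would read off the two conclusions. First, the $y$-integral converges near the origin precisely when $(c_0+c_1-\beta)q>-1$; if instead $\beta\geq 1/q+c_0+c_1$, the left-hand side of \eqref{generalweightednorm} would be infinite while the right-hand side is finite, a contradiction, which forces the strict bound $\beta<1/q+c_0+c_1$. Second, under this range condition the $y$-integral equals a constant times $r^{\beta-c_0-c_1-1/q}$, so \eqref{generalweightednorm} gives
$$
r^{\,b_0+b_1+\sigma+1+\beta-c_0-c_1-1/q}\lesssim r^{\,\gamma+\sigma+1/p}
$$
with implied constant independent of $r$. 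Since a power $r^{a}$ is bounded uniformly over all $r>0$ only when $a=0$, equating exponents and using $1-1/p=1/p'$ produces exactly $\beta=\gamma+c_0-b_0+c_1-b_1+1/q-1/p'$.

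Part (ii) is the mirror image. Here I would take $f_r(x)=x^\sigma\chi_{(r,\infty)}(x)$ with $\sigma$ small enough, $\sigma<\min\{-\gamma-1/p,\,-b_0-b_2-1\}$, so that the tail integrals converge, and work on $y\geq 1/r$, where $x>r$ forces $xy>1$ and the asymptotics $K(x,y)\asymp x^{b_2}y^{c_2}$ apply. The inner integral now behaves like $r^{b_0+b_2+\sigma+1}$, the range condition comes from convergence of $\int_{1/r}^\infty y^{(c_0+c_2-\beta)q}\,dy$ at infinity (equivalent to $\beta>1/q+c_0+c_2$), and exponent matching gives $\beta=\gamma+c_0-b_0+c_2-b_2+1/q-1/p'$.

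The only points needing care are that the estimate $K\asymp x^{b_j}y^{c_j}$ provides positive, two-sided control so that testing against nonnegative $f_r$ yields a legitimate \emph{lower} bound for $|Ff_r|$, and that $\sigma$ be chosen in the admissible range making every auxiliary integral finite and the inner integral nonzero; neither is a real obstacle. The substantive mechanism is that the scale invariance of piecewise-power weights turns the uniform-in-$r$ inequality into the exponent identity, while convergence of the outer $y$-integral pins down which side of the threshold $1/q+c_0+c_j$ the parameter $\beta$ must lie on.
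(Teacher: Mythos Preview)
Your proposal is correct and follows essentially the same approach as the paper: the paper also tests against power functions $f_r(x)=x^{-b_0-b_1+d}\chi_{(0,r)}(x)$ (respectively $x^{-b_0-b_2-d}\chi_{(r,\infty)}(x)$) with the same integrability constraints on the exponent, restricts the $q$-norm to $(0,1/r)$ (respectively $(1/r,\infty)$), and reads off both the range condition from convergence of the outer integral and the exponent identity from uniformity in $r$. Your remark that $K\asymp x^{b_j}y^{c_j}$ gives a two-sided bound, so that nonnegativity of $f_r$ yields a genuine lower estimate for $|Ff_r|$, is in fact stated more explicitly than in the paper.
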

		\begin{proof}
		For $r>0$, let $f_r(x)=x^{-b_0-b_1+d}\chi_{(0,r)}(x)$, where $d>-1$ is such that $\gamma-b_0-b_1+d>-1/p$ for a given $\gamma\in \R$. Then
		$$
		\Vert x^{\gamma}f_r\Vert_p =\bigg(\int_0^r x^{p(\gamma-b_0-b_1+d)}\, dx\bigg)^{1/p}\asymp r^{\gamma-b_0-b_1+d+1/p}.
		$$
		If $y\leq 1/r$, one has
		$$
		Ff_r(y)=y^{c_0}\int_0^r x^{-b_1+d}K(x,y)\, dx\asymp r^{d+1}y^{c_0+c_1},
		$$
		Then, it follows from inequality \eqref{generalweightednorm} and the finiteness of $\Vert x^\gamma f_r\Vert_p$ that
		\begin{align*}
		r^{\gamma-b_0-b_1+d+1/p}\asymp \Vert x^{\gamma}f_r\Vert_p  \gtrsim \Vert y^{-\beta}Ff_r\Vert_q & \geq  \bigg( \int_0^{1/r} y^{-\beta q}|Ff_r(y)|^q\, dy\bigg)^{1/q} \\ &\asymp r^{d+1}\bigg(\int_0^{1/r} y^{q(-\beta +c_0+c_1)}\, dy\bigg)^{1/q}\asymp r^{\beta-c_0-c_1-1/q+d+1},
		\end{align*}
		Note that the finiteness of the latter integral is equivalent to $\beta<1/q+c_0+c_1$. Moreover, the inequality $r^{\beta-c_0-c_1-1/q+d+1}\lesssim r^{\gamma-b_0-b_1+d+1/p}$ holds uniformly in $r>0$ if and only if $\beta=c_0-b_0+c_1-b_1+1/q-1/p'$. This completes the proof of the first part.
		
		The proof of the second part is omitted, as it is analogous to that of the first part. In this case one should use consider the function
		$$
		f_r(x)=x^{-b_0-b_2-d}\chi_{(r,\infty)}(x),
		$$
		where $d>1$ is such that $\gamma-b_0-b_2-d<-1/p$ for a given $\gamma\in \R$.
		\end{proof}
		\begin{remark}\label{rmksharpness}
		Note that if the kernel $K(x,y)$ of \eqref{powertrans} is such that
		$$
		K(x,y)\asymp\min\{x^{b_1}y^{c_1},x^{b_2}y^{c_2}\},
		$$
		with $b_1-b_2=c_1-c_2>0$, then the sufficient conditions of Corollary~\ref{corapplications} are also necessary. An example of a transform satisfying such property is the $\mathscr{H}_\alpha$ transform with $\alpha>1/2$ (cf. Remark~\ref{remarkHbigalpha}). This proves that Corollary~\ref{corapplications} is sharp, although in general it does not give the sharp sufficient conditions for inequality \eqref{generalweightednorm} to hold whenever $F$ is some of the aforementioned transforms, such as the sine transform.
		\end{remark}

	\section{Integral transforms with kernel represented by a power series and functions with vanishing moments}\label{SectionVanishingMoments}
	This section is motivated by the well-known result due to Sadosky and Wheeden \cite{SaWh}. They proved that the sufficient conditions \eqref{fouriersharprange} that guarantee Pitt's inequality (in one dimension) can be relaxed, provided that $f$ has vanishing moments. More precisely, one has:
		\begin{theoremA}\label{SaWhthm}
			Let $f$ be such that
			\begin{equation*}
			\label{vanishingmoment}
			\int_{-\infty}^{\infty} x^jf(x)\, dx = 0,\qquad  j=0,\ldots, n-1, \quad n\in \N.
			\end{equation*}
			Then, the weighted norm inequality
			\begin{equation}
			\label{FourierWNI}	
			\bigg(\int_{\R}|x|^{-\beta q}|\widehat{f}(x)|^q\, dx\bigg)^{1/q}\leq C\bigg(\int_{\R} |x|^{\gamma p}|f(x)|^p\,dx\bigg)^{1/p}
		\end{equation}
			holds with $\beta=\gamma +1/q-1/p'$ and
			$$
			\frac{1}{q}<\beta<n+\frac{1}{q}, \qquad \beta \neq \frac{1}{q} + j  , \,j=1,\ldots ,n-1.
			$$
		\end{theoremA}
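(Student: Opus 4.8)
The plan is to reduce the two-sided inequality \eqref{FourierWNI} to a pair of half-line estimates for the cosine and sine transforms, and then to use the vanishing moments to subtract Maclaurin polynomials from the kernels so that Corollary~\ref{corapplications} applies on successively higher sub-intervals of $\beta$. Since both weights are even, I would first split $f=f_e+f_o$ into its even and odd parts and write
\begin{equation*}
\widehat f(y)=2\int_0^\infty f_e(x)\cos(xy)\,dx+2i\int_0^\infty f_o(x)\sin(xy)\,dx=:2\,Cf_e(y)+2i\,Sf_o(y).
\end{equation*}
Because $|\widehat f(\pm y)|\le 2|Cf_e(y)|+2|Sf_o(y)|$ and (the weight being even) the norms $\Vert x^\gamma f_e\Vert_p,\Vert x^\gamma f_o\Vert_p$ are both controlled by $\bigl\Vert |x|^{\gamma} f\bigr\Vert_{L^p(\R)}$, inequality \eqref{FourierWNI} reduces, for $1<p\le q<\infty$, to the half-line estimates $\Vert y^{-\beta}Cf_e\Vert_q\lesssim\Vert x^\gamma f_e\Vert_p$ and $\Vert y^{-\beta}Sf_o\Vert_q\lesssim\Vert x^\gamma f_o\Vert_p$. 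The vanishing-moment hypothesis splits by parity: $\int_0^\infty x^{2j}f_e\,dx=0$ whenever $2j\le n-1$, and $\int_0^\infty x^{2j+1}f_o\,dx=0$ whenever $2j+1\le n-1$. (Heuristically one could instead subtract the first $m$ terms of $e^{it}$ to obtain a full-line kernel $\lesssim\min\{|t|^{m},|t|^{m-1}\}$ with width-one $\beta$-intervals; the even/odd splitting is precisely the device that brings the half-line Corollary~\ref{corapplications} to bear.)

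Next I would treat each transform separately. For the cosine transform, fix $M\ge1$ with $2M-2\le n-1$ and subtract the first $M$ terms of the Maclaurin series of $\cos$; by the available even moments this leaves $Cf_e$ unchanged, so $Cf_e(y)=\int_0^\infty f_e(x)R^{\cos}_M(xy)\,dx$ with $R^{\cos}_M(t)=\cos t-\sum_{k=0}^{M-1}(-1)^kt^{2k}/(2k)!$. Taylor's estimate at the origin together with the boundedness of $\cos$ gives $|R^{\cos}_M(t)|\lesssim\min\{|t|^{2M},|t|^{2M-2}\}$, a power-type kernel as in \eqref{kernelestcor} with $b_0=c_0=0$, $b_1=c_1=2M$, $b_2=c_2=2M-2$, so Corollary~\ref{corapplications} yields the cosine estimate with $\beta=\gamma+1/q-1/p'$ for $1/q+2M-2<\beta<1/q+2M$. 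Letting $M$ range over its admissible values, the cosine estimate holds for every $\beta>1/q$ within the moment budget except at the even shifts $\beta=1/q+2,1/q+4,\dots$. The sine transform is handled identically with $R^{\sin}_M(t)=\sin t-\sum_{k=0}^{M-1}(-1)^kt^{2k+1}/(2k+1)!$, for which $|R^{\sin}_M(t)|\lesssim\min\{|t|^{2M+1},|t|^{2M-1}\}$; here already $M=0$ is useful, since $\sin$ vanishes to first order and yields the free interval $1/q<\beta<1/q+1$, while general $M$ covers $1/q+2M-1<\beta<1/q+2M+1$, so the sine estimate holds for every admissible $\beta>1/q$ except at the odd shifts $\beta=1/q+1,1/q+3,\dots$.

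Finally I would combine the two halves: \eqref{FourierWNI} holds at a given $\beta$ exactly when both the cosine and the sine estimates do, so the admissible set is the intersection of ``every $\beta>1/q$ except even shifts'' with ``every $\beta>1/q$ except odd shifts'', i.e.\ $\beta>1/q$ with all integer shifts $\beta=1/q+j$, $j\ge1$, removed, and $\beta=\gamma+1/q-1/p'$ throughout (this relation falls out of every application of Corollary~\ref{corapplications}). The main obstacle is the endpoint bookkeeping that produces the sharp cutoff $\beta<n+1/q$: one must check, separately for $n$ even and $n$ odd, that the even-moment budget carries the cosine estimate up to $1/q+n$ (resp.\ $1/q+n+1$) while the odd-moment budget carries the sine estimate up to $1/q+n+1$ (resp.\ $1/q+n$), so that the binding upper constraint is always $1/q+n$ and the excluded integer shifts inside the range are exactly $j=1,\dots,n-1$. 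Together with verifying the two remainder estimates and the legitimacy of subtracting the Maclaurin polynomials (which requires $x^jf\in L^1(\R)$ for $j\le n-1$, implicit in the hypothesis), this completes the argument.
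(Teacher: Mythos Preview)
The paper does not itself prove Theorem~\ref{SaWhthm}; it is quoted from Sadosky--Wheeden as motivation for the paper's own Theorem~\ref{thmgeneralvanishingmoments}, which is a half-line generalisation. Your argument is nonetheless correct, and it is essentially the paper's proof of Theorem~\ref{thmgeneralvanishingmoments} specialised to the Fourier setting: subtracting the initial terms of the kernel's power series using the vanishing moments and then applying Corollary~\ref{corapplications} to the remainder is exactly the mechanism used there. The one extra ingredient you need and supply is the even/odd decomposition $f=f_e+f_o$, which reduces the full-line transform to the cosine and sine transforms on $(0,\infty)$, where the paper's half-line machinery applies; this is what forces the step $k=2$ in each remainder and produces separate even and odd excluded points, whose intersection is $\{1/q+j:1\le j\le n-1\}$ with common upper cutoff $1/q+n$, just as your endpoint bookkeeping shows. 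One small slip: for $M=0$ the sine ``remainder'' is $\sin t$ itself, and the correct bound is $|\sin t|\le\min\{|t|,1\}$ rather than $\min\{|t|,|t|^{-1}\}$; Corollary~\ref{corapplications} with $b_1=c_1=1$, $b_2=c_2=0$ still gives the interval $(1/q,1/q+1)$, so your conclusion is unaffected.
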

	\subsection{Main results}
Following the idea of Sadosky and Wheeden, here we obtain an analogous statement to Theorem~\ref{SaWhthm} for transforms with kernels represented by power series. As examples, we mention the sine, Hankel, and $\mathscr{H}_\alpha$ transforms. The generalization of Theorem~\ref{SaWhthm} reads as follows:
\begin{theorem}\label{thmgeneralvanishingmoments}
	Let $1<p\leq q<\infty$ and let the integral transform $F$ be as in \eqref{powertrans}. Let
	\begin{equation}
	\label{kernelseries}
	K(x,y)=x^{b_1}y^{c_1}\sum_{m=0}^\infty a_m (xy)^{km}, \qquad k\in \N, \qquad a_m\in \C, \qquad b_1,c_1\in \R, \qquad x,y>0,
	\end{equation}
	with $\sum_{m=0}^\infty |a_k|=A<\infty$. Assume the series defining $K$ converges for every $x,y>0$, and moreover $|K(x,y)| \lesssim x^{b_2}y^{c_2}$ for $xy>1$, where $b_2,c_2\in \R$, and $c_1-c_2=b_1-b_2\geq 0$. If $f$ is such that
	\begin{equation}
	\label{generalvanishingmoments}
	\int_0^\infty x^{b_0+b_1+\ell k}f(x)\, dx=0, \qquad \ell=0,\ldots , n-1, \qquad n\in \N,
	\end{equation}
	then the inequality $\Vert y^{-\beta}Ff\Vert_q\leq C\Vert x^{\gamma}f\Vert_p$ holds with
	\begin{equation*}
	\label{generalvanishingmomentscond}
	\beta=\gamma+c_0-b_0+c_1-b_1+\frac{1}{q}-\frac{1}{p'}, \qquad \frac{1}{q}+c_0+c_1<\beta<\frac{1}{q}+c_0+c_1+n\ell,
	\end{equation*}
	and $\beta\neq 1/q+c_0+c_1+jk$, $j=1,\ldots , n-1$.
\end{theorem}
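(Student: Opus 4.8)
The plan is to reduce the statement to Corollary~\ref{corapplications} by subtracting from the kernel a finite number of terms of its power-series expansion, the number being dictated by the position of $\beta$ inside the admissible range. Fix $\beta$ with $\frac1q+c_0+c_1<\beta<\frac1q+c_0+c_1+nk$ and $\beta\ne\frac1q+c_0+c_1+jk$; then there is a unique $j\in\{0,\dots,n-1\}$ with
$$
\frac1q+c_0+c_1+jk<\beta<\frac1q+c_0+c_1+(j+1)k.
$$
Writing $d=c_1-c_2=b_1-b_2\ge0$, I would introduce the truncated kernel
$$
\widetilde K_j(x,y)=K(x,y)-x^{b_1}y^{c_1}\sum_{m=0}^{j}a_m(xy)^{km}=x^{b_1}y^{c_1}\sum_{m=j+1}^\infty a_m(xy)^{km},
$$
together with the transform $\widetilde F_j f(y)=y^{c_0}\int_0^\infty x^{b_0}f(x)\widetilde K_j(x,y)\,dx$, and show that $Ff=\widetilde F_jf$ while $\widetilde K_j$ fits the hypotheses of Corollary~\ref{corapplications} on precisely the chosen window.

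First I would record the estimates on $\widetilde K_j$. For $xy\le1$, since $(xy)^{k(m-j-1)}\le1$ for $m\ge j+1$, the remaining tail is bounded by $A(xy)^{k(j+1)}$, whence $|\widetilde K_j(x,y)|\lesssim x^{\tilde b_1}y^{\tilde c_1}$ with $\tilde b_1=b_1+k(j+1)$, $\tilde c_1=c_1+k(j+1)$. For $xy>1$, the hypothesis $|K(x,y)|\lesssim x^{b_2}y^{c_2}$ together with the bound $A\,x^{b_1+kj}y^{c_1+kj}$ for the subtracted polynomial (whose top power is $m=j$) give $|\widetilde K_j(x,y)|\lesssim x^{\tilde b_2}y^{\tilde c_2}$ with $\tilde b_2=b_1+kj$, $\tilde c_2=c_1+kj$, where $d\ge0$ is used to absorb $x^{b_2}y^{c_2}=x^{\tilde b_2}y^{\tilde c_2}(xy)^{-d-kj}\le x^{\tilde b_2}y^{\tilde c_2}$. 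As $\tilde b_1-\tilde b_2=\tilde c_1-\tilde c_2=k>0$, the quantity $\min\{x^{\tilde b_1}y^{\tilde c_1},x^{\tilde b_2}y^{\tilde c_2}\}$ equals $x^{\tilde b_1}y^{\tilde c_1}$ exactly for $xy\le1$, so $\widetilde K_j$ satisfies \eqref{kernelestcor} with these exponents. Corollary~\ref{corapplications} then yields $\Vert y^{-\beta}\widetilde F_jf\Vert_q\lesssim\Vert x^\gamma f\Vert_p$ on $\frac1q+c_0+\tilde c_2<\beta<\frac1q+c_0+\tilde c_1$, i.e.\ on our window, with exponent relation $\beta=\gamma+c_0-b_0+(\tilde c_1-\tilde b_1)+\frac1q-\frac1{p'}=\gamma+c_0-b_0+c_1-b_1+\frac1q-\frac1{p'}$, which is independent of $j$ and matches the claim.

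It remains to identify $\widetilde F_jf$ with $Ff$. Since $j\le n-1$, for each $m\in\{0,\dots,j\}$ hypothesis \eqref{generalvanishingmoments} (with $\ell=m$) gives $\int_0^\infty x^{b_0+b_1+km}f(x)\,dx=0$, so the subtracted terms contribute $\sum_{m=0}^j a_m y^{c_0+c_1+km}\int_0^\infty x^{b_0+b_1+km}f(x)\,dx=0$, whence $Ff=\widetilde F_jf$. Taking the union over $j=0,\dots,n-1$ of the windows then exhausts $\bigl(\frac1q+c_0+c_1,\frac1q+c_0+c_1+nk\bigr)$ minus the points $\frac1q+c_0+c_1+jk$, $j=1,\dots,n-1$, which is exactly the asserted range.

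I expect the main obstacle to be the rigorous justification of $Ff=\widetilde F_jf$: splitting $\int_0^\infty x^{b_0}f(x)K(x,y)\,dx$ into the tail integral against $\widetilde K_j$ plus the finitely many moment integrals requires each piece to converge in a controlled way. Near the origin the improved bound $|\widetilde K_j|\lesssim x^{\tilde b_1}y^{\tilde c_1}$ strengthens integrability, but at infinity the moment integrals $\int_0^\infty x^{b_0+b_1+km}f(x)\,dx$ need not converge absolutely (one has $b_1+km\ge b_2$, so the integrand can dominate the standing weight $x^{b_0+b_2}|f|$) and converge only in the conditional sense supplied by \eqref{generalvanishingmoments}. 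I would therefore either split at $x=1/y$ and convert the low-order pieces via $\int_0^{1/y}=\int_0^\infty-\int_{1/y}^\infty$ under the natural absolute-integrability assumptions carried by \eqref{powertrans}, or first establish the identity for a dense class of $f$ (e.g.\ compactly supported away from $0$ and $\infty$) and pass to the limit using the weighted bounds, which is legitimate whenever $\Vert x^\gamma f\Vert_p<\infty$.
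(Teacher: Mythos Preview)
Your proposal is correct and follows essentially the same approach as the paper: the paper defines $G_\ell(x,y)=x^{-b_1}y^{-c_1}K(x,y)-\sum_{m=0}^{\ell-1}a_m(xy)^{km}$ (which is your $\widetilde K_j$ with $j=\ell-1$, up to the factored-out $x^{b_1}y^{c_1}$), derives the same two-sided bound $|G_\ell|\lesssim\min\{(xy)^{k\ell},(xy)^{k(\ell-1)}\}$, and applies Corollary~\ref{corapplications} to obtain each window. Your discussion of the convergence issue in identifying $Ff$ with $\widetilde F_jf$ is a fair caveat; the paper treats this step formally as well.
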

\begin{proof}First of all note that since $\sum|a_m|<\infty$, one has $|K(x,y)|\lesssim x^{b_1}y^{c_1}$ whenever $xy\leq 1$.
	
	By \eqref{generalvanishingmoments}, we can write, for any $\ell=1,\ldots ,n$,
	$$
	Ff(y)=y^{c_0+c_1}\int_0^\infty x^{b_0+b_1}f(x)\bigg(x^{-b_1}y^{-c_1}K(x,y)-\sum_{m=0}^{\ell-1} a_m (xy)^{km}\bigg)dx
	$$
	If we define 
	$$
	G_\ell(x,y)=x^{-b_1}y^{-c_1}K(x,y)-\sum_{m=0}^{\ell-1} a_m (xy)^{km}=\sum_{m=\ell}^\infty a_m (xy)^{km},
	$$
	then it is clear that for $xy\leq 1$ one has
	$$
	|G_\ell(x,y)|\leq A(xy)^{k\ell}.
	$$ 
	For $xy>1$, since $x^{-b_1}y^{-c_1}|K(x,y)|\lesssim (xy)^{c_2-c_1}$ and $c_2-c_1\leq 0$, it is also clear that $|G_\ell(x,y)|\lesssim (xy)^{k(\ell-1)}$. In conclusion,
	$$
	|G_\ell (x,y)|\lesssim \begin{cases} (xy)^{k\ell},& xy\leq 1,\\
	(xy)^{k(\ell-1)}, &xy>1,
	\end{cases}
	$$
	or equivalently,
	$$|G_\ell(x,y)|\lesssim \min\big\{(xy)^{k\ell},(xy)^{k(\ell-1)}\big\}.$$
	Hence, by Corollary~\ref{corapplications}, the transform defined as
	$$
	\mathcal{G}_\ell g(y)=y^{c_0+c_1}\int_0^\infty x^{b_0+b_1}f(x) G_\ell(x,y)\, dx
	$$
	satisfies the inequality
	$$
	\Vert y^{-\beta}\mathcal{G}_\ell g\Vert_q\lesssim \Vert x^\gamma g\Vert_p,
	$$
	provided that $\beta=\gamma+c_0-b_0+c_1-b_1+1/q-1/p'$ and
	$$
	\frac{1}{q}+c_0+c_1+k(\ell-1)<\beta<\frac{1}{q}+c_0+c_1+k\ell.
	$$
	Since the latter holds for every $\ell=1,\ldots , n$, our assertion follows.
\end{proof}

In general Theorem~\ref{thmgeneralvanishingmoments} is not true whenever $\beta=1/q+c_0+c_1+jk$ for some $j\in \{0,1,\ldots , n\}$,  as shown in the case of the Fourier transform \cite{SaWh}.

\begin{remark}In contrast with Corollary~\ref{corapplications}, in Theorem~\ref{thmgeneralvanishingmoments} we can allow $b_1=b_2$, $c_1=c_2$. This is because in order to prove Theorem~\ref{thmgeneralvanishingmoments} we apply Corollary~\ref{corapplications} to the transform $\mathcal{G}_\ell$, whose kernel satisfies $|G_\ell(x,y)|\lesssim \min\big\{(xy)^{k\ell},(xy)^{k(\ell-1)}\big\}$, thus it always satisfies the hypothesis of Corollary~\ref{corapplications}, namely $b_1=c_1=k\ell>k(\ell-1)=b_2=c_2$.
\end{remark}

A similar result to Theorem~\ref{thmgeneralvanishingmoments} also holds for the Fourier transform on $\R^n$, cf. \cite[Theorem 2]{SaWh}. See also \cite{lebrun}, where a similar problem with nonradial weights is considered.

For the Hankel transform of order $\alpha\geq -1/2$, we have the representation of $j_\alpha(xy)$ by power series \eqref{eqSeriesBessel}. Thus, on applying Theorem~\ref{thmgeneralvanishingmoments} with $b_1=c_0=c_1=0$, $b_0=2\alpha+1$ and $k=2$, we obtain the following:
\begin{corollary}\label{corhankelvanishing}
	Let $1< p\leq q<\infty$ and let $f$ be such that
	\begin{equation*}
	\label{vanishingmomenthankel}
	\int_0^\infty x^{2\alpha+1+2\ell} f(x)\, dx =0, \qquad \ell=0,\ldots , n-1, \quad n\in \N.
	\end{equation*}
	Then the inequality
	\begin{equation*}
	\label{pittvanishinghankel}
	\Vert y^{-\beta} H_\alpha f\Vert_q \leq C\Vert x^{\gamma}f\Vert_p
	\end{equation*}
	holds if $\beta=\gamma-2\alpha-1+1/q-1/p'$ and
	$$
	\frac{1}{q}<\beta < \frac{1}{q}+2n, \qquad \beta \neq \frac{1}{q}+2\ell,\, \ell=1,\ldots , n-1.
	$$
\end{corollary}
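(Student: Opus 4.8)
The plan is to derive Corollary~\ref{corhankelvanishing} as a direct specialization of Theorem~\ref{thmgeneralvanishingmoments}, so the main task is simply to verify that the Hankel transform fits the hypotheses of that theorem with the correct choice of parameters. First I would recall from \eqref{defHankel} that
\[
H_\alpha f(y)=\int_0^\infty x^{2\alpha+1}f(x)\,j_\alpha(xy)\,dx,
\]
and from the series expansion \eqref{eqSeriesBessel} that the kernel is
\[
j_\alpha(xy)=\Gamma(\alpha+1)\sum_{k=0}^\infty \frac{(-1)^k (xy/2)^{2k}}{k!\,\Gamma(\alpha+k+1)}.
\]
Matching this against the general kernel form \eqref{kernelseries}, namely $K(x,y)=x^{b_1}y^{c_1}\sum_{m=0}^\infty a_m (xy)^{km}$, I would read off the parameters $b_1=c_1=0$, $c_0=0$, $b_0=2\alpha+1$, and $k=2$, with coefficients $a_m=\Gamma(\alpha+1)(-1)^m 2^{-2m}/(m!\,\Gamma(\alpha+m+1))$.

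Next I would check the quantitative hypotheses of Theorem~\ref{thmgeneralvanishingmoments}. The absolute summability $\sum_{m=0}^\infty |a_m|=A<\infty$ follows at once, since the series for $j_\alpha$ is entire and the coefficients decay faster than geometrically; in fact $\sum_m |a_m|$ is the value of a convergent hypergeometric-type series. Convergence of the defining series for every $x,y>0$ is likewise immediate. For the tail estimate, the Bessel asymptotics \eqref{besselest} give $|j_\alpha(xy)|\lesssim (xy)^{-\alpha-1/2}$ for $xy>1$, so that $|K(x,y)|\lesssim x^{b_2}y^{c_2}$ holds with $b_2=c_2=-\alpha-1/2$. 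I would then confirm the structural compatibility condition $c_1-c_2=b_1-b_2=\alpha+1/2\geq 0$, which is valid precisely because $\alpha\geq -1/2$.

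With all hypotheses verified, the vanishing-moment condition \eqref{generalvanishingmoments} becomes $\int_0^\infty x^{b_0+b_1+\ell k}f(x)\,dx=\int_0^\infty x^{2\alpha+1+2\ell}f(x)\,dx=0$ for $\ell=0,\ldots,n-1$, exactly the stated hypothesis. Substituting the chosen parameters into the conclusion of Theorem~\ref{thmgeneralvanishingmoments}, the balance relation $\beta=\gamma+c_0-b_0+c_1-b_1+1/q-1/p'$ collapses to $\beta=\gamma-2\alpha-1+1/q-1/p'$, and the admissible range $1/q+c_0+c_1<\beta<1/q+c_0+c_1+nk$ with exclusions at $\beta=1/q+c_0+c_1+jk$ becomes $1/q<\beta<1/q+2n$ with $\beta\neq 1/q+2\ell$, $\ell=1,\ldots,n-1$, as claimed. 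I anticipate no genuine obstacle here: the only point requiring mild care is the bookkeeping of parameter substitutions and ensuring the sign condition $\alpha\geq -1/2$ is what guarantees $c_1-c_2=b_1-b_2\geq 0$, so that Corollary~\ref{corapplications} (invoked inside the proof of Theorem~\ref{thmgeneralvanishingmoments}) applies to each truncated transform $\mathcal{G}_\ell$.
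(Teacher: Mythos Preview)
Your proposal is correct and follows exactly the paper's approach: the paper derives Corollary~\ref{corhankelvanishing} by applying Theorem~\ref{thmgeneralvanishingmoments} with $b_1=c_0=c_1=0$, $b_0=2\alpha+1$, and $k=2$, which is precisely what you do, only with the verification of the hypotheses spelled out in more detail. Your observation that the condition $c_1-c_2=b_1-b_2=\alpha+1/2\geq 0$ is what forces $\alpha\geq -1/2$ is accurate and matches the paper's standing assumption stated just before the corollary.
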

\begin{remark}
	Let us compare Theorem~\ref{SaWhthm} and Corollary~\ref{corhankelvanishing}. It is known \cite{SaWh} that if $\int_{\R} f(x)\, dx=0$, then inequality \eqref{FourierWNI} does not necessarily hold for $\beta=1+1/q$. However, it follows from  Corollary~\ref{corhankelvanishing} with $\alpha=-1/2$ (i.e., the cosine transform) that \textit{if $\int_\R f(x)\, dx=0$ and moreover $f$ is even, then inequality \eqref{FourierWNI} holds for $\beta=1+1/q$}.
\end{remark}

Let us now state a version of Theorem~\ref{thmgeneralvanishingmoments} for the sine transform. Since
$$
\sin xy= xy\sum_{m=0}^\infty \frac{(-1)^m}{(2m+1)!}(xy)^{2m}, 
$$
Theorem~\ref{thmgeneralvanishingmoments} with $b_0=c_0=0$, $b_1=c_1=1$ and $k=2$ yields the following:
\begin{corollary}
Let $1< p\leq q<\infty$ and let $f$ be such that
\begin{equation*}
\int_0^\infty x^{2\ell+1} f(x)\, dx =0, \qquad \ell=0,\ldots , n-1, \quad n\in \N.
\end{equation*}
Then the inequality
\begin{equation*}
\Vert y^{-\beta} \widehat{f}_{\sin}\Vert_q \leq C\Vert x^{\gamma}f\Vert_p
\end{equation*}
holds if $\beta=\gamma+1/q-1/p'$ and
$$
\frac{1}{q}+1<\beta < \frac{1}{q}+2n+1, \qquad \beta \neq \frac{1}{q}+2\ell+1,\, \ell=1,\ldots , n-1.
$$	
\end{corollary}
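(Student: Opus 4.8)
The plan is to recognize the sine transform as a special case of a power-series kernel transform and to invoke Theorem~\ref{thmgeneralvanishingmoments} directly; the whole argument then reduces to matching parameters and checking hypotheses. First I would write $\widehat{f}_{\sin}(y)=\int_0^\infty f(x)\sin(xy)\,dx$ in the form \eqref{powertrans}, which forces $b_0=c_0=0$, since there is neither an outer power of $y$ nor an inner weight $x^{b_0}$. Using the expansion $\sin(xy)=xy\sum_{m=0}^\infty \frac{(-1)^m}{(2m+1)!}(xy)^{2m}$ and comparing with \eqref{kernelseries}, I would read off the remaining parameters: $b_1=c_1=1$, $k=2$, and $a_m=(-1)^m/(2m+1)!$.

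Next I would verify the three hypotheses of Theorem~\ref{thmgeneralvanishingmoments}. The summability holds, as $\sum_{m=0}^\infty|a_m|=\sum_{m=0}^\infty\frac{1}{(2m+1)!}=\sinh 1<\infty$, and since $\sin$ is entire the series converges for all $x,y>0$. For the growth at infinity, the elementary bound $|\sin(xy)|\le 1$ gives $|K(x,y)|\le 1=x^0y^0$ for $xy>1$, so $b_2=c_2=0$; consequently $c_1-c_2=b_1-b_2=1\ge 0$, as required. Finally, the moment condition \eqref{generalvanishingmoments} reads $\int_0^\infty x^{b_0+b_1+\ell k}f(x)\,dx=\int_0^\infty x^{1+2\ell}f(x)\,dx=0$ for $\ell=0,\dots,n-1$, which is exactly the assumption imposed on $f$ in the statement.

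Finally I would substitute these values into the conclusion of Theorem~\ref{thmgeneralvanishingmoments}. The balance relation $\beta=\gamma+c_0-b_0+c_1-b_1+1/q-1/p'$ collapses to $\beta=\gamma+1/q-1/p'$, while the admissible range $1/q+c_0+c_1<\beta<1/q+c_0+c_1+kn$, together with the excluded points $\beta\neq 1/q+c_0+c_1+jk$ for $j=1,\dots,n-1$, becomes $1/q+1<\beta<1/q+2n+1$ with $\beta\neq 1/q+2\ell+1$, $\ell=1,\dots,n-1$. This is precisely the claimed statement, so the corollary follows at once.

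I do not expect a genuine obstacle here: the result is a direct specialization of the master theorem, and no new estimate is needed beyond the trivial bound $|\sin|\le 1$. The only point demanding care is the index bookkeeping, namely translating the exponent $b_0+b_1+\ell k$ in the moment condition and the shift $c_0+c_1$ in the range so that the lower endpoint, the upper endpoint, and the discrete set of excluded exponents all line up correctly with the values $b_0=c_0=0$, $b_1=c_1=1$, $k=2$ of the sine kernel.
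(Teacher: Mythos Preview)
Your proposal is correct and matches the paper's approach exactly: the paper simply states that the corollary follows from Theorem~\ref{thmgeneralvanishingmoments} with $b_0=c_0=0$, $b_1=c_1=1$, and $k=2$, and you have carried out precisely this specialization, additionally verifying the summability $\sum|a_m|<\infty$, the bound $|\sin(xy)|\le 1$ giving $b_2=c_2=0$, and the matching of the moment condition and the range of $\beta$.
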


Finally, we present the statement corresponding to the $\mathscr{H}_\alpha$ transform. In view of \eqref{Hseries} and \eqref{Hest}, we apply Theorem~\ref{thmgeneralvanishingmoments} with $b_0=c_0=1/2$, $b_1=c_1=\alpha+1$ and $k=2$.
\begin{corollary}
	Let $1<p\leq q<\infty$ and $\alpha>-1/2$. Let $f$ be such that
	$$
	\int_0^\infty x^{\alpha+3/2+2 \ell}f(x)\, dx=0, \qquad \ell=0,\ldots , n-1, \quad n\in \N.
	$$
	Then the inequality
	$$
	\Vert y^{-\beta}\mathscr{H}_\alpha f\Vert_q\leq C\Vert x^{\gamma} f\Vert_p
	$$
	holds if $\beta=\gamma+1/q-1/p'$ and
	$$
	\frac{1}{q}+\alpha+\frac{3}{2}<\beta<\frac{1}{q}+\alpha+\frac{3}{2}+2n, \qquad \beta\neq \frac{1}{q}+\alpha+\frac{3}{2}+2\ell, \, \ell=1,\ldots , n-1.
	$$
\end{corollary}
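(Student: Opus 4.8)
The plan is to recognize the $\mathscr{H}_\alpha$ transform as a transform with power-type kernel in exactly the form required by Theorem~\ref{thmgeneralvanishingmoments}, and then read off the conclusion. Writing $(xy)^{1/2}=x^{1/2}y^{1/2}$, the transform \eqref{defHtrans} is of the form \eqref{powertrans} with $c_0=b_0=1/2$ and kernel $K(x,y)=\mathbf{H}_\alpha(xy)$. Factoring out $(xy/2)^{\alpha+1}$ in the Struve series \eqref{Hseries} gives
\begin{equation*}
K(x,y)=\mathbf{H}_\alpha(xy)=x^{\alpha+1}y^{\alpha+1}\sum_{m=0}^\infty a_m(xy)^{2m}, \qquad a_m=\frac{(-1)^m\,2^{-\alpha-1-2m}}{\Gamma(m+3/2)\Gamma(m+\alpha+3/2)},
\end{equation*}
which is precisely \eqref{kernelseries} with $b_1=c_1=\alpha+1$ and $k=2$.

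Next I would verify the three standing hypotheses of Theorem~\ref{thmgeneralvanishingmoments}. Since $\mathbf{H}_\alpha$ is entire, the series converges for every $x,y>0$, and the rapid growth of the two Gamma factors yields $\sum_{m\geq 0}|a_m|=A<\infty$. The remaining requirement is the tail bound $|K(x,y)|\lesssim x^{b_2}y^{c_2}$ for $xy>1$ with $c_1-c_2=b_1-b_2\geq 0$, which is supplied by the asymptotic estimate \eqref{Hest}. Here one splits into two regimes: for $\alpha<1/2$ take $b_2=c_2=-1/2$, so that $c_1-c_2=b_1-b_2=\alpha+3/2>0$; for $\alpha\geq 1/2$ take $b_2=c_2=\alpha-1$, so that $c_1-c_2=b_1-b_2=2>0$. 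In both cases $c_1-c_2=b_1-b_2\geq 0$ holds (using $\alpha>-1/2$), and crucially the conclusion of Theorem~\ref{thmgeneralvanishingmoments} involves only $c_0,c_1,k,n$ and not $b_2,c_2$, so the two cases produce the same final range.

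Finally I would translate the conclusion. With $b_0=1/2$, $b_1=\alpha+1$, $k=2$, the moment exponents in \eqref{generalvanishingmoments} become $b_0+b_1+\ell k=\alpha+3/2+2\ell$, matching the vanishing-moment hypothesis of the corollary. Because $c_0=b_0$ and $c_1=b_1$, the relation $\beta=\gamma+c_0-b_0+c_1-b_1+1/q-1/p'$ collapses to $\beta=\gamma+1/q-1/p'$, and the admissible interval $\frac{1}{q}+c_0+c_1<\beta<\frac{1}{q}+c_0+c_1+nk$ with exclusions at $\frac{1}{q}+c_0+c_1+jk$ becomes exactly $\frac{1}{q}+\alpha+\frac{3}{2}<\beta<\frac{1}{q}+\alpha+\frac{3}{2}+2n$ with $\beta\neq\frac{1}{q}+\alpha+\frac{3}{2}+2\ell$. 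I do not expect a genuine obstacle: the substance is entirely contained in Theorem~\ref{thmgeneralvanishingmoments}, and the only point requiring care is the case split $\alpha<1/2$ versus $\alpha\geq 1/2$ when checking the tail decay, together with the observation that this split does not affect the resulting range of $\beta$.
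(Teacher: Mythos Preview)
Your proposal is correct and follows exactly the paper's approach: the paper simply states that, in view of \eqref{Hseries} and \eqref{Hest}, one applies Theorem~\ref{thmgeneralvanishingmoments} with $b_0=c_0=1/2$, $b_1=c_1=\alpha+1$, and $k=2$. You have supplied precisely the verification of hypotheses (including the harmless $\alpha<1/2$ versus $\alpha\geq 1/2$ split for the tail bound) and the parameter translation that the paper leaves implicit.
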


\subsection{Sharpness}
To conclude this section, we show that in general Theorem~\ref{thmgeneralvanishingmoments} does not hold for $\beta=1/q+c_0+c_1$ (or equivalently, for $\gamma=1/p'+b_0+b_1$), although $\int_0^\infty x^{b_0+b_1}f(x)\, dx=0$.

\begin{proposition}\label{propsharp2}
	Let $0<q\leq\infty$ and $1<p<\infty$. Let the transform $F$ be as in \eqref{powertrans}, with kernel $K(x,y)$ of the form \eqref{kernelseries}, satisfying $|a_0|> 0$ and $\sum |a_m|=A<\infty$. Assume there is $C>0$ such that
	$$
	|K(x,y)|\leq \begin{cases}
	Cx^{b_1}y^{c_1}, & \text{if }xy\leq 1,\\
	Cx^{b_2}y^{c_2}, &\text{if }xy>1,
	\end{cases}
	$$
	where $b_j,c_j \in \R$, $j=1,2$. Furthermore, suppose  there exists $\nu\in \R$ and   $G_y^\nu(x)$ such that $(d/dx)G_y^\nu(x)= x^{\nu}K(x,y)$, and that there exists $C'>0$ for which
	\begin{equation}
	\label{primest}
	|G_y^\nu(x)|\leq C'x^{b}y^{c}, \qquad b,c\in \R, \qquad xy\geq 1,
	\end{equation}
	holds with $b-b_1-\nu<1$. Then, if $u\not\equiv 0$, the weighted norm inequality
	\begin{equation}
	\label{counterexvanishing2}
	\bigg(\int_0^\infty u(y)|Ff(y)|^q\, dy\bigg)^{1/q}\lesssim \bigg(\int_0^\infty x^{p(1/p'+b_0+b_1)}|f(x)|^p\, dx\bigg)^{1/p}
	\end{equation}
	cannot hold for all $f$ satisfying $\int_{0}^{\infty} x^{b_0+b_1}f(x)\, dx=0$.
\end{proposition}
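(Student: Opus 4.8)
The plan is to argue by contradiction: assume \eqref{counterexvanishing2} holds with some finite constant for every $f$ satisfying $\int_0^\infty x^{b_0+b_1}f(x)\,dx=0$, and exhibit one such $f$ whose right-hand side is finite while its left-hand side is infinite. Since the series in \eqref{kernelseries} satisfies $\sum|a_m|=A<\infty$, the kernel factors as $K(x,y)=x^{b_1}y^{c_1}\phi(xy)$ with $\phi(t)=\sum_{m\ge0}a_m t^{km}$, so that $\phi(0)=a_0\neq0$ and $|\phi(t)|\lesssim\min\{1,t^{-(b_1-b_2)}\}$ (the decay for $t>1$ coming from $|K|\lesssim x^{b_2}y^{c_2}$ together with $b_2-b_1=c_2-c_1$). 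Writing $g(x)=x^{b_0+b_1}f(x)$, the moment condition reads $\int_0^\infty g=0$, the right-hand side of \eqref{counterexvanishing2} becomes $\big(\int_0^\infty x^{p-1}|g|^p\big)^{1/p}$, and $Ff(y)=y^{c_0+c_1}\int_0^\infty g(x)\phi(xy)\,dx$.

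First I would treat the model situation in which $u$ charges a neighbourhood of the origin. I take a profile $f(x)\asymp x^{-(b_0+b_1)-1}(\log x)^{-s}$ for large $x$, with $s$ in the borderline window (e.g.\ $1<s\le 1+\tfrac1q$ when $q<\infty$), supplemented by a compactly supported piece chosen so that $\int_0^\infty x^{b_0+b_1}f=0$; this keeps $f$ locally integrable with an absolutely convergent moment, and makes the right-hand side $\int_0^\infty x^{p(1/p'+b_0+b_1)}|f|^p\asymp\int^\infty x^{-1}(\log x)^{-ps}$ finite (since $s>1/p$). For an \emph{arbitrary} $u\not\equiv0$ I would fix a point $y_*$ of positive Lebesgue density of $\{u>0\}$ and resonate the same profile against the oscillation of $\phi$ at the scale $y_*$ (no modulation is needed for $y_*=0$), so that the singular behaviour of $Ff$ isolated below is produced precisely on the support of $u$.

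The core computation is the extraction of the leading term of $Ff$. Splitting $Ff(y)=y^{c_0}\big(\int_0^{1/y}+\int_{1/y}^\infty\big)x^{b_0}f(x)K(x,y)\,dx$ and expanding $\phi$ on $(0,1/y)$, the $m=0$ term combined with $\int_0^\infty x^{b_0+b_1}f=0$ yields
$$
M(y):=-a_0\,y^{c_0+c_1}\int_{1/y}^\infty x^{b_0+b_1}f(x)\,dx\asymp y^{c_0+c_1}\,(\log(1/y))^{1-s}.
$$
The terms with $m\ge1$ carry an extra factor $(xy)^{km}$ and contribute $O\big(y^{c_0+c_1}(\log(1/y))^{-s}\big)=o(M(y))$. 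To dispose of the tail $\int_{1/y}^\infty x^{b_0}fK$ I would integrate by parts, writing $x^{b_0}f(x)K(x,y)=x^{b_0-\nu}f(x)\,(G_y^\nu)'(x)$ and invoking \eqref{primest}: the boundary term at infinity vanishes precisely because $b-b_1-\nu<1$, and both the boundary term at $1/y$ and the remaining integral are again $o(M(y))$. Hence $|Ff(y)|\asymp|M(y)|$ near the chosen scale.

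Finally, because $s$ sits at the borderline, $u$ integrates $|M|^q$ to infinity near $y_*$ (in the model case $\int_0 u(y)|M(y)|^q\,dy$ diverges like $\int_0 y^{-1}(\log(1/y))^{q(1-s)}\,dy$ for $u$ comparable to $y^{-q(c_0+c_1)-1}$), so the left-hand side of \eqref{counterexvanishing2} is infinite while the right-hand side is finite, the desired contradiction. I expect the main obstacle to be this last localization: forcing $Ff$ to blow up exactly on the support of a general nonzero $u$ requires the resonance construction at $y_*$ together with uniform control of every error term there, which is markedly more delicate than the transparent $y\to0$ model; it is exactly the integration-by-parts estimate \eqref{primest}, and in particular the strict inequality $b-b_1-\nu<1$, that guarantees the leading term $M$ is never absorbed by the tail.
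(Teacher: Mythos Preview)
The analytic ingredients you isolate --- the factorization $K(x,y)=x^{b_1}y^{c_1}\phi(xy)$, the extraction of a logarithmically large main term via the vanishing moment, and the integration-by-parts treatment of the tail $\int_{1/y}^\infty$ using $G_y^\nu$ with the strict inequality $b-b_1-\nu<1$ --- are the right ones, and they all appear in the paper's argument. The gap is in the overall strategy.

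Your plan is to produce a \emph{single} $f$ with vanishing moment, finite right-hand side, and infinite left-hand side. This can only succeed if $u$ has enough mass where $|Ff|$ is unbounded; your own model computation needs $u(y)\asymp y^{-q(c_0+c_1)-1}$ near $0$. But the hypothesis is merely $u\not\equiv 0$. If, say, $u=\chi_{[1,2]}$, then for any $f$ with the integrability implied by a finite right-hand side the transform $Ff$ is bounded on $[1,2]$, so $\int_1^2 u|Ff|^q<\infty$. The ``resonance at $y_*$'' idea does not rescue this: for fixed $y_*\in(0,\infty)$, $Ff(y_*)$ is a single finite number, and there is no mechanism by which one function can force $Ff$ to blow up on a prescribed compact interval away from the origin. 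You flag this localization as the main obstacle, but it is not a technicality --- it is the whole difficulty, and the single-function scheme cannot overcome it.

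The paper avoids this by using a one-parameter family
\[
f_N(x)=x^{-(b_0+b_1+1)}\big(\chi_{(1/N,1)}(x)-\chi_{(1,N)}(x)\big),
\]
which has vanishing moment for every $N$ and right-hand side $(2\log N)^{1/p}$. The same main-term/tail analysis you outline then gives $y^{-c_0-c_1}|Ff_N(y)|\ge |a_0|\log N-\eta(y)$ with $\eta$ independent of $N$, hence $|Ff_N(y)|\gtrsim \log N$ \emph{uniformly on any fixed interval} $(t_1,t_2)$ once $N$ is large. Since $u\not\equiv 0$ furnishes such an interval with $\int_{t_1}^{t_2}u>0$, the left-hand side is $\gtrsim (\log N)\cdot C_u$. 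The contradiction comes from comparing growth rates, $\log N$ versus $(\log N)^{1/p}$ with $p>1$, rather than from making one side infinite. This decoupling of the magnitude of $Ff_N$ from the location of the support of $u$ is exactly what a single test function cannot provide.
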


\begin{remark}
	Note that the examples we presented above (sine, Hankel, or $\mathscr{H}_\alpha$ transforms) satisfy the hypotheses of Proposition~\ref{propsharp2}. For example, in the case of the $\mathscr{H}_\alpha$ transform ($\alpha>-1/2$), we have $b_1=\alpha+1$, $b_2=\alpha-1$, and for any $\nu\geq 1/2$, $b=\alpha+\nu$ (cf. Lemma~\ref{LemmaprimitiveStruve}).
\end{remark}
\begin{proof}[Proof of Proposition~\ref{propsharp2}]
	Define, for $N\in \N$,
	$$
	f_N(x)=\frac{1}{x^{b_0+b_1+1}}\big(\chi_{(1/N,1)}(x)-\chi_{(1,N)}(x)\big).
	$$
	Then
	$$
	\int_0^\infty x^{b_0+b_1}f_N(x)\, dx= \int_{1/N}^1 \frac{1}{x}\, dx - \int_1^N \frac{1}{x}\, dx =\log N - \log N=0,
	$$
	and
	$$
	\bigg(\int_0^\infty x^{p(1/p'+b_0+b_1)}|f_N(x)|^p\, dx\bigg)^{1/p}=\bigg(\int_{1/N}^{N}\frac{1}{x}\, dx\bigg)^{1/p} = (2\log N)^{1/p}.
	$$
	Now let $y\in (0,\infty)$ and assume $N$ is such that $1/N<1/y<N$. We have
	\begin{align*}
	y^{-c_0}|Ff(y)|&=\bigg| \int_{1/N}^1 \frac{1}{x^{b_1+1}}K(x,y)\, dx-\int_1^N \frac{1}{x^{b_1+1}}K(x,y)\, dx\bigg|\\
	&\geq \bigg| \int_{1/N}^{1/y}\frac{1}{x^{b_1+1}}K(x,y)\,dx\bigg|-2\bigg|\int_{1/y}^1 \frac{1}{x^{b_1+1}}K(x,y)\,dx \bigg|-\bigg|\int_{1/y}^N \frac{1}{x^{b_1+1}}K(x,y)\, dx\bigg|\\ &=: I_1-I_2-I_3.
	\end{align*}
	Now we proceed to estimate $I_1$ from below, and $I_2,I_3$ from above. Then, joining all such estimates and combining them with the latter inequality, we can obtain a lower estimate for $y^{-c_0}|Ff(y)|$. First,
	\begin{align*}
	I_1&=\bigg| \int_{1/N}^{1/y} \frac{K(x,y)-x^{b_1}y^{c_1}a_0+x^{b_1}y^{c_1}a_0}{x^{b_1+1}}\, dx\bigg| \\ &\geq \bigg|\int_{1/N}^{1/y} \frac{x^{b_1}y^{c_1}a_0}{x^{b_1+1}}\, dx \bigg|-\bigg| \int_{1/N}^{1/y} \frac{K(x,y)-x^{b_1}y^{c_1}a_0}{x^{b_1+1}}\, dx\bigg|.
	\end{align*}
	Since
	$$
	\bigg| \int_{1/N}^{1/y}\frac{a_0 x^{b_1}y^{c_1}}{x^{b_1+1}}\, dx\bigg| = |a_0|y^{c_1}\bigg| \int_{1/N}^{1/y} \frac{1}{x}\,dx\bigg|\geq y^{c_1}|a_0|\log N-y^{c_1}|a_0\log y|,
	$$
	and
	\begin{align*}
	\bigg| \int_{1/N}^{1/y} \frac{K(x,y)-a_0x^{b_1}y^{c_1}}{x^{b_1+1}}\, dx\bigg|\leq y^{c_1}\int_{1/N}^{1/y}x^{-1}\sum_{m=1}^\infty |a_m|(xy)^{mk}\, dx \leq Ay^{c_1+k}\int_{1/N}^{1/y}x^{k-1}\, dx \leq Ay^{c_1},
	\end{align*}
	we obtain
	$$I_1\geq y^{c_1}|a_0|\log N-y^{c_1} |a_0\log y|-Ay^{c_1}=:y^{c_1}|a_0|\log N -\eta_1(y).
	$$
	
	We now proceed to estimate $I_2$ from above. Here we distinguish two cases, namely if $1/y<1$ or $1/y\geq 1$; in the following we take $j=1$ if $1/y<1$, and $j=2$ otherwise:
	\begin{align*}
	I_2=2\bigg|\int_{1/y}^1 \frac{1}{x^{b_1+1}}K(x,y)\, dx \bigg|&\leq 2C y^{c_j}\bigg|\int_{1/y}^1 x^{b_j-b_1-1}\, dx\bigg| \leq 2Cy^{c_j}\max\{1,1/y\}\max\big\{1,y^{b_1+1-b_j}\big\}\\
	&\leq 2C y^{c_j}\max\big\{1,1/y,y^{b_1-b_j},y^{b_1+1-b_j}\big\}=:\eta_2(y).
	\end{align*}
	Finally, integration by parts and estimate \eqref{primest} yield
	\begin{align*}
	I_3&=\bigg| \int_{1/y}^N \frac{1}{x^{b_1+1+\nu}}x^\nu K(x,y)\, dx\bigg| \leq N^{-b_1-1-\nu}|G_y^\nu(N)|+y^{b_1+1+\nu}|G_y^\nu(1/y)|\\
	&\phantom{=} +(b_1+\nu+1)\int_{1/y}^N \frac{1}{x^{b_1+2+\nu}} |G_y^\nu(x)|\, dx \\
	&\leq C'y^{c} N^{b-b_1-1-\nu} +C'y^{c-b+b_1+1+\nu}+C'(b_1+1+\nu)y^{c}\int_{1/y}^N x^{b-b_1-2-\nu}\, dx\\
	&\leq C'y^{c}+C' y^{c-b+b_1+1+\nu}+C'y^{c}\bigg|\frac{b_1+1+\nu}{b-b_1-2-\nu}\bigg|(1+y^{-b+b_1+1+\nu})=:\eta_3(y).
	\end{align*}
	Thus, collecting all estimates, we obtain
	$$
	y^{-c_0-c_1}|Ff(y)|\geq |a_0|\log N -y^{-c_1}(\eta_1(y)+\eta_2(y)+\eta_3(y)).
	$$
	Since $u(y)\not\equiv 0$, we can find $0<t_1<t_2<\infty$ such that $\int_{t_1}^{t_2}u(y)\, dy>0$. Choosing $N$ so large that for every $y\in (t_1,t_2)$ there holds
	\begin{align*}
	y^{-c_0-c_1}|Ff(y)|\geq |a_0|\log N-y^{-c_1}(\eta_1(y)+\eta_2(y)+\eta_3(y))>\frac{|a_0|}{2}\log N, 
	\end{align*}
	it can be deduced from inequality \eqref{counterexvanishing2} (with the usual modification if $q=\infty$) that
	\begin{align*}
	\frac{|a_0|}{2}\log N \bigg(\int_{t_1}^{t_2} y^{q(c_0+c_1)} u(y)\, dy\bigg)^{1/q}&\leq \bigg(\int_0^\infty |Ff(y)|^q u(y)\, dy\bigg)^{1/q}\\ & \lesssim \bigg(\int_0^\infty x^{p(1/p'+b_0+b_1)}|f_N(x)|^p\, dx\bigg)^{1/p}=(2\log N)^{1/p},
	\end{align*}
	which is a contradiction, since $p>1$. 
\end{proof}

			\section{Weighted norm inequalities with general monotone functions}\label{sectionGM}
			In this section we consider the so-called \textit{general monotone} functions. We say \cite{LTnachr} that a function locally of bounded variation $f:\R_+\to \C$ is general monotone (written $f\in GM$) if there exist constants $C,\lambda>1$ such that 
			\begin{equation}
			\label{GMdef}
			\int_x^{2x} |df(t)|\leq \frac{C}{x}\int_{x/\lambda}^{\lambda x}|f(t)|\, dt, \qquad x>0,
			\end{equation}
			where $\int |df(t)|$ is understood as a Stieltjes integral. For the discrete version of general monotonicity, see \cite{TikGM}. We are interested in obtaining sufficient conditions for the weighted norm inequality
			\begin{equation}
			\label{pittgm}
			\Vert Ff\Vert_{q,u}\lesssim \Vert f\Vert_{p,v}, \qquad 1<p\leq q<\infty,
			\end{equation}
			to hold for every $f\in GM$ whenever $F$ is a transform of power-type kernel (i.e., of the form \eqref{powertrans} and satisfying \eqref{kernelestcor}).  In what follows we assume the kernel $K$ is continuous in the variable $x$. Here $u,v$ are general nonnegative weights. As a particular case, we investigate whether we can relax the sufficient conditions of Corollary~\ref{corapplications} when $u,v$ are power weights under the assumption $f\in GM$. 

			Here we assume that 
			\begin{equation}
			\label{estimategm1}
			|K(x,y)|\lesssim x^{b_1}y^{c_1},  \qquad xy<1,
			\end{equation}
			with $b_1,c_1\in \R$. Let $G(x,y)$ be such that 
			\begin{equation}
			\label{primitiveG}
			\frac{d}{dx} G(x,y)=x^{b_0} K(x,y),
			\end{equation}
			where the additive constant of $G$ is taken to be zero (such $G$ exists due to the continuity of $K$ in the variable $x$). We moreover suppose that $G(x,y)$ satisfies the estimate
			\begin{equation}
			\label{estimategm2}
			|G(x,y)|\lesssim x^{b}y^{c}, \qquad xy\geq 1,
			\end{equation}
			with $b,c\in \R$. Finally, we say that $f\in GM$ is \emph{admissible} if
			$$
			 \int_0^1 x^{b_0+b_1}|f(x)|\, dx + \int_{1}^\infty x^{b-1}|f(x)|\, dx<\infty.
			$$
			\begin{remark}\label{GMproperties}
				Let us recall some useful properties of $f\in GM$. 
				\begin{enumerate}
					\item If $\sigma\geq 0$,
				$$
				\int_{y}^\infty x^{\sigma}|df(x)|\lesssim \int_{y/\lambda}^\infty x^{\sigma-1}|f(x)|\, dx,
				$$
				where $\lambda$ is the constant from \eqref{GMdef}, see \cite[p. 111]{GLTBoas}.
				\item If $\int_1^\infty |f(x)|\, dx<\infty$, then $x|f(x)|\to 0$ as $x\to \infty$ (cf. \cite{LTnachr}).
				\item The function $x^{\sigma}f(x)$ is $GM$ for every $\sigma\in \R$.
				\end{enumerate}
			\end{remark}
			\subsection{Main results}
			First we obtain straightforward upper estimates for $G$ that follow from the upper estimates for $K$. This will provide an expression for $b,c$ in \eqref{estimategm2} in the general case.
			\begin{proposition}\label{PROPgmgeneral}
				Let $K$ satisfy \eqref{estimategm1}, and assume that $|K(x,y)|\lesssim x^{b_2}y^{c_2}$ for $xy>1$. Let $G$ be given by the relation \eqref{primitiveG}. Then,
				\begin{enumerate}[label=(\roman{*})]
				\item \label{case1} If $b_0+b_1>0$ and $b_0+b_2\neq -1$, then
				$$
				|G(x,y)|\lesssim \begin{cases}
				y^{c_1}x^{b_0+b_1+1},&\text{if }xy\leq 1,\\
				y^{c_2}x^{b_0+b_2+1}+y^{c_1-b_0-b_1-1}+y^{c_2-b_0-b_2-1}, &\text{if } xy>1.
				\end{cases}
				$$
				\item \label{case2} If $b_0+b_2<-1$ and $b_0+b_1\neq -1$, then
				$$|G(x,y)|\lesssim \begin{cases}
					y^{c_1}x^{b_0+b_1+1} +y^{c_1-b_0-b_1-1}+y^{c_2-b_0-b_2-1},&\text{if }xy\leq 1,\\
					y^{c_2}x^{b_0+b_2+1}, &\text{if } xy>1.
				\end{cases}
				$$
				\end{enumerate}
			\end{proposition}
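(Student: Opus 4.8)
The plan is to represent $G$ by a single convergent integral in each case, split the integration at the point $t=1/y$ where the two regimes of the kernel estimate meet, apply \eqref{estimategm1} on the near range and the hypothesis $|K(x,y)|\lesssim x^{b_2}y^{c_2}$ on the far range, and finally evaluate the resulting power integrals. For part \ref{case1}, since $b_0+b_1>0$ the integrand $t^{b_0}K(t,y)$ is $O(t^{b_0+b_1})$ near the origin, so the antiderivative with zero additive constant is $G(x,y)=\int_0^x t^{b_0}K(t,y)\,dt$. When $xy\le 1$ every $t\in(0,x)$ satisfies $ty\le 1$, so \eqref{estimategm1} gives $|G(x,y)|\lesssim y^{c_1}\int_0^x t^{b_0+b_1}\,dt\asymp y^{c_1}x^{b_0+b_1+1}$, which is the first bound. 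When $xy>1$ I would write $G(x,y)=\int_0^{1/y}+\int_{1/y}^x$, bounding the first piece by \eqref{estimategm1} as $y^{c_1}\int_0^{1/y}t^{b_0+b_1}\,dt\asymp y^{c_1-b_0-b_1-1}$ and the second by $y^{c_2}\int_{1/y}^x t^{b_0+b_2}\,dt$.

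For part \ref{case2} the roles of $0$ and $\infty$ are reversed: since $b_0+b_2<-1$ the integrand decays fast enough at infinity, so the natural normalization is $G(x,y)=-\int_x^\infty t^{b_0}K(t,y)\,dt$ (that is, $G(\infty,y)=0$). When $xy>1$ all $t>x$ satisfy $ty>1$, whence $|G(x,y)|\lesssim y^{c_2}\int_x^\infty t^{b_0+b_2}\,dt\asymp y^{c_2}x^{b_0+b_2+1}$ using $b_0+b_2+1<0$. When $xy\le 1$ I split $-\int_x^\infty=-\int_x^{1/y}-\int_{1/y}^\infty$ and estimate the two pieces by \eqref{estimategm1} and by $|K|\lesssim x^{b_2}y^{c_2}$, obtaining $y^{c_1}\int_x^{1/y}t^{b_0+b_1}\,dt$ and $y^{c_2}\int_{1/y}^\infty t^{b_0+b_2}\,dt\asymp y^{c_2-b_0-b_2-1}$.

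The routine but essential step is evaluating the power integrals $\int t^{b_0+b_j}\,dt$, whose size is governed by the sign of the exponent $b_0+b_j+1$; this is precisely where the hypotheses $b_0+b_2\neq-1$ (part \ref{case1}) and $b_0+b_1\neq-1$ (part \ref{case2}) enter, ruling out the logarithmic borderline. In part \ref{case1}, for $\int_{1/y}^x t^{b_0+b_2}\,dt$: if $b_0+b_2+1>0$ the upper endpoint dominates and yields $y^{c_2}x^{b_0+b_2+1}$, while if $b_0+b_2+1<0$ the lower endpoint $1/y$ dominates and yields $y^{c_2-b_0-b_2-1}$; keeping both candidate terms together with the $y^{c_1-b_0-b_1-1}$ from the near piece produces the stated bound. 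Symmetrically, in part \ref{case2} the integral $\int_x^{1/y}t^{b_0+b_1}\,dt$ is controlled by $x^{b_0+b_1+1}$ when $b_0+b_1+1<0$ and by $(1/y)^{b_0+b_1+1}$ when $b_0+b_1+1>0$, giving the two terms $y^{c_1}x^{b_0+b_1+1}$ and $y^{c_1-b_0-b_1-1}$.

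The one genuine subtlety I would flag as the main obstacle is the choice of the limit of integration defining $G$: one must integrate from $0$ in part \ref{case1} and from $\infty$ in part \ref{case2}. Using the wrong endpoint would leave a nonzero $y$-dependent constant, which in particular would destroy the decay $x^{b_0+b_2+1}\to 0$ required for large $x$ in part \ref{case2}. Once the correct convergent representation of $G$ is fixed, the remaining work is exactly the elementary power-integral computations above.
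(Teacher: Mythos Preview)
Your proposal is correct and follows essentially the same approach as the paper: representing $G$ as $\int_0^x$ in case \ref{case1} and as $\pm\int_x^\infty$ in case \ref{case2}, splitting at $t=1/y$, and bounding each piece by the appropriate kernel estimate. Your added commentary on why the endpoints $0$ and $\infty$ must be chosen as they are, and on how the hypotheses $b_0+b_2\neq -1$ and $b_0+b_1\neq -1$ exclude the logarithmic case, makes explicit what the paper leaves implicit, but the argument is the same.
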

			\begin{proof}
				\ref{case1} Since $b_0+b_1>0$, we can write $G(x,y)=\int_0^x t^{b_0}K(t,y)\, dt$, by the Fundamental Theorem of Calculus. For $x\leq 1/y$, 
				$$
				|G(x,y)|\lesssim  y^{c_1}\int_0^x t^{b_0+b_1}\, dt\lesssim y^{c_1}x^{b_0+b_1+1}, 
				$$
				whilst for $x>1/y$, using the latter estimate we obtain
				$$
				|G(x,y)|\lesssim y^{c_1-b_0-b_1-1}+\int_{1/y}^x t^{b_0}|K(t,y)|\, dt\lesssim y^{c_1-b_0-b_1-1}+y^{c_2-b_0-b_2-1}+y^{c_2}x^{b_0+b_2+1}.
				$$
				\ref{case2} Since $b_0+b_2<-1$, we can write $G(x,y)=\int_x^\infty t^{b_0}K(t,y)\, dt$, again by the Fundamental Theorem of Calculus. For $x>1/y$,
				$$
				|G(x,y)|\lesssim y^{c_2}\int_x^\infty t^{b_0+b_2}\, dt \asymp y^{c_2}x^{b_0+b_2+1}.
				$$
				For $x\leq 1/y$, using the latter estimate we obtain
				$$
				|G(x,y)|\lesssim \int_{x}^{1/y} t^{b_0}|K(t,y)|\, dt + y^{c_2-b_0-b_2-1}\lesssim y^{c_1}x^{b_0+b_1+1} +y^{c_1-b_0-b_1-1}+y^{c_2-b_0-b_2-1},
				$$
				as desired.
			\end{proof}
			\begin{remark}
			Observe that the upper estimates for $|G(x,y)|$ given in Proposition~\ref{PROPgmgeneral} are rather rough, and they are not optimal for oscillating kernels $K(x,y)$, such as $K(x,y)=j_\alpha(xy)$. However, those estimates are useful for kernels satisfying
			$$
			K(x,y)\asymp \begin{cases}
			x^{b_1}y^{c_1}, &\text{if } xy\leq 1,\\
			x^{b_2}y^{c_2}, &\text{if }xy>1.
			\end{cases}
			$$
			In fact, the Struve function $\mathbf{H}_\alpha$ with $\alpha>1/2$ satisfies the above estimate, and it can be easily checked that in this case the result given by Proposition~\ref{PROPgmgeneral} coincides with that of Lemma~\ref{LemmaprimitiveStruve}. For oscillating kernels it is more convenient to obtain these estimates by using an iterated integration by parts, as done in Lemma~\ref{LemmaprimitiveStruve} for the Struve function, or in \cite{unifconvHankel} for the Bessel function (in both cases the estimates are sharp).
			\end{remark}
		
			The following lemma yields an upper pointwise estimate for $Ff$.			\begin{lemma}\label{lemmagmest}
				Let $f\in GM$ be an admissible function. Assume \eqref{estimategm1} holds, and $G(x,y)$ defined by \eqref{primitiveG} satisfies \eqref{estimategm2} with  $b\geq 0$. Then the transform
				$$
				Ff(y)=y^{c_0}\int_0^\infty x^{b_0}f(x)K(x,y)\, dx
				$$
				satisfies the pointwise estimate
				\begin{equation}
				\label{Ffgmest}
				|Ff(y)|\lesssim y^{c_0+c_1}\int_0^{1/y}|f(x)|x^{b_0+b_1}\, dx + y^{c+c_0}\int_{1/(\lambda y)}^\infty x^{b-1}|f(x)|\, dx,
				\end{equation}				
				where $\lambda$ is the constant from \eqref{GMdef}.
			\end{lemma}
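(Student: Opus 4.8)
The plan is to split the defining integral at $x=1/y$, the point where the kernel changes regime, and treat each piece with a tailored tool. Writing
$$Ff(y)=y^{c_0}\int_0^{1/y}x^{b_0}f(x)K(x,y)\,dx+y^{c_0}\int_{1/y}^\infty x^{b_0}f(x)K(x,y)\,dx=:y^{c_0}(J_1+J_2),$$
the first piece is immediate: on $(0,1/y)$ one has $xy<1$, so \eqref{estimategm1} gives $|J_1|\lesssim y^{c_1}\int_0^{1/y}x^{b_0+b_1}|f(x)|\,dx$, which after multiplication by $y^{c_0}$ is exactly the first term of \eqref{Ffgmest}. All the difficulty lies in $J_2$, where the kernel admits no useful pointwise bound and I must exploit the general monotonicity of $f$ together with the primitive $G$.

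For $J_2$ I integrate by parts against $G$, using $dG(x,y)=x^{b_0}K(x,y)\,dx$ from \eqref{primitiveG} (legitimate since $G$ is continuous and $f$, being $GM$, is locally of bounded variation):
$$J_2=\int_{1/y}^\infty f(x)\,dG(x,y)=\Big[f(x)G(x,y)\Big]_{x=1/y}^{\infty}-\int_{1/y}^\infty G(x,y)\,df(x).$$
A key preliminary fact, used repeatedly, is the decay $x^b|f(x)|\to 0$ as $x\to\infty$: indeed $x^{b-1}f(x)\in GM$ by Remark~\ref{GMproperties}(3), admissibility gives $\int_1^\infty x^{b-1}|f(x)|\,dx<\infty$, and Remark~\ref{GMproperties}(2) then yields $x\cdot x^{b-1}|f(x)|\to 0$. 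Together with \eqref{estimategm2} this makes the boundary term at infinity vanish, since $|f(x)G(x,y)|\lesssim y^cx^b|f(x)|\to 0$.

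The boundary term at $x=1/y$ is handled by evaluating \eqref{estimategm2} at $xy=1$, giving $|G(1/y,y)|\lesssim y^{c-b}$, and by the pointwise estimate $t^b|f(t)|\lesssim\int_{t/\lambda}^\infty x^{b-1}|f(x)|\,dx$. This last inequality follows by setting $g=x^bf\in GM$, using $g(x)\to 0$ to write $g(t)=-\int_t^\infty dg(x)$, and applying Remark~\ref{GMproperties}(1) with $\sigma=0$. Choosing $t=1/y$ turns the boundary contribution into $y^{c_0+c}\int_{1/(\lambda y)}^\infty x^{b-1}|f(x)|\,dx$. For the remaining Stieltjes integral I bound $|G(x,y)|\lesssim x^by^c$ on $[1/y,\infty)$ via \eqref{estimategm2} and pass to the total variation,
$$y^{c_0}\bigg|\int_{1/y}^\infty G(x,y)\,df(x)\bigg|\lesssim y^{c_0+c}\int_{1/y}^\infty x^b\,|df(x)|\lesssim y^{c_0+c}\int_{1/(\lambda y)}^\infty x^{b-1}|f(x)|\,dx,$$
the last step being Remark~\ref{GMproperties}(1) with $\sigma=b$ --- precisely where the hypothesis $b\geq 0$ enters, as property (1) requires a nonnegative exponent. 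Summing the estimates for $J_1$, the boundary term at $1/y$ and this integral gives \eqref{Ffgmest}.

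I expect the main obstacle to be the rigorous justification of the integration by parts and, above all, the vanishing of the boundary term at infinity: this is not formal, but rests on chaining admissibility through properties (2) and (3) to obtain $x^b|f(x)|\to 0$. A secondary subtlety is that the two $GM$ estimates used for $J_2$ invoke Remark~\ref{GMproperties}(1) with different exponents ($\sigma=0$ and $\sigma=b$), the latter being the reason the hypothesis $b\geq 0$ cannot be dropped.
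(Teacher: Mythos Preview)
Your proof is correct and follows essentially the same route as the paper: split at $1/y$, bound the near-origin piece by \eqref{estimategm1}, integrate by parts on $[1/y,\infty)$ against $G$, kill the boundary term at infinity via admissibility and Remark~\ref{GMproperties}(2)--(3), and control both the boundary term at $1/y$ and the Stieltjes integral using \eqref{estimategm2} together with Remark~\ref{GMproperties}(1). The only cosmetic difference is your treatment of the boundary term at $1/y$: you pass to $g=x^bf\in GM$ and apply Remark~\ref{GMproperties}(1) with $\sigma=0$, whereas the paper keeps $f$ and uses $y^{-b}\leq x^b$ on $[1/y,\infty)$ before applying Remark~\ref{GMproperties}(1) with $\sigma=b$; both yield the same bound with the same constant $\lambda$.
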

			Note that if $f\in GM$ is admissible, it follows from Lemma~\ref{lemmagmest} that $Ff(y)$ is defined in $(0,\infty)$.
			\begin{proof}[Proof of Lemma~\ref{lemmagmest}]			
				In view of \eqref{estimategm1}, we have
				$$
				|Ff(y)|\lesssim y^{c_0+c_1} \int_{0}^{1/y} x^{b_0+b_1}|f(x)|\, dx + y^{c_0}\bigg| \int_{1/y}^\infty f(x) x^{b_0} K(x,y)\, dx   \bigg| =:I_1+|I_2|.
				$$
				Partial integration on $I_2$ yields the estimate
				$$
				|I_2| \leq y^{c_0} |f(x)G(x,y)|\bigg|_{1/y}^\infty +y^{c_0} \int_{1/y}^\infty|G(x,y)\, df(x)|.
				$$
				First, since $f$ is admissible, it follows from \eqref{estimategm2} and 2. and 3. of Remark~\ref{GMproperties} that
				$$
				\lim_{x\to \infty}|f(x)G(x,y)|\lesssim y^c\lim_{x\to \infty} x^{b}|f(x)|=0.
				$$
				Secondly, since $b\geq 0$, we deduce from 1. of Remark~\ref{GMproperties} and \eqref{estimategm2} that
				$$
				y^{c_0}|f(1/y)G(1/y,y)|\lesssim y^{c+c_0-b}|f(1/y)|\leq y^{c+c_0-b}\int_{1/y}^\infty |df(x)|\lesssim y^{c+c_0}\int_{1/(\lambda y)}^\infty x^{b-1}|f(x)|\, dx.
				$$
				Finally, similarly as above,
				\begin{align*}
				y^{c_0}\int_{1/y}^\infty |G(x,y)\, df(x)|&\lesssim y^{c+c_0}\int_{1/y}^\infty x^{b}|df(x)|\lesssim y^{c+c_0}\int_{1/(\lambda y)}^\infty x^{b-1}|f(x)|\, dx,
				\end{align*} 
				and therefore \eqref{Ffgmest} is established.
				\end{proof}
				\begin{remark}\label{rmkgm}
					Note that if $b-c-1=b_0+b_1-c_1$, one may take $\lambda=1$ in \eqref{Ffgmest}, since
					$$ 
					y^{c+c_0}\int_{1/(\lambda y)}^{1/y} x^{b-1}|f(x)|\, dx \asymp  y^{c_0+c_1}\int_{1/(\lambda y)}^{1/y}x^{b_0+b_1}|f(x)|\, dx \leq y^{c_0+c_1}\int_0^{1/y} x^{b_0+b_1}|f(x)|\, dx
					$$
				\end{remark}
We are now in a position to prove sufficient conditions for the inequality \eqref{generalweightednorm} to hold.
\begin{theorem}\label{thmgeneralGM}
	Let $1<p\leq q<\infty$ and $f\in GM$ be admissible. Let $F$ be as in \eqref{powertrans}. Assume \eqref{estimategm1} holds, and $G(x,y)$ defined by \eqref{primitiveG} satisfies \eqref{estimategm2} with  $b\geq 0$. Then,  inequality \eqref{pittgm} holds provided that there exists $C>0$ such that for every $r>0$,
	\begin{align}
	\bigg(\int_0^{1/r} u(y)y^{(c_0+c_1)q}\, dy\bigg)^{1/q}\bigg(\int_0^r v(x)^{1-p'}x^{(b_0+b_1)p'}\, dx\bigg)^{1/p'}&\leq C, \label{PittGMcond1}\\
	\bigg(\int_{1/(\lambda r)}^\infty u(y) y^{(c+c_0)q}\, dy\bigg)^{1/q}\bigg(\int_{r}^\infty v(x)^{1-p'} x^{(b-1)p'}\, dx\bigg)^{1/p'}&\leq C, \label{PittGMcond2}
	\end{align}
	where $\lambda$ is the constant from \eqref{GMdef}.
\end{theorem}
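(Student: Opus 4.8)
The plan is to mimic the proof of Theorem~\ref{suffFtransforms}: first invoke the pointwise estimate of Lemma~\ref{lemmagmest}, split the resulting bound into two integral pieces, and then apply Hardy's inequality (Lemma~\ref{hardylema}) to each piece separately, checking that the two Hardy conditions produced are precisely \eqref{PittGMcond1} and \eqref{PittGMcond2}. Since $f\in GM$ is admissible and $b\geq 0$, Lemma~\ref{lemmagmest} gives
$$
|Ff(y)|\lesssim y^{c_0+c_1}\int_0^{1/y}x^{b_0+b_1}|f(x)|\, dx + y^{c+c_0}\int_{1/(\lambda y)}^\infty x^{b-1}|f(x)|\, dx,
$$
so that, $\Vert\cdot\Vert_{q,u}$ being a genuine norm for $q\geq 1$, the triangle inequality yields $\Vert Ff\Vert_{q,u}\lesssim I_1+I_2$, where $I_1$ and $I_2$ denote the $L^q_u$-norms of the two terms on the right.

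For $I_1$ I would perform the change of variables $y\mapsto 1/y$ to turn the inner integral $\int_0^{1/y}$ into one of the form $\int_0^{t}$, bringing $I_1$ into the exact shape of the left-hand side of the first Hardy inequality in Lemma~\ref{hardylema} with $g(x)=x^{b_0+b_1}|f(x)|$. To recover $\Vert f\Vert_{p,v}$ on the right one takes the weight $v(x)x^{-(b_0+b_1)p}$, and the associated Hardy condition then reads
$$
\sup_{r>0}\bigg(\int_0^{1/r}u(y)y^{(c_0+c_1)q}\, dy\bigg)^{1/q}\bigg(\int_0^r v(x)^{1-p'}x^{(b_0+b_1)p'}\, dx\bigg)^{1/p'}\leq C,
$$
which is exactly \eqref{PittGMcond1}; here one uses the elementary identity $p(1-p')=-p'$ to convert the exponent $-(b_0+b_1)p(1-p')$ into $(b_0+b_1)p'$, and the Jacobian $y^{-2}$ cancels upon substituting back in the first factor. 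This yields $I_1\lesssim \Vert f\Vert_{p,v}$.

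For $I_2$ the only wrinkle is that the inner integral starts at $1/(\lambda y)$ rather than $1/y$. I would absorb $\lambda$ by the substitution $y=1/(\lambda t)$, which sends the lower limit to $t$ and puts $I_2$ into the form of the second Hardy inequality in Lemma~\ref{hardylema} with $g(x)=x^{b-1}|f(x)|$, the powers of $\lambda$ arising being harmless constants swallowed by $\lesssim$. Tracking the Jacobian through the substitution and using again $p(1-p')=-p'$, the resulting condition is
$$
\sup_{r>0}\bigg(\int_{1/(\lambda r)}^\infty u(y)y^{(c+c_0)q}\, dy\bigg)^{1/q}\bigg(\int_r^\infty v(x)^{1-p'}x^{(b-1)p'}\, dx\bigg)^{1/p'}\leq C,
$$
namely \eqref{PittGMcond2}, giving $I_2\lesssim \Vert f\Vert_{p,v}$. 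Combining the two estimates establishes \eqref{pittgm}.

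The substantive content has in fact already been extracted into Lemma~\ref{lemmagmest}, whose proof is where general monotonicity is genuinely used: integration by parts together with property~1 of Remark~\ref{GMproperties} passes from $\int x^{b}|df|$ to $\int x^{b-1}|f|$ (this is where $b\geq 0$ enters), while properties~2 and~3 kill the boundary term at infinity. Consequently the only real care needed here is bookkeeping: matching the exponents of $x$ and $y$ after the two reciprocal changes of variables, and correctly handling the $\lambda$-shifted lower limit in the second term via the $\lambda$-scaled substitution. I do not foresee a genuine obstacle beyond this routine verification, since $1<p\leq q<\infty$ guarantees Hardy's inequality applies in both directions.
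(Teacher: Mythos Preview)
Your proposal is correct and follows essentially the same route as the paper's proof: invoke the pointwise estimate from Lemma~\ref{lemmagmest}, split into $I_1$ and $I_2$, change variables $y\mapsto 1/y$ (respectively $y\mapsto 1/(\lambda y)$) to bring each term into Hardy form, and apply Lemma~\ref{hardylema} with $g(x)=x^{b_0+b_1}|f(x)|$ and $g(x)=x^{b-1}|f(x)|$ respectively. The paper handles the bookkeeping exactly as you describe, including the $\lambda$-scaled substitution for $I_2$.
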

\begin{remark}
	Note that under certain assumptions on the parameters $c$, $b$, $c_i$, and $b_i$ ($i=0,1$), we can use the gluing lemma (Lemma~\ref{lemmaglue}) to rewrite conditions \eqref{PittGMcond1} and \eqref{PittGMcond2} as one single condition, similarly as done with Theorem~\ref{suffFtransforms} and Corollary~\ref{CORgluing}.
\end{remark}
\begin{proof}[Proof of Theorem~\ref{thmgeneralGM}]
	Using the estimate \eqref{Ffgmest}, we can write
	\begin{align*}
	\bigg(\int_0^\infty u(y)|Ff(y)|^q\, dy\bigg)^{1/y}&\lesssim \bigg(\int_0^\infty u(y)\bigg(y^{c_0+c_1}\int_0^{1/y} x^{b_0+b_1}|f(x)|\, dx\bigg)^{q}dy\bigg)^{1/q} \\
	&\phantom{=}+\bigg(\int_0^\infty u(y)\bigg(y^{c+c_0}\int_{1/(\lambda y)}^\infty x^{b-1}|f(x)|\, dx\bigg)^qdy\bigg)^{1/q}=:I_1+I_2.
	\end{align*}
	On the one hand, by Lemma~\ref{hardylema} and the change of variables $y\to 1/y$, the inequality
	\begin{align*}
	I_1&=\bigg(\int_0^\infty\frac{u(1/y)}{y^{2+(c_0+c_1)q}}\bigg(\int_{0}^y x^{b_0+b_1}|f(x)|\, dx\bigg)^q dy\bigg)^{1/q}\lesssim \bigg(\int_0^\infty v(x) |f(x)|^p\, dx\bigg)^{1/p} =\Vert f\Vert_{p,v},
	\end{align*}
	holds if
	$$
	\bigg(\int_r^\infty \frac{u(1/y)}{y^{2+(c_0+c_1) q}} \, dy\bigg)^{1/q}\bigg(\int_{0}^r v(x)^{1-p'}x^{(b_0+b_1)p'}\, dx\bigg)^{1/p'}\leq C, \qquad r>0,
	$$
	or equivalently, if \eqref{PittGMcond1} is satisfied. On the other hand, again by Lemma~\ref{hardylema} and the change of variables $y\to 1/y$, the inequality
	$$
	I_2\asymp \bigg(\int_r^\infty \frac{u\big( (\lambda y)^{-1}\big)}{y^{2+(c+c_0) q}} \bigg(\int_y^\infty x^{b-1}|f(x)|\, dx\bigg)^q dy\bigg)^{1/q} \lesssim \bigg(\int_0^\infty  v(x) |f(x)|^p\, dx \bigg)^{1/p}=\Vert f\Vert_{p,v},
	$$
	holds provided that
	$$
	\bigg(\int_{0}^r \frac{u\big( (\lambda y)^{-1}\big)}{y^{2+(c+c_0)q}}\, dy\bigg)^{1/q}\bigg(\int_r^\infty v(x)^{1-p'}x^{(b-1)p'}\, dx\bigg)^{1/p'}\leq C,\qquad r>0,
	$$
	or equivalently, if \eqref{PittGMcond2} holds.	
\end{proof}
Let us also state sufficient conditions for inequality \eqref{pittgm} whenever $u$ and $v$ are power weights.
\begin{corollary}\label{corGMpower}
	Let $1<p\leq q<\infty$ and $f\in GM$ be admissible. Let $F$ be as in \eqref{powertrans}. Assume \eqref{estimategm1} holds, and $G(x,y)$ defined by \eqref{primitiveG} satisfies \eqref{estimategm2} with  $b\geq 0$ and $c<c_1$. Then, inequality \eqref{generalweightednorm} holds with
	$$
	\beta=\gamma+c_0-b_0+c_1-b_1+\frac{1}{q}-\frac{1}{p'}, \qquad \frac{1}{q}+c_0+c<\beta<\frac{1}{q}+c_0+c_1.
	$$
\end{corollary}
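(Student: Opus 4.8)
The plan is to deduce the corollary directly from Theorem~\ref{thmgeneralGM} by specializing to the power weights $u(y)=y^{-\beta q}$ and $v(x)=x^{\gamma p}$, which turn the abstract inequality \eqref{pittgm} into the homogeneous inequality \eqref{generalweightednorm}. Everything then reduces to checking that the two Hardy-type conditions \eqref{PittGMcond1} and \eqref{PittGMcond2} hold for this choice of weights under the stated relation between $\beta$ and $\gamma$ and the range $\frac1q+c_0+c<\beta<\frac1q+c_0+c_1$.

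First I would simplify the weight factors: since $v(x)^{1-p'}=x^{\gamma p(1-p')}=x^{-\gamma p'}$, each of the two conditions becomes a product of two explicit power integrals. Condition \eqref{PittGMcond1} reads
$$
\Big(\int_0^{1/r} y^{q(c_0+c_1-\beta)}\,dy\Big)^{1/q}\Big(\int_0^{r} x^{p'(b_0+b_1-\gamma)}\,dx\Big)^{1/p'}\leq C,
$$
while condition \eqref{PittGMcond2} reads
$$
\Big(\int_{1/(\lambda r)}^\infty y^{q(c+c_0-\beta)}\,dy\Big)^{1/q}\Big(\int_{r}^\infty x^{p'(b-1-\gamma)}\,dx\Big)^{1/p'}\leq C.
$$

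Next I would read off the requirements. Convergence of the two $y$-integrals produces precisely the endpoints of the range: the first forces $\beta<\frac1q+c_0+c_1$ and the second forces $\beta>\frac1q+c_0+c$ (the hypothesis $c<c_1$ guaranteeing this interval is nonempty, and $b\geq0$ being inherited from Theorem~\ref{thmgeneralGM} via Lemma~\ref{lemmagmest}). Evaluating the integrals, each condition collapses to a single power $r^{E}$, whose supremum over $r>0$ is finite exactly when $E=0$. For \eqref{PittGMcond1} a direct computation gives $E=\beta-\gamma-(c_0-b_0+c_1-b_1)-\tfrac1q+\tfrac1{p'}$, so $E=0$ is precisely the asserted relation $\beta=\gamma+c_0-b_0+c_1-b_1+\frac1q-\frac1{p'}$.

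The point requiring care—and the main obstacle—is verifying that condition \eqref{PittGMcond2} is compatible with this same relation rather than imposing a second, conflicting one. Computing its exponent and substituting the relation above leaves $E=(c_1-b_1-b_0)-(c-b+1)$; hence both Hardy conditions are simultaneously homogeneous of degree zero exactly when $b-c-1=b_0+b_1-c_1$, which is the structural identity recorded in Remark~\ref{rmkgm} and which one checks for the concrete kernels of interest (e.g. the Struve kernel, via Lemma~\ref{LemmaprimitiveStruve}). Under this identity the constant $\lambda$ from \eqref{GMdef} only contributes a harmless multiplicative factor, so both conditions hold with a constant independent of $r$, and Theorem~\ref{thmgeneralGM} then yields \eqref{generalweightednorm} in the claimed range, completing the proof.
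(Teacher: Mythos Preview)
Your approach is exactly the one the paper indicates: specialize Theorem~\ref{thmgeneralGM} to the power weights $u(y)=y^{-\beta q}$, $v(x)=x^{\gamma p}$ and verify \eqref{PittGMcond1}--\eqref{PittGMcond2}, following the same arithmetic as in the proof of Theorem~\ref{theoremsuffpowerweights} (the paper in fact omits the proof and points to that computation).

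You have also correctly put your finger on a point the paper glosses over. For pure power weights each condition collapses to a single power of $r$, and the two resulting homogeneity relations coincide precisely when $b-c-1=b_0+b_1-c_1$; this identity is not listed among the hypotheses of the corollary, yet without it \eqref{PittGMcond1} and \eqref{PittGMcond2} cannot hold simultaneously for the relation $\beta=\gamma+c_0-b_0+c_1-b_1+1/q-1/p'$. The paper does record this identity at the start of the Examples subsection, noting that it is satisfied by every kernel treated there; the parenthetical remark that it ``can be omitted when $u,v$ are power weights'' refers only to the role the identity plays in Remark~\ref{rmkgm} (absorbing the constant $\lambda$ into the lower limit of \eqref{PittGMcond2}), not to the compatibility constraint you isolate. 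So your argument is correct, and your observation that the corollary tacitly relies on $b-c-1=b_0+b_1-c_1$ is accurate.
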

The proof of the latter is omitted, as it is essentially an application of Theorem~\ref{thmgeneralGM} with $u(y)=y^{-\beta q}$, $v(x)=x^{\gamma p}$. It then follows the same steps as the proof of Theorem~\ref{theoremsuffpowerweights}.
\begin{remark}\label{rmkGM}
	Let us compare the conditions for $\beta$ in Corollaries~\ref{corapplications} and \ref{corGMpower}. On the one hand, we observe that in both statements the condition $\beta<1/q+c_0+c_1$ is required. On the other hand, Corollary~\ref{corapplications} requires that $\beta>1/q+c_0+c_2$, whilst Corollary~\ref{corGMpower} requires $\beta>1/q+c_0+c$. Therefore, in order for Corollary~\ref{corGMpower} to yield a nontrivial result we need to assume $c<c_2$.
\end{remark}

\subsection{Examples}
Let us present sufficient conditions for inequality \eqref{generalweightednorm} to hold for the above transforms under the assumption $f\in GM$. Some of the following results are already known, some others are new. It is worth noting that in all examples we show below, the conditions on the parameters $b\geq 0$, $c<c_1$, and $b-c-1=b_0+b_1-c_1$ hold (in fact, the latter condition can be omitted when $u,v$ are power weights, see \eqref{PittGMcond2}).
	\begin{enumerate}
		\item For the Fourier transform (in this case the integration in \eqref{transform} is performed in the interval $(-\infty,\infty)$, but it can be divided into two integrals over the interval $(0,\infty)$), since $K(x,y)=e^{ixy}$ and $G(x,y)=(iy)^{-1}e^{ixy}$, we have $b=b_0=b_1=c_0=c_1=0$ and $c=-1$. Thus, for $f\in GM$, the sufficient conditions that guarantee the inequality
		$$
		\Vert y^{-\beta} \widehat{f}\Vert_q\lesssim \Vert x^\gamma f\Vert_p
		$$
		are $\beta=\gamma+1/q-1/p'$ and $-1+1/q<\beta<1/q$. For the cosine transform the situation is similar, i.e., the sufficient conditions are the same, and in both cases those are also necessary (cf. \cite{GLTBoas,LTparis}).
		\item The sine transform (for which $K(x,y)=\sin xy$ and $G(x,y)=-y^{-1}\cos xy$) satisfies $b_1=c_1=1$, $b=b_0=c_0=0$ and $c=-1$, thus, if $f\in GM$, the sufficient conditions for the inequality
		\begin{equation*}\label{sinepittGM}
		\Vert y^{-\beta}\widehat{f}_{\sin} \Vert_q\lesssim \Vert x^\gamma f\Vert_p
		\end{equation*}
		to hold are $\beta=\gamma+1/q-1/p'$ and $-1+1/q<\beta<1+1/q$. These conditions are also necessary, as shown in \cite{LTparis}. 
		\item The classical Hankel transform of order $\alpha\geq -1/2$ \eqref{defHankel}  has kernel $K(x,y)=j_\alpha(xy)$ satisfying $j_\alpha(xy)\asymp 1$ for $xy\leq 1$, and moreover
		$$
		|G(x,y)|\lesssim y^{-\alpha-3/2} x^{\alpha+1/2}, \qquad x,y\in \R_+,
		$$
		cf. \cite[Lemma 2.6]{unifconvHankel}. Thus, on applying Corollary~\ref{corGMpower} with $b_0=2\alpha+1$, $b_1=c=c_1=0$, $b=\alpha+1/2$ and $c=-\alpha-3/2$, we get that the inequality
		$$
		\Vert y^{-\beta} H_\alpha f\Vert_q\lesssim \Vert x^{\gamma}f\Vert_p
		$$
		holds with $\beta=\gamma-2\alpha-1+1/q-1/p'$ and $1/q-\alpha-3/2<\beta<1/q$. These sufficient conditions are also necessary, as proved in \cite{DCGT}. This includes the cosine transform ($\alpha=-1/2$), (see also \cite{LTparis}).
		
		\item The $\mathscr{H}_\alpha$ transform with $\alpha>-1/2$ \eqref{defHtrans} has kernel $K(x,y)=\mathbf{H}_\alpha(xy)$ satisfying $\mathbf{H}_\alpha(xy)\asymp (xy)^{\alpha+1}$ for $xy\leq 1$. By Lemma~\ref{LemmaprimitiveStruve}, we have
		$$
		|G(x,y)|\lesssim y^{\alpha-1}x^{\alpha+1/2}, \qquad xy>1.
		$$ Hence, applying Corollary~\ref{corGMpower} with $b_0=c_0=1/2$, $b_1=c_1=\alpha+1$, $b=\alpha+1/2$ and $c=\alpha-1$, we get that the inequality
		\begin{equation}
		\label{wniHtrans}
		\Vert y^{-\beta} \mathscr{H}_\alpha f \Vert_q\lesssim \Vert x^{\gamma}f\Vert_p
		\end{equation}
		holds with $\beta=\gamma+1/q-1/p'$ and $1/q+\alpha-1/2<\beta<1/q+\alpha+3/2$. Notice that for $\alpha\geq 1/2$, this yields no improvement with respect to the general case (cf. \eqref{suffHtrans}), but for $-1/2<\alpha<1/2$, the $GM$ hypothesis on $f$ allows us to drop the condition $\beta \geq \max\{1/q-1/p',0\}$.
	\end{enumerate}

To conclude, we prove that the range of $\beta$ for which \eqref{wniHtrans} holds given by Corollary~\ref{corGMpower} is sharp. 
\begin{theorem}
	Let $1<p\leq q<\infty$ and $f\in GM$ be admissible. Inequality \eqref{wniHtrans} holds if and only if
	\begin{equation}
	\label{wniHtranscond}
	\beta=\gamma+1/q-1/p', \qquad 1/q+\alpha-1/2<\beta<1/q+\alpha+3/2.
	\end{equation}
\end{theorem}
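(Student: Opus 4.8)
The plan is to prove the two implications separately and to reduce necessity to the behaviour of $\mathscr{H}_\alpha f$ in the two regimes $y\to 0$ and $y\to\infty$, always testing with functions that are manifestly general monotone. Sufficiency requires no new work: the conditions \eqref{wniHtranscond} are exactly those produced by Corollary~\ref{corGMpower} specialised to $\mathscr{H}_\alpha$ (with $b_0=c_0=1/2$, $b_1=c_1=\alpha+1$, $b=\alpha+1/2$, $c=\alpha-1$), as recorded in the fourth example above. So it remains to show that \eqref{wniHtranscond} is necessary, and throughout I assume \eqref{wniHtrans} holds for every admissible $f\in GM$ with a fixed constant.

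For the homogeneity relation $\beta=\gamma+1/q-1/p'$ together with the upper bound $\beta<1/q+\alpha+3/2$, I would mimic Theorem~\ref{ThmNecessitypowers}(i). Fix $\sigma$ large (any $\sigma>\max\{-1,-\gamma-1/p\}$ works) and test with $f_r(x)=x^{\sigma}\chi_{(0,r)}(x)$; a direct check of \eqref{GMdef}, comparing the jump at $x=r$ with the local average, shows $f_r\in GM$, and it is clearly admissible. Since $\mathbf{H}_\alpha(t)\asymp t^{\alpha+1}$ with a positive constant for $t\le 1$, and since $x<r$, $y<1/r$ force $xy<1$, there is no oscillation and
\[
\mathscr{H}_\alpha f_r(y)\asymp y^{\alpha+3/2}\int_0^r x^{\alpha+3/2+\sigma}\,dx\asymp y^{\alpha+3/2}\,r^{\alpha+5/2+\sigma},\qquad y\le 1/r.
\]
Restricting the left-hand norm of \eqref{wniHtrans} to $(0,1/r)$ gives a lower bound $\gtrsim r^{\alpha+5/2+\sigma}\big(\int_0^{1/r}y^{(\alpha+3/2-\beta)q}\,dy\big)^{1/q}$; the inner integral diverges at the origin unless $\beta<1/q+\alpha+3/2$, which yields the upper bound, while $\Vert x^\gamma f_r\Vert_p\asymp r^{\gamma+\sigma+1/p}$ stays finite. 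When the integral converges it is $\asymp r^{-(\alpha+3/2-\beta)-1/q}$, so the left side is $\gtrsim r^{1+\sigma+\beta-1/q}$; matching powers of $r$ (valid for all $r>0$) forces $\beta=\gamma+1/q-1/p'$.

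For the lower bound $\beta>1/q+\alpha-1/2$ I would instead use the single fixed function $f(x)=(1-x)_+$, which is nonincreasing (hence $GM$) and compactly supported, so that $\Vert x^\gamma f\Vert_p<\infty$ for every $\gamma>-1/p$, in particular at the endpoint where $\gamma=\alpha+1/2-1/p>-1/p$. The point is to analyse $\mathscr{H}_\alpha f(y)$ as $y\to\infty$. Substituting $t=xy$ reduces the transform to $y^{-1}\int_0^y t^{1/2}\mathbf{H}_\alpha(t)\,dt-y^{-2}\int_0^y t^{3/2}\mathbf{H}_\alpha(t)\,dt$, and inserting \eqref{eqEstimateStruveLarge} separates the non-oscillating power term $c_\alpha t^{\alpha-1}$ (with $c_\alpha>0$) from the oscillating term $t^{-1/2}\sin(\cdots)$. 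The power part integrates to a positive multiple of $y^{\alpha-1/2}$, while the oscillating part and the lower-order corrections contribute only $O(y^{-1})$; since $\alpha-1/2>-1$ for every $\alpha>-1/2$, the former dominates and $\mathscr{H}_\alpha f(y)\asymp y^{\alpha-1/2}$. Consequently, if $\beta\le 1/q+\alpha-1/2$ then $\int_1^\infty y^{-\beta q}|\mathscr{H}_\alpha f(y)|^q\,dy\asymp\int_1^\infty y^{(\alpha-1/2-\beta)q}\,dy=\infty$, contradicting the finiteness of the right-hand side; hence $\beta>1/q+\alpha-1/2$.

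The main obstacle is precisely this last step: controlling the oscillation of the Struve kernel at infinity. For $-1/2<\alpha\le 1/2$ the oscillating term $t^{-1/2}\sin(\cdots)$ of $\mathbf{H}_\alpha$ is of the same size as, or larger than, the power term $t^{\alpha-1}$, so the modulus bound \eqref{Hest} alone is useless; it is essential to use the full expansion \eqref{eqEstimateStruveLarge} (equivalently, the cancellation already exploited in Lemma~\ref{LemmaprimitiveStruve}) to certify both that the oscillating contribution is genuinely of order $y^{-1}$ and that the coefficient of $y^{\alpha-1/2}$ is nonzero. Choosing the compactly supported monotone profile $f=(1-x)_+$ and probing $y\to\infty$ rather than $y\to 0$ is what keeps that oscillatory integral at order $y^{-1}$ and simultaneously sidesteps the borderline integrability issues the right-hand side would otherwise develop at the endpoint $\beta=1/q+\alpha-1/2$.
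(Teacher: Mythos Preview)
Your sufficiency argument matches the paper's. For necessity your route is valid but differs from the paper's, and it carries a small gap.

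The paper uses a \emph{single} test family $f_r(x)=x^{\alpha+1/2}\chi_{(0,r)}(x)$ together with the exact identity $\mathscr{H}_\alpha f_r(y)=r^{\alpha+1}y^{-1/2}\mathbf{H}_{\alpha+1}(ry)$ (a tables formula, equivalently a consequence of \eqref{derivativesYH}). The index shift $\alpha\mapsto\alpha+1$ is the whole trick: since $\alpha+1>1/2$ for every $\alpha>-1/2$, Remark~\ref{remarkHbigalpha} yields the two-sided estimate $\mathbf{H}_{\alpha+1}(t)\asymp\min\{t^{\alpha+2},t^\alpha\}$, so both endpoints of the $\beta$-interval and the homogeneity relation drop out of the convergence of one weighted integral, with no oscillation to control. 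Your approach---two test functions, one per regime, with a direct asymptotic analysis via \eqref{eqEstimateStruveLarge} at large $y$---also works, but is more laborious precisely because it must separate the power and oscillating parts of $\mathbf{H}_\alpha$ by hand. In that analysis note that \emph{both} $y^{-1}\int_0^y t^{1/2}\mathbf{H}_\alpha(t)\,dt$ and $y^{-2}\int_0^y t^{3/2}\mathbf{H}_\alpha(t)\,dt$ contribute at order $y^{\alpha-1/2}$ from the power term $c_\alpha t^{\alpha-1}$; the combined coefficient is $c_\alpha\big((\alpha+\tfrac12)^{-1}-(\alpha+\tfrac32)^{-1}\big)>0$, so there is no cancellation, but this should be said explicitly.

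The gap: your test function $f=(1-x)_+$ has $\Vert x^\gamma f\Vert_p<\infty$ only when $\gamma>-1/p$, i.e.\ $\beta>1/q-1$. Since $1/q-1<1/q+\alpha-1/2$, values $\beta\le 1/q-1$ still need to be excluded, but for those your right-hand side is infinite and no contradiction arises. The fix is immediate: replace $(1-x)_+$ by $x^M\chi_{(0,1)}$ with $M$ large (clearly $GM$ and admissible); the same computation gives $\mathscr{H}_\alpha f(y)\asymp y^{\alpha-1/2}$ as $y\to\infty$ with a positive constant, while $\Vert x^\gamma f\Vert_p<\infty$ for every $\gamma$ once $M>-\gamma-1/p$.
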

\begin{proof}
We only need to prove that if \eqref{wniHtrans} holds for every admissible $f\in GM$, then \eqref{wniHtranscond} holds. For $\alpha>-1/2$ and $r>0$, consider the function $f_r(x)=x^{\alpha+1/2}\chi_{(0,r)}(x)$. Note that $f\in GM$ and it is admissible. By \cite[\S 11.2 (2)]{EMOTtables}, one has
$$
\mathscr{H}_\alpha f_r(y)=r^{\alpha+1}y^{-1/2}\mathbf{H}_{\alpha+1}(ry).
$$
On the one hand
$$
\Vert x^\gamma f_r\Vert_p=\bigg(\int_0^r x^{p(\gamma+\alpha+1/2)}\, dx\bigg)^{1/p} \asymp r^{\gamma+\alpha+1/2+1/p}, 
$$
provided that $\gamma+\alpha+1/2>-1/p$. On the other hand, 
$$
\Vert y^{-\beta}\mathscr{H}_\alpha f_r \Vert_q =r^{\alpha+1}\bigg( \int_0^\infty y^{-q(\beta+1/2)} |\mathbf{H}_{\alpha+1}(ry)|^q\, dy\bigg)^{1/q}.
$$
Since $\mathbf{H}_{\alpha+1}(ry)\asymp (ry)^{\alpha+2}$ whenever $ry\leq 1$, the latter integral is convergent near the origin if and only if $\beta<1/q+\alpha+3/2$, whereas since $\mathbf{H}_{\alpha+1}(ry)\asymp (ry)^\alpha$ whenever $ry$ is large enough (cf. Remark~\ref{remarkHbigalpha}), the integral converges near infinity if and only if $\beta>1/q+\alpha-1/2$. In order to conclude the proof, we note that
$$
\Vert y^{-\beta}\mathscr{H}_\alpha f_r\Vert_q\geq r^{2\alpha+3}\bigg(\int_0^{1/r}y^{q(-\beta+\alpha+3/2)}\, dy \bigg)^{1/q} \asymp r^{\alpha+3/2+\beta-1/q}.
$$
Combining the latter with inequality \eqref{wniHtrans} and the equivalence $\Vert x^\gamma f\Vert_p \asymp r^{\gamma+\alpha+1/2+1/p}$, we get that $r^{\alpha+3/2+\beta-1/q}\lesssim r^{\gamma+\alpha+1/2+1/p}$ for every $r>0$, i.e., $\beta=\gamma+1/q-1/p'$.
\end{proof}

\noindent \textbf{Acknowledgements}. The author acknowledges the support of Fundaci\'o Ferran Sunyer i Balaguer from Institut d'Estudis Catalans during the carrying out of this work.

Alberto Debernardi,\\
 Centre de Recerca Matem\`{a}tica and Universitat Aut\`onoma de Barce\-lona, \\ Campus de Bellaterra, Edifici C 08193 Bellaterra (Barcelona).\\
E-mail: \texttt{adebernardipinos@gmail.com}

\end{document}